\definecolor{darkred}{rgb}{0.4,0.1,0.1}
\definecolor{darkblue}{rgb}{0.1,0.1,0.4}
\definecolor{darkgrey}{rgb}{0.5,0.5,0.5}
\def\arr{\rightarrow}
\def\tt{\theta}
\def\aa{\alpha}
\def\lm{\lambda}
\def\S{\Sigma}
\def\s{\sigma}
\def\sd{\sigma_{\rm d}}
\def\sess{\sigma_{\rm ess}}
\def\sfh{\mathsf{h}}
\def\dd{{\mathrm{d}}}
\newcounter{counter_a}
\newenvironment{myenum}{\begin{list}{{\rm(\roman{counter_a})}}%
{\usecounter{counter_a}
\setlength{\itemsep}{1.ex}\setlength{\topsep}{0.8ex}
\setlength{\leftmargin}{5ex}\setlength{\labelwidth}{5ex}}}{\end{list}}
\newcommand{\cf}{{cf.}\,}       
\numberwithin{figure}{section}
\numberwithin{equation}{section}
\theoremstyle{plain}
\newtheorem*{thm*}{Theorem}
\newtheorem{thm}{Theorem}[section]
\newtheorem{lem}[thm]{Lemma}
\newtheorem{prop}[thm]{Proposition}
\newtheorem{example}[thm]{Example}
\theoremstyle{remark}
\theoremstyle{plain}
\newtheorem{hypothesis}[thm]{Hypothesis}
\newcommand{\rmd}{\mathrm{d}}
\newcommand{\supp}{\mathrm{supp}\,}
\newcommand{\beu}{\begin{equation*}}
\newcommand{\eeu}{\end{equation*}}
\newcommand{\besu}{\begin{equation*}
\begin{aligned}}
\newcommand{\eesu}{\end{aligned}
\end{equation*}}
\newcommand{\bes}{\begin{equation}
\begin{aligned}}
\newcommand{\ees}{\end{aligned}
\end{equation}}
\newcommand\cL{\mathcal L}
\newcommand\fra{\mathfrak a}
\newcommand\eps{\varepsilon}
\newcommand\ov{\overline}
\newcommand\wt{\widetilde}
\newcommand\wh{\widehat}
\newcommand\sign{{\rm sign\,}}
\newcommand\void[1]{}
\def\ov{\overline}
\def\eps{\varepsilon}
\def\ran{{\rm ran\,}}
      \def\dC{{\mathbb C}}
   \def\dN{{\mathbb N}}   
      \def\dR{{\mathbb R}}
      \def\cC{{\mathcal C}}
      \def\cI{{\mathcal I}}
      \def\cL{{\mathcal L}}
\newcommand{\dom}{\mathrm{dom}\,}
\renewcommand{\hyp}[1]{$C^{2}$-hypersurface as in Definition~\ref{definition_hypersurface}}
\newtheorem{definition}[thm]{Definition}
\newcommand{\skipex}{\setlength{\parskip}{0.8ex}}
\newcommand{\rank}{\mathrm{rank}\,}
\newcommand{\spann}{\mathrm{span}\,}
\begin{document}
\title[]{Approximation of Schr\"{o}dinger operators with \boldmath{$\delta$}-interactions supported on hypersurfaces}

\author{Jussi Behrndt}
\address{
Institut f\"{u}r Numerische Mathematik\\
Graz University of Technology\\
Steyrergasse 30\\
8010 Graz\\ Austria}
\email{behrndt@tugraz.at}

\author{Pavel Exner}
\address{Doppler Institute for Mathematical Physics and Applied Mathematics\\ 
Czech Technical University in Prague,
B\v{r}ehov\'{a} 7, 11519 Prague \\ and  Nuclear Physics Institute, Czech Academy of Sciences,
25068 \v{R}e\v{z} near Prague, Czechia}
\email{exner@ujf.cas.cz} 

\author{Markus Holzmann}
\address{Institut f\"{u}r Numerische Mathematik\\
Graz University of Technology\\
Steyrergasse 30\\
8010 Graz\\ Austria}
\email{holzmann@math.tugraz.at}

\author{Vladimir Lotoreichik}
\address{Department of Theoretical Physics,
Nuclear Physics Institute, Czech Academy of Sciences, 250 68, 
\v{R}e\v{z} near Prague, Czechia}
\email{lotoreichik@ujf.cas.cz}

\begin{abstract}
We show that a Schr\"odinger operator $A_{\delta, \alpha}$ with a $\delta$-interaction of strength $\alpha$ supported on a bounded or unbounded
$C^2$-hypersurface $\Sigma \subset \mathbb{R}^d$, $d\ge 2$, 
can be approximated in the norm resolvent sense 
by a family of Hamiltonians with suitably scaled regular potentials. The  differential operator $A_{\delta, \alpha}$ with a singular interaction
is regarded as a self-adjoint realization of the formal differential expression $-\Delta - \alpha \langle \delta_{\Sigma}, \cdot \rangle \delta_{\Sigma}$, 
where $\alpha\colon\Sigma\rightarrow \dR$ is an arbitrary bounded measurable function. We discuss also some spectral consequences of this approximation result.
\end{abstract}

\keywords{Schr\"odinger operators, $\delta$-interactions supported on hypersurfaces, approximation by scaled regular potentials, norm resolvent convergence, spectral convergence}

\subjclass[2010]{Primary 81Q10; Secondary 35J10}

\maketitle

\section{Introduction}

Singular Schr\"odinger operators with `potentials' supported by subsets of the configuration space of a lower dimension are often used as models 
of physical systems because they are easier to solve, the original differential equation being reduced to the analysis of an algebraic or functional 
problem. The best known about them are solvable models with point interactions used in physics since the 1930s (see \cite{de_kronig_penney_1931}), 
the rigorous analysis of which started from the seminal paper \cite{BF61}; 
for a survey see the monograph \cite{AGHH}. In the last two decades the attention focused on interactions 
supported on curves, surfaces, and more complicated sets composed of them, which are used to model `leaky' quantum systems in which the particle 
is confined to such manifolds or complexes, but the tunnelling between different parts of the interaction support is not neglected; for a review see \cite{E08} or 
\cite[Chapter 10]{EK15}.

While these models are useful and mathematically accessible, one has to keep in mind that the singular interaction represents an idealized 
form of the actual, more realistic description. This naturally inspires the question about approximations of such singular potentials by 
regular ones. In the simplest case of a point interaction this problem was already addressed in the 1930s in
\cite{thomas_1935}. Starting from the 1970s the approximation of Hamiltonians with point interactions supported on a finite or an infinite 
set of points in~$\mathbb{R}^d$, $d \in \{ 1, 2, 3 \}$, was treated systematically; cf. the monograph \cite{AGHH} and the references therein.

Apart from that, the literature on the approximation of Schr\"odinger operators with $\delta$-potentials supported on curves in $\mathbb{R}^2$
and surfaces in $\mathbb{R}^3$ is less complete; there are results available for the special cases that~$\Sigma$ is a sphere in 
$\dR^3$ \cite{antoine_gesztesy_shabani_1987, S92}, that~$\Sigma$
is the boundary of a star-shaped domain in the plane~\cite{P95},
and that~$\Sigma$ is a smooth planar curve or surface and the interaction strength is constant \cite{EI01, EK03}.
In all of the above mentioned works convergence in the norm resolvent sense is shown. Abstract approaches developed in~\cite{AK99, stollmann_voigt1996}
cover more cases but imply only strong resolvent convergence in this context.
We point out that the usage of scaled regular potentials
is not the unique way of an approximation of $\delta$-interactions
supported on hypersurfaces, other mechanisms of approximation
are discussed in e.g. \cite{BFT98, BO07, EN03, O06}.
It is also worth mentioning that the approximation of $\delta$-interactions
supported on special periodic structures in~$\dR^2$ 
has important applications in the mathematical theory of 
photonic crystals, see~\cite{FK98} and the references therein. 

The aim of the present paper is to analyze the general case 
where the interaction support is a $C^2$-smooth hypersurface $\Sigma \subset \dR^d$, $d\ge 2$, which is not necessarily bounded or closed, and the interaction 
strength is an arbitrary real valued bounded measurable function $\alpha$ 
on $\Sigma$. 
Following the approach of \cite{EI01, EK03} we show that the corresponding singular Schr\"odinger operator 
can be approximated in the norm resolvent sense by a family of regular ones with potentials suitably scaled in the direction perpendicular 
to $\Sigma$. 
We pay particular attention to the order of convergence and provide all preparatory technical integral estimates in a complete and 
self-contained form.  We shall also mention some spectral consequences of the general approximation result.

In the following we describe our main result.
Let $d \geq 2$ and let $\Sigma \subset \mathbb{R}^d$ be a bounded or unbounded orientable $C^2$-hypersurface as in Definition~\ref{definition_hypersurface},
and consider the symmetric sesquilinear form 
\begin{equation*}
  \mathfrak{a}_{\delta, \alpha}[f, g] = 
    \big( \nabla f, \nabla g \big)_{L^2(\mathbb{R}^d; \mathbb{C}^d)}
    - \int_{\Sigma}\alpha\, f|_\Sigma \,\overline{g|_\Sigma} \, \mathrm{d} \sigma,\qquad \dom \mathfrak{a}_{\delta, \alpha} = H^1(\mathbb{R}^d),
\end{equation*}
where $\alpha \in L^\infty(\Sigma)$ is a real valued function and $f|_\Sigma$, $g|_\Sigma$ denote the traces of functions $f,g \in H^1(\mathbb{R}^d)$ on~$\Sigma$.
Standard arguments yield that $\mathfrak{a}_{\delta, \alpha}$ is a densely defined, closed, and 
semibounded form in $L^2(\dR^d)$, and hence there exists a unique 
self-adjoint operator $A_{\delta, \alpha}$ in $L^2(\dR^d)$ such that
\begin{equation}\label{ada}
 (A_{\delta, \alpha}f,g)=\mathfrak{a}_{\delta, \alpha}[f, g],\qquad f\in\dom A_{\delta,\alpha},\,g\in\dom \mathfrak{a}_{\delta, \alpha},
\end{equation}
see Lemma~\ref{lemma_delta_form} for more details.
The operator $A_{\delta, \alpha}$ is regarded as a  Schr\"odinger operator 
    with a $\delta$-interaction of strength $\alpha$ supported on $\Sigma$ which corresponds to the formal singular differential expression 
    $-\Delta - \alpha \langle \delta_{\Sigma}, \cdot \rangle \delta_{\Sigma}$; cf. \cite{brasche_exner_kuperin_seba_1994} and \cite[Theorem 3.3]{BEL14_JPA}.
The choice of the negative potential sign is motivated
by the fact that interesting spectral features are in
this context usually associated with attractive
interactions.

Let $\nu$ be the continuous unit normal vector field on $\Sigma$, choose $\beta>0$ sufficiently small as in Hypothesis~\ref{hypothesis_hypersurface}
and consider layer neighborhoods $\Omega_\varepsilon$ of $\Sigma$ of the form
\begin{equation*}
\Omega_\varepsilon := \bigl\{x_\Sigma + t \nu(x_\Sigma):x_\Sigma\in\Sigma,\, t\in(-\varepsilon,\varepsilon) \bigr\},\qquad 0<\varepsilon\leq\beta.
\end{equation*}
Fix a real valued potential $V \in L^{\infty}(\mathbb{R}^d)$
with support in $\Omega_\beta$,  
define the scaled potentials $V_{\varepsilon} \in L^{\infty}(\mathbb{R}^d)$ with support in $\Omega_\varepsilon$ by
\begin{equation}\label{hohoho}
  V_{\varepsilon}(x) := 
  \begin{cases} \frac{\beta}{\varepsilon} V\big( x_{\Sigma} + 
  \frac{\beta}{\varepsilon} t \nu(x_{\Sigma}) \big), 
  &\text{if } x = x_{\Sigma} + t \nu(x_{\Sigma}) \in \Omega_{\varepsilon},
  \\ 0, &\text{else,} \end{cases}
\end{equation}
and consider the corresponding self-adjoint Schr\"odinger operators
\begin{equation*} 
  H_\varepsilon f = -\Delta f - V_\varepsilon f, 
  \qquad 
  \dom H_\varepsilon = H^2(\dR^d).
\end{equation*}

With these preparatory considerations we can formulate the main result of the
present paper.

\begin{thm} \label{theorem_convergence}
  Let $\Sigma \subset \mathbb{R}^d$, $d\geq 2$, be an orientable $C^2$-hypersurface as in Definition~\ref{definition_hypersurface}
  which satisfies Hypothesis~\ref{hypothesis_hypersurface}, let $Q \in L^{\infty}(\mathbb{R}^d)$ be real valued, and 
  let $V \in L^{\infty}(\mathbb{R}^d)$ be real valued with support in $\Omega_\beta$.
  Define $\alpha \in L^{\infty}(\Sigma)$ as the transversally averaged value of $t \mapsto V(x_\Sigma + t \nu(x_\Sigma))$ by
  \begin{equation*}
  \alpha(x_{\Sigma}) := 
  \int_{-\beta}^\beta V(x_{\Sigma} + s \nu(x_{\Sigma})) \mathrm{d} s
  \end{equation*}
  for a.e. $x_{\Sigma} \in \Sigma$ and let $A_{\delta, \alpha}$ be the corresponding 
  Schr\"{o}dinger operator with a $\delta$-interaction of strength $\alpha$ supported on $\Sigma$.
  Then there exists a $\lambda_0 < 0$ such that
  $(-\infty, \lambda_0) \subset 
  \rho(A_{\delta, \alpha} + Q) \cap \rho(H_\varepsilon + Q)$ for all $\varepsilon>0$ sufficiently small and
  for every $\lambda \in (-\infty, \lambda_0)$ there is a constant 
  $c = c(d, \lambda, \Sigma, V,Q) > 0$ such that 
  \begin{equation*}
    \left\| (H_\varepsilon +Q - \lambda)^{-1} - 
    (A_{\delta, \alpha} +Q - \lambda)^{-1} \right\| 
    \leq c\, \varepsilon \big( 1 + |\ln \varepsilon| \big)
  \end{equation*}
  holds for all $\varepsilon > 0$ sufficiently small.
  In particular, $H_\varepsilon+Q$ converges to 
  $A_{\delta, \alpha}+Q$ in the norm resolvent sense, as 
  $\varepsilon \rightarrow 0+$.
\end{thm}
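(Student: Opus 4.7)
The plan follows the strategy of \cite{EI01, EK03}: rewrite both resolvents via a Birman--Schwinger type factorization around $R_0(\lambda):=(-\Delta+Q-\lambda)^{-1}$ and control the resulting operator differences by sharp integral estimates in tubular coordinates near $\Sigma$. Factorize $V_\varepsilon = u_\varepsilon v_\varepsilon$ with $u_\varepsilon := \sign(V_\varepsilon)|V_\varepsilon|^{1/2}$, $v_\varepsilon := |V_\varepsilon|^{1/2}$, and analogously $\alpha = ab$ on $\Sigma$, and let $\tau\colon H^1(\dR^d)\to L^2(\Sigma)$ denote the Dirichlet trace. Since $\|R_0(\lambda)\|\to 0$ as $\lambda\to -\infty$, one can choose $\lambda_0<0$ so that for every $\lambda<\lambda_0$ the Birman--Schwinger kernels $v_\varepsilon R_0(\lambda)u_\varepsilon$ on $L^2(\dR^d)$ and $b\,\tau R_0(\lambda)\tau^* a$ on $L^2(\Sigma)$ both have operator norm at most $\tfrac12$, uniformly for small $\varepsilon$. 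A Konno--Kuroda and a Krein-type resolvent formula (cf.\ \cite{BEL14_JPA}) then yield
\begin{equation*}
(H_\varepsilon+Q-\lambda)^{-1}-R_0(\lambda) \;=\; R_0(\lambda)\,u_\varepsilon\bigl(I-v_\varepsilon R_0(\lambda)u_\varepsilon\bigr)^{-1} v_\varepsilon R_0(\lambda),
\end{equation*}
\begin{equation*}
(A_{\delta,\alpha}+Q-\lambda)^{-1}-R_0(\lambda) \;=\; R_0(\lambda)\,\tau^* a\bigl(I-b\,\tau R_0(\lambda)\tau^* a\bigr)^{-1} b\,\tau R_0(\lambda).
\end{equation*}

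The main quantitative step is to compare the two factorizations through a natural transversal averaging operator. Introduce the tubular diffeomorphism $\Phi(x_\Sigma,t):=x_\Sigma+t\nu(x_\Sigma)$ from $\Sigma\times(-\beta,\beta)$ onto $\Omega_\beta$, whose Jacobian $J(x_\Sigma,t)=1+O(t)$ is uniformly controlled by the $C^2$-regularity of $\Sigma$ from Hypothesis~\ref{hypothesis_hypersurface}. The transverse rescaling $s=\beta t/\varepsilon$ sends the layer $\Omega_\varepsilon$ onto the fixed layer $\Omega_\beta$ and maps $V_\varepsilon$ back to $V$ up to the Jacobian factor $\varepsilon/\beta$; crucially, by the very definition of $V_\varepsilon$ in \eqref{hohoho} the transversal average on $\Omega_\varepsilon$ of $V_\varepsilon$ equals $\alpha$ for every $\varepsilon$. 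Combined with pointwise bounds on the integral kernel $G_\lambda$ of $R_0(\lambda)$ (continuous off the diagonal with the standard singularity $|x-y|^{2-d}$ for $d\ge 3$ and $|\ln|x-y||$ for $d=2$) and a first-order Taylor expansion of $t\mapsto G_\lambda(x_\Sigma+t\nu(x_\Sigma),\,\cdot\,)$ in the transverse variable, this delivers the core operator-norm estimate
\begin{equation*}
\bigl\| v_\varepsilon R_0(\lambda) u_\varepsilon - \cP_\varepsilon^*\,b\,\tau R_0(\lambda)\tau^* a\,\cP_\varepsilon\bigr\| \;=\; O\bigl(\varepsilon\,(1+|\ln\varepsilon|)\bigr),
\end{equation*}
together with analogous estimates linking $R_0(\lambda)u_\varepsilon$ to $R_0(\lambda)\tau^* a\,\cP_\varepsilon$ and $v_\varepsilon R_0(\lambda)$ to $\cP_\varepsilon^*\,b\,\tau R_0(\lambda)$, where $\cP_\varepsilon$ is a rescaled averaging operator from $L^2(\Omega_\varepsilon)$ to $L^2(\Sigma)$ built from $\Phi$ and the transverse scale $\varepsilon$.

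Once these kernel estimates are in hand, the algebraic identity $A^{-1}-B^{-1}=A^{-1}(B-A)B^{-1}$, applied to $A:=I-v_\varepsilon R_0(\lambda)u_\varepsilon$ and $B$ the $\cP_\varepsilon$-transferred version of $I-b\tau R_0(\lambda)\tau^* a$, together with the uniform bounds $\|A^{-1}\|,\|B^{-1}\|\le 2$, produces the claimed rate $O(\varepsilon(1+|\ln\varepsilon|))$ for the full resolvent difference, with constants depending only on $d$, $\lambda$, $\Sigma$, $V$ and $Q$. The main obstacle is the core estimate displayed above: one has to extract the sharp rate from integrals of the singular kernel $G_\lambda$ over layers of shrinking width $\varepsilon$. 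The factor $\varepsilon$ is produced by the transverse rescaling together with the Taylor remainder, while the logarithmic correction arises from the on-diagonal singularity of $G_\lambda$ when integrated transversally over a layer of width $\varepsilon$ -- already at the level of $G_\lambda$ itself in dimension two, and after a more delicate near-diagonal bookkeeping of $|x-y|^{2-d}$ against the transverse scale in higher dimensions. The $C^2$-regularity of $\Sigma$ enters precisely to make the Jacobian $J$, the inverse diffeomorphism $\Phi^{-1}$, and the Taylor remainder uniformly controlled on the entire tube, uniformly also in the unbounded case.
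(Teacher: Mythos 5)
Your high-level strategy (Birman--Schwinger/Krein resolvent identities, explicit kernel estimates, Taylor expansion in the transverse variable producing $\varepsilon(1+|\ln\varepsilon|)$) matches the paper's. But there are two genuine gaps.

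First, and most seriously, you absorb $Q$ into the base resolvent $R_0(\lambda)=(-\Delta+Q-\lambda)^{-1}$. All of your kernel estimates then presuppose that $R_0(\lambda)$ has the same explicit structure as the free Green's function $G_\lambda$ -- a convolution kernel with known Bessel-function asymptotics near and far from the diagonal. This is false for a general $Q\in L^\infty(\dR^d)$: there is no explicit Green's function, and the pointwise singularity and decay bounds you invoke are not available with the uniformity you need. Every Schur-test estimate in the argument (the uniform bound on $v_\varepsilon R_0 u_\varepsilon$, the $\varepsilon(1+|\ln\varepsilon|)$ rate, the Krein formula for $A_{\delta,\alpha}+Q$) depends on these explicit asymptotics. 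The paper circumvents this entirely by proving the theorem with $Q=0$ first, using the explicit $G_\lambda$ from \eqref{def_G_lambda}, and then handling general $Q$ in a final short step via the algebraic identity
\begin{equation*}
(H_\varepsilon+Q-\lambda)^{-1}-(A_{\delta,\alpha}+Q-\lambda)^{-1}
= \bigl(1-(H_\varepsilon+Q-\lambda)^{-1}Q\bigr)
\bigl[(H_\varepsilon-\lambda)^{-1}-(A_{\delta,\alpha}-\lambda)^{-1}\bigr]
\bigl(1-Q(A_{\delta,\alpha}+Q-\lambda)^{-1}\bigr),
\end{equation*}
whose outer factors are bounded uniformly in $\varepsilon$. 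You need something like this; pushing $Q$ into $R_0$ does not work as stated.

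Second, your claim that ``since $\|R_0(\lambda)\|\to0$ as $\lambda\to-\infty$, one can choose $\lambda_0$ so that $\|v_\varepsilon R_0(\lambda)u_\varepsilon\|\le\tfrac12$ uniformly in $\varepsilon$'' does not follow from the operator-norm bound alone: $\|u_\varepsilon\|,\|v_\varepsilon\|\sim\varepsilon^{-1/2}$, so the naive factorization $\|v_\varepsilon\|\,\|R_0(\lambda)\|\,\|u_\varepsilon\|$ diverges. The paper gets the $\varepsilon$-uniform bound from a Schur test with the explicit kernel $G_\lambda$, using that $\int_{\Omega_\varepsilon}|G_\lambda(x-y)|\,\rmd y\le M\varepsilon$ uniformly (Proposition~\ref{proposition2}~(ii)), which exactly cancels the $\varepsilon^{-1}$ from $|V_\varepsilon|$. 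Finally, your $\cP_\varepsilon$-sandwich comparison on the moving space $L^2(\Omega_\varepsilon)$ is workable in principle, but it is markedly less transparent than the paper's choice to pull everything back by $S_\varepsilon\cI_\varepsilon$ onto the fixed reference space $L^2(\Sigma\times(-1,1))$ and compare $B_\varepsilon(\lambda)$ directly with $B_0(\lambda)=\wh U M(\lambda)\wh V$; the algebraic identity $\wh V(1-\wh U M\wh V)^{-1}\wh U=(1-\alpha M)^{-1}\alpha$ with $\wh V\wh U=\alpha$ then recovers the Krein formula cleanly. In your formulation the way $\cP_\varepsilon$ is supposed to encode the transverse profile of $V$ (not merely its average $\alpha$) in the intermediate Birman--Schwinger operator is left unexplained and would need to be spelled out.
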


Let us briefly describe the structure of the proof of Theorem~\ref{theorem_convergence} and the contents of this paper. 
Section~\ref{s2} contains preliminary material, definitions and properties of the hypersurfaces $\Sigma$ and their layer neighborhoods, as well as
a representation of the resolvent of the Schr\"{o}dinger operator $A_{\delta,\alpha}$ which goes back 
to~\cite{brasche_exner_kuperin_seba_1994}.
The heart of the proof of Theorem~\ref{theorem_convergence} is in Section~\ref{section_approximition}. The main part of this section deals with
the special case $Q = 0$. For this purpose, the potentials $V_\varepsilon$ in \eqref{hohoho} 
are factorized with the standard Birman-Schwinger method and useful representations of the resolvent of $H_\varepsilon$ are provided. 
The convergence analysis
is then done by comparing the different resolvent representations from Theorem~\ref{theorem_delta_op} and Proposition~\ref{theorem_resolvent_formula}, and essentially reduces
to convergence properties of certain integral operators discussed in Lemma~\ref{proposition_convergence}. However, the proof of
Lemma~\ref{proposition_convergence} requires various refined technical estimates for integrals containing the Green's function 
for the free Laplacian which are outsourced to Appendix~\ref{appa}.
We wish to mention that in Appendix~\ref{appa} particular attention is paid to keep the present paper self-contained. Therefore all necessary estimates are
presented in full detail and complete rigorous form; as a result Appendix~\ref{appa} is of mainly technical nature.
The statement of Theorem \ref{theorem_convergence} in the general case with $Q \neq 0$ follows then from the previous considerations
by a simple perturbation argument.
Eventually, there is a short Appendix~\ref{appendix_boundary}
in which it is shown that boundaries of bounded $C^2$-domains satisfy the assumptions imposed on the hypersurfaces $\Sigma$ in this paper.

Finally, we agree that
throughout the paper $c,C,C_k,\widetilde C_k$, $k \in \mathbb{N}$, denote constants that do not depend
on space variables and on $\varepsilon$. In the formulation of the results we usually write $C = C(\dots)$ 
to emphasize on which parameters these constants depend, but in the proofs we will mostly omit this.

{\bf Acknowledgements.} 
Jussi Behrndt and Vladimir Lotoreichik gratefully
acknowledge financial support
by the Austrian Science Fund (FWF): Project P~25162-N26.
Pavel Exner and Vladimir Lotoreichik also acknowledge financial support
by the Czech Science Foundation: Project 14-06818S.
Markus Holzmann thanks the Czech Centre for International Cooperation in Education (DZS) 
and the Austrian Agency for International Cooperation in Education and Research (OeAD)
for financial support under a scholarship of the program ``Aktion Austria - Czech Republic''
during a research stay in Prague.

\section{Preliminaries}\label{s2}

This section contains some preliminary material that will be 
useful in the main part of the paper. 
In Section~\ref{section_hypersurfaces_basics}
we recall certain basic facts from differential geometry
of hypersurfaces in Euclidean spaces 
and of layers built around these hypersurfaces.
Then, in Section \ref{section_delta_op}
we define Schr\"odinger operators with 
$\delta$-interactions supported on hypersurfaces
in a mathematically rigorous way.

\subsection{Hypersurfaces and their layer neighborhoods} \label{section_hypersurfaces_basics}

In this section we introduce several notions associated to 
hypersurfaces and layers around these hypersurfaces.
We follow the presentation from \cite{kuehnel_english}, 
which we adopt for our applications.
We start with a suitable definition of a class of hypersurfaces
in the Euclidean space $\dR^d$. We wish to emphasize that 
the hypersurfaces considered here are in general unbounded
and not necessarily closed; note also that the index set $I$ in the parametrization below is assumed to be finite.

\begin{definition} \label{definition_hypersurface}
We call $\Sigma \subset \dR^d$, $d \ge 2$, a $C^2$-hypersurface 
and $\{ \varphi_i, U_i, V_i \}_{i \in I}$ a parametrization of $\Sigma$, 
if $I$ is a finite index set and the following holds:
\begin{itemize}\skipex
	\item [\rm (a)] 
	$U_i \subset \dR^{d-1}$ 
	and $V_i \subset \dR^d$ 
	are open sets and  
	$\varphi_i\colon U_i \rightarrow V_i$ 
	is a $C^2$-mapping for all $i \in I$;
	\item[\rm (b)]
	 $\rank D \varphi_i(u) = d - 1$ 
	 for all $u \in U_i$ and $i\in I$;
    \item[\rm (c)] 
    $\varphi_i(U_i) = V_i \cap \Sigma$ 
    and $\varphi_i\colon U_i \rightarrow V_i \cap \Sigma$ 
    is a homeomorphism;
    \item[\rm (d)] $\Sigma \subset \bigcup_{i \in I} V_i$;
    \item[\rm (e)] 
    there exists a constant $C > 0$ such that
    \[
    |\varphi_i(u) - \varphi_i(v)| \le C |u - v|
    \] 
    for all $u,v \in U_i$ and $i\in I$.
    \end{itemize}
\end{definition}

Let $\Sigma \subset \dR^d$ be a $C^2$-hypersurface with 
parametrization $\{ \varphi_i, U_i, V_i \}_{i \in I}$. 
Then the inverse mappings $\varphi_i^{-1}: V_i\cap\Sigma \rightarrow U_i$ are often called charts 
and the family $\{ \varphi_i^{-1}, V_i\cap\Sigma, U_i \}_{i \in I}$ atlas of $\Sigma$.
For $x = \varphi_i(u)\in\Sigma$ ($u\in U_i$, $i\in I$) 
we denote the tangent hyperplane by 
\begin{equation*}
T_x := 
\spann 
\bigl\{ 
\partial_1 \varphi_i(u), \dots, \partial_{d-1} \varphi_i(u)    \bigr\}.
\end{equation*}
The tangent hyperplane $T_x$ is independent of the parametrization of $\Sigma$
and $\dim T_x = d-1$ holds by Definition~\ref{definition_hypersurface}~(b). Subsequently, it is assumed that 
$\Sigma$ is orientable, i.e. there exists a globally continuous unit normal vector field on $\Sigma$.
From now on we fix a continuous unit normal vector field (which is unique up to multiplication with $-1$) and denote it by 
$\nu(x)$ for $x\in\Sigma$. 
Then the mapping $U_i \ni u \mapsto \nu(\varphi_i(u))$ 
is continuously differentiable for all $i \in I$
and $\partial_j \nu(\varphi_i(u)) \in T_{\varphi_i(u)}$ 
for all $u \in U_i$ and $j \in \{1, \dots, d-1 \}$, 
see, e.g., \cite[Lemma 3.9 and Section 3F]{kuehnel_english}.

The first fundamental form $I_x$ 
associated to $\Sigma$ is the bilinear 
form on the tangent hyperplane $T_x$ defined by
\begin{equation*}
  I_x[a, b] := \langle a, b \rangle,\qquad   a, b \in T_x, 
\end{equation*}
where $\langle\cdot,\cdot\rangle$ denotes the standard scalar product in $\dR^d$.
For $x = \varphi_i(u)\in\Sigma$ ($u\in U_i$, $i\in I$) 
the matrix representing $I_x$ in the canonical basis 
$\{(\partial_j \varphi_i)(u)\}_{j=1}^{d-1}$ 
of $T_x$ is given by
\begin{equation}\label{Gi}
G_i(u) =\big(\big\langle (\partial_k \varphi_i)(u), 
    	(\partial_l \varphi_i)(u)\big\rangle \big)_{k, l = 1}^{d-1}
\end{equation} 
and also known as the metric tensor of $\Sigma$.
Observe that 
$G_i(u) = (D \varphi_i(u))^{\top} \cdot (D \varphi_i(u))$. Together with condition (b) in 
Definition~\ref{definition_hypersurface} this implies that
$G_i(u)$ is positive definite.

Finally, we introduce the notion of 
the Weingarten map or shape operator. 

\begin{definition} \label{definition_Weingarten_map}
Let $\Sigma \subset \dR^d$, $d\geq 2$, be an orientable $C^2$-hypersurface with parametrization $\{ \varphi_i, U_i, V_i \}_{i \in I}$
and let $\nu(x)$, $x\in\Sigma$, be a continuous unit normal vector field on $\Sigma$.  
For $x  = \varphi_i(u) \in \Sigma$ ($u \in U_i$, $i\in I$)
the Weingarten map  
$W(x)\colon T_x \rightarrow T_x$ 
is the linear operator acting on the basis vectors 
$\{\partial_j \varphi_i(u)\}_{j=1}^{d-1}$ of $T_x$ 
as 
\[
W(x) \partial_j \varphi_i(u) := -\partial_j \nu(\varphi_i(u)).
\]
\end{definition}

The Weingarten map $W(x)$ 
is well-defined (but its sign depends on the choice of the continuous unit normal vector field),
independent of the parametrization and symmetric with respect 
to the inner product induced by the first fundamental
form, see e.g. \cite[Lemma 3.9]{kuehnel_english} for the case $d=3$. 
For $x = \varphi_i(u)\in\Sigma$ ($u\in U_i$, $i\in I$)
the matrix associated to the linear mapping $W(x)$ 
corresponding to the canonical basis  
$\{\partial_j \varphi_i(u)\}_{j=1}^{d-1}$ of $T_x$ 
will be denoted by $L_i(u)$.

The eigenvalues $\{\mu_j(u)\}_{j=1}^{d-1}$
of $L_i(u)$ are the principal curvatures of $\Sigma$ and do not depend on the choice of the parametrization, 
see \cite[Definition 3.46]{kuehnel_english}. 
In particular, the quantity $\det(1 - t L_i(u))$ for $t \in \dR$,
which will appear later frequently, 
is independent of the parametrization 
and will also be denoted by
$\det(1 - t W(x))$. Furthermore, the eigenvalues of $W(x)$ depend continuously on
$x \in \Sigma$, as the entries of $L_i$ depend continuously on $u \in U_i$ (see the text after Definition~3.10 in \cite{kuehnel_english})
and $\varphi_i: U_i \rightarrow \Sigma \cap V_i$ is a homeomorphism.

Next, we discuss a convenient definition
of an integral for functions defined on the $C^2$-hypersurface $\Sigma$. For this fix a parametrization $\{\varphi_i, U_i, V_i \}_{i \in I}$ 
of $\Sigma$ as in Definition~\ref{definition_hypersurface} with a finite index set $I$ and choose a partition of unity subordinate to $\{ V_i \}_{i \in I}$, that is a family of 
functions $\chi_i: \mathbb{R}^d \rightarrow [0, 1]$, $i\in I$, with the following properties:
\begin{itemize}\skipex
 \item[\rm (i)] $\chi_i\in C^\infty(\mathbb{R}^d)$ for all $i \in I$;
 \item[\rm (ii)] $\supp \chi_i \subset V_i$
 for all $i \in I$;
 \item[\rm (iii)] 
   $\sum_{i \in I} \chi_i(x) = 1$ for any $x \in \Sigma$.
\end{itemize}
Note that some of the functions $\chi_i$, $i\in I$, are not compactly supported, if $\Sigma$ is unbounded.

A function $f\colon \Sigma \rightarrow \dC$ 
is said to be measurable (integrable), if 
\[
U_i \ni u \mapsto \chi_i(\varphi_i(u)) f(\varphi_i(u))
\]
is measurable (integrable, respectively) for all $i \in I$.
If $f\colon \Sigma \rightarrow \dC$ 
is integrable, we define the integral of $f$ over $\Sigma$ as
\begin{equation}
\label{integral}
\int_\Sigma f(x) \rmd \sigma(x) 
:= 
\sum_{i \in I}\int_{U_i} 
\chi_i(\varphi_i(u)) f(\varphi_i(u)) \sqrt{\det G_i(u)} \rmd u,
\end{equation}
where $\rmd u := \rmd \Lambda_{d-1}(u)$
denotes the usual $(d-1)$-dimensional Lebesgue measure
on $U_i$ and $G_i(u)$ is the matrix 
of the first fundamental form given in~\eqref{Gi}.
The measure $\sigma$ in \eqref{integral}
coincides with the canonical Hausdorff measure 
on $\Sigma$ which is independent of the parametrization of $\Sigma$; 
cf. \cite[Appendix~C.8]{leoni}.
Therefore, the above  definition of the integral does not depend
on the parametrization of $\Sigma$ and the choice of
the partition of unity. We denote the space of (equivalence classes of) square integrable 
functions $f:\Sigma\rightarrow\dC$ with respect to $\sigma$ by $L^2(\Sigma)$.

Next, we introduce layer neighborhoods of a $C^2$-hypersurface $\Sigma$ and we impose some additional 
conditions on $\Sigma$ in Hypothesis~\ref{hypothesis_hypersurface} below.
For this it is useful to define 
the functions
\begin{equation}
\label{iotavarphi}
\iota_{\varphi_i}\colon U_i \times \dR  \rightarrow \dR^d, 
    \qquad \iota_{\varphi_i}(u, t) :=  \varphi_i(u) + t \nu(\varphi_i(u)),\qquad i\in I.
\end{equation}
The Jacobian matrix of $\iota_{\varphi_i}$, $i\in I$, 
is given by the $d\times d$ matrix
\begin{equation*}
(D \iota_{\varphi_i})(u, t) 
= 
\begin{pmatrix}
(D \varphi_i)(u)
(1 - t L_i(u)) & \nu(\varphi_i(u)) 
\end{pmatrix}
\end{equation*}
and the absolute value of the determinant of this matrix
can be expressed as
\begin{equation}\label{DetDiota}   
\big|\det \big((D \iota_{\varphi_i})(u, t)\big)\big|
=  \big| \det(1 - t L_i(u)) \big| \sqrt{\det G_i(u)};
\end{equation}
cf. \cite[Section 2]{lin_lu07} and \cite[Section 3]{duclos_exner_krejcirik_2001}.
We will also make use of the mapping
\begin{equation}
\label{iota}
\iota_\Sigma \colon \Sigma\times \dR\rightarrow\dR^d,
\qquad 
\iota_\Sigma(x_\Sigma,t) := x_\Sigma + t \nu(x_\Sigma),
\end{equation}		
and layer neighborhoods
$\Omega_\beta$ of $\Sigma$ of the form
\begin{equation} 
\label{def_tube}
\Omega_\beta := \iota_\Sigma(\Sigma\times (-\beta,\beta)),\qquad \beta > 0.
\end{equation}

We employ the following hypothesis for the hypersurface $\Sigma$.

\begin{hypothesis} \label{hypothesis_hypersurface}
Let $\Sigma \subset \dR^d$ be an orientable $C^2$-hypersurface
with parametrization $\{\varphi_i, U_i, V_i\}_{i\in I}$. 
Assume that there exists $\beta > 0$ such that
  \begin{itemize}\skipex
    \item[\rm (a)] 
    the restriction of the mapping $\iota_\Sigma$ on $\Sigma\times (-\beta,\beta)$
    is injective;
   \item[\rm (b)] 
   	there is a constant $\eta\in (0,1)$ such that
    $\det(1-tW(x_\Sigma)) \in (1-\eta,1+\eta)$ for all $x_\Sigma \in \Sigma$
    and all $t\in (-\beta,\beta)$;
   \item[\rm (c)] 
    there exists a constant $c > 0$  
    such that the mappings
    $\iota_{\varphi_i}$ in \eqref{iotavarphi}
    satisfy
    \[
     \big|\iota_{\varphi_i}(u,t) - \iota_{\varphi_i}(v,s)\big|^2 
      \ge c^2\left( |u - v|^2 + |s -  t|^2 \right)
    \]
    for all $u, v \in U_i$, $s, t \in (-\beta, \beta)$ and all $i\in I$.
  \end{itemize}
\end{hypothesis}

All the assumptions of Hypothesis~\ref{hypothesis_hypersurface} are
satisfied for the boundary of a compact and
simply connected $C^2$-domain; see Appendix \ref{appendix_boundary}.
We also mention that a similar set of assumptions was imposed in \cite[Section~4]{brasche_exner_kuperin_seba_1994}.
In the next proposition it will be shown that item (c) in Hypothesis \ref{hypothesis_hypersurface} implies that
the eigenvalues of $W$ are uniformly bounded on $\Sigma$. 
In particular, this shows that (b) in Hypothesis~\ref{hypothesis_hypersurface}
is automatically satisfied if $\beta > 0$ is small enough.

\begin{prop} \label{proposition_eigenvalues_Weingarten_map}
Let $\Sigma \subset \dR^d$ be an orientable $C^2$-hypersurface and assume that item (c) in Hypothesis~\ref{hypothesis_hypersurface}
holds. Then the eigenvalues of
the matrix of the Weingarten map are uniformly bounded on $\Sigma$.
\end{prop}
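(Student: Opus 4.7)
The plan is to extract an infinitesimal version of the Lipschitz lower bound in Hypothesis~\ref{hypothesis_hypersurface}(c) and then apply it to an eigenvector of the Weingarten map at a value of the normal parameter $t$ that would force a contradiction if the corresponding principal curvature were too large in absolute value.

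First, I would fix $i \in I$, $u \in U_i$, and $t \in (-\beta,\beta)$, take $s = t$ and $v = u + h\xi$ in the inequality from item~(c) with $\xi \in \dR^{d-1}$ a unit vector and $h > 0$, divide by $h^2$, and pass to the limit $h \to 0+$. Since $\varphi_i \in C^2$ and hence $\nu \circ \varphi_i \in C^1$ on $U_i$, a first-order Taylor expansion combined with the identity
\[
  D(\nu \circ \varphi_i)(u)\,\xi \;=\; -\,W(\varphi_i(u))\,D\varphi_i(u)\,\xi,
\]
which is just the coordinate form of Definition~\ref{definition_Weingarten_map}, together with the formula for $D\iota_{\varphi_i}$ displayed just before \eqref{DetDiota}, yields
\[
  \bigl|\,D\varphi_i(u)\bigl(I - t L_i(u)\bigr)\xi\,\bigr|^2 \;\geq\; c^2\,|\xi|^2
\]
for every $\xi \in \dR^{d-1}$ and every $t \in (-\beta,\beta)$.

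Second, let $\mu$ be an eigenvalue of $W(\varphi_i(u))$, equivalently an eigenvalue of $L_i(u)$, with some eigenvector $\hat\xi \neq 0$. Suppose for contradiction that $|\mu| > 1/\beta$. Then $t_\mu := 1/\mu$ lies in $(-\beta,\beta)$ and
\[
  \bigl(I - t_\mu L_i(u)\bigr)\hat\xi \;=\; (1 - t_\mu\mu)\hat\xi \;=\; 0,
\]
so the left-hand side of the infinitesimal inequality vanishes at $t = t_\mu$ while the right-hand side is strictly positive. This contradiction forces $|\mu| \leq 1/\beta$ for every principal curvature.

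The bound is obtained uniformly in $u$ and $i$, and since the atlas from Definition~\ref{definition_hypersurface} is finite and covers $\Sigma$, the estimate $|\mu| \leq 1/\beta$ propagates to all eigenvalues of $W(x_\Sigma)$ for every $x_\Sigma \in \Sigma$. The only genuinely technical step is the passage from the discrete Lipschitz lower bound of item~(c) to its infinitesimal counterpart; this is routine thanks to the $C^2$-regularity of $\Sigma$, and the rest of the argument is essentially a one-line contradiction based on the algebraic observation that $(I - tL_i(u))$ becomes singular exactly at $t = 1/\mu$.
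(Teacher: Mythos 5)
Your proof is correct, and it takes a genuinely different route from the paper's. The paper argues by contradiction through the determinant of the Jacobian: assuming an eigenvalue $\mu$ of $L_i(u)$ with $|\mu|>\beta^{-1}$, it picks real $s_n\to\mu^{-1}$ with $s_n^{-1}\notin\sigma(L_i(u))$, observes $\det(1-s_nL_i(u))\to 0$, invokes the uniform boundedness and positive definiteness of $G_i$ from Definition~\ref{definition_hypersurface}\,(e) and identity \eqref{DetDiota} to conclude $\det D\iota_{\varphi_i}(u,s_n)\to 0$, and then uses the inverse function theorem to deduce $\det D\iota_{\varphi_i}^{-1}(\iota_{\varphi_i}(u,s_n))\to\infty$, contradicting the global Lipschitz continuity of $\iota_{\varphi_i}^{-1}$ that Hypothesis~\ref{hypothesis_hypersurface}\,(c) provides. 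You instead differentiate the lower Lipschitz estimate directly along a coordinate direction, obtaining the pointwise bound $|D\varphi_i(u)(1-tL_i(u))\xi|\ge c|\xi|$ for all $t\in(-\beta,\beta)$, and then feed in an eigenvector $\hat\xi$ of $L_i(u)$ at $t=1/\mu$, where the left-hand side vanishes. This bypasses the determinant entirely, and with it both the uniform control on $\det G_i$ and the inverse-function-theorem step; it also yields the sharp explicit bound $|\mu|\le 1/\beta$ for every principal curvature, rather than merely a contradiction. One point you leave implicit is that the eigenvalues are real (so that $1/\mu\in(-\beta,\beta)$ and $\hat\xi$ can be taken real); this is guaranteed by the self-adjointness of $W$ with respect to the first fundamental form, which the paper records just after Definition~\ref{definition_Weingarten_map}, and in fairness the paper's own proof relies on the same fact to make $s_n\to\mu^{-1}$ a sequence of reals.
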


\begin{proof}
Let $\beta>0$ be as in Hypothesis \ref{hypothesis_hypersurface} (c) and suppose
that the eigenvalues of the Weingarten map are not uniformly bounded.
Then for some $i\in I$ there exists $u \in U_i$ and an eigenvalue $\mu$ of $L_i(u)$ such that $\vert\mu\vert> \beta^{-1}$.
Choose a sequence $(s_n) \subset (-\beta, \beta)$
such that $s_n^{-1}$ are not
eigenvalues of $L_i(u)$ and $s_n \rightarrow \mu^{-1}$. Then
\begin{equation*}
 \det(1 - s_n L_i(u))\neq 0\quad\text{and}\quad \det(1 - s_n L_i(u))\rightarrow 0
\end{equation*}
and as $G_i(u)$ is positive definite and has uniformly bounded values by Definition \ref{definition_hypersurface} (e),
the same holds for $\det(1 - s_n L_i(u))\sqrt{\det G_i(u)}$, that is,
\begin{equation*}
 \det D \iota_{\varphi_i}(u, s_n)\neq 0\quad\text{and}\quad \det D \iota_{\varphi_i}(u, s_n)\rightarrow 0;
\end{equation*}
cf. \eqref{DetDiota}.
From $D \iota_{\varphi_i}^{-1} (\iota_{\varphi_i}(u,s_n))=(D \iota_{\varphi_i}(u, s_n))^{-1}$ we conclude
 \begin{equation}\label{oho}
    \det D \iota_{\varphi_i}^{-1} (\iota_{\varphi_i}(u,s_n))
      =  \frac{1}{\det D \iota_{\varphi_i}(u, s_n)}  \rightarrow \infty.
  \end{equation}
On the other hand, by Hypothesis \ref{hypothesis_hypersurface} (c) the mapping $\iota_{\varphi_i}^{-1}$ is
Lipschitz continuous on $\iota_{\varphi_i}(U_i \times (-\beta, \beta))$ and hence $\| D \iota_{\varphi_i}^{-1} \|$ is bounded;
this contradicts \eqref{oho}.
\end{proof} 

In the next example we provide a $C^2$-hypersurface which does not satisfy Hypothesis \ref{hypothesis_hypersurface};
here a curve in~$\dR^2$ with unbounded curvature at ``infinity'' is discussed.

\begin{example}
  \rm Consider the curve 
  \begin{equation*}
    \varphi\colon \dR \rightarrow \dR^2, 
    \quad 
    u \mapsto \begin{pmatrix} u \\ \int_0^u \sin(t^2) \rmd t 
    \end{pmatrix},
  \end{equation*}
  and observe that $\varphi(\mathbb{R})$ 
  is an orientable $C^2$-hypersurface in $\dR^2$ with parametrization $\{ \varphi, \dR, \dR^2 \}$.
  If we fix the unit normal vector field by 
  \begin{equation*}
  \nu(u)=\frac{1}{(1 + \sin^2(u^2))^{1/2}}\begin{pmatrix} -\sin (u^2) \\ 1\end{pmatrix},
  \end{equation*}
  then the corresponding  $1 \times 1$-matrix of the Weingarten map 
  is given by
  \begin{equation*}
    L(u) = \frac{2 u \cos(u^2)}{(1 + \sin^2(u^2))^{3/2}}.
  \end{equation*}
  Clearly, $L$ is unbounded and hence, item {\rm (b)} 
  in Hypothesis~\ref{hypothesis_hypersurface} is not satisfied.
\end{example}

Under Hypothesis~\ref{hypothesis_hypersurface}, 
the mapping $\iota_\Sigma$ in~\eqref{iota}
is bijective from $\Sigma\times (-\beta,\beta)$
onto $\Omega_\beta$. 
This allows us to identify functions $f$ supported 
on $\Omega_\beta$ with functions $\wt f$ defined on 
$\Sigma \times (-\beta, \beta)$ 
via the natural identification
\begin{equation*}
 f(x) = f(\iota_\Sigma(x_\Sigma, t)) = \wt f(x_\Sigma, t),\quad x
 = \iota_\Sigma(x_\Sigma, t),
\quad x_\Sigma\in \Sigma,\,t\in(-\beta,\beta).
\end{equation*}
Subsequently, $L^1(\Omega_\beta)$ is equipped with the $d$-dimensional Lebesgue measure $\Lambda_d$ 
and $L^1(\Sigma\times (-\beta,\beta))$ is equipped with the measure $\sigma \times \Lambda_1$. 
In the next proposition it is shown that $L^1(\Omega_\beta)$ and $L^1(\Sigma\times (-\beta,\beta))$ can be identified and a 
useful change of variables formula is provided.

\begin{prop} \label{corollary_integration_tube}
Let $\Sigma \subset \dR^d$ be an orientable $C^2$-hypersurface, assume that Hypothesis~\ref{hypothesis_hypersurface} is satisfied and
 let $\Omega_\beta$ be as in~\eqref{def_tube}. 
Then the following assertions are true.
\begin{itemize}\skipex
\item [\rm (i)] 
Let $\iota_\beta := \iota_\Sigma|_{\Sigma \times (-\beta, \beta)}$.
Then, there exist constants $0 < c_1\le c_2 < +\infty$
such that 
\[
c_1\|f\|_{L^1(\Omega_\beta)}
\le 
\| f \circ \iota_\beta \|_{L^1(\Sigma\times (-\beta,\beta))}
\le
c_2\|f\|_{L^1(\Omega_\beta)},\qquad f\in L^1(\Omega_\beta).
\]
In particular, 
$f \in L^1(\Omega_\beta)$ 
if and only if
$f \circ \iota_\beta \in L^1(\Sigma\times (-\beta,\beta))$.

\item [\rm (ii)] 
For $f\in L^1(\Omega_\beta)$ 
the identity
\begin{equation*}
    \int_{\Omega_\beta} f(x) \rmd x 
    =\int_{\Sigma} \int_{-\beta}^{\beta} 
    f(x_\Sigma + t \nu(x_\Sigma)) \det(1 - t W({x_\Sigma})) \rmd t \rmd \sigma(x_\Sigma) 
\end{equation*}
holds, where $W$ is the Weingarten map associated to $\Sigma$.
\end{itemize}
\end{prop}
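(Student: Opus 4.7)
My plan is to reduce the global statement to a standard $C^1$-change of variables on each chart $U_i\times(-\beta,\beta)$ via the map $\iota_{\varphi_i}$ from \eqref{iotavarphi}, glued together by a partition of unity subordinate to $\{V_i\}_{i\in I}$, and then to derive (i) as an immediate consequence of (ii) combined with the two-sided bound on $\det(1-tW)$ from Hypothesis~\ref{hypothesis_hypersurface}(b).

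I would begin with (ii). First I would verify that on each chart the map $\iota_{\varphi_i}\colon U_i\times(-\beta,\beta)\to\mathbb{R}^d$ satisfies the hypotheses of the classical change of variables formula: it is $C^1$ because $\varphi_i$ is $C^2$ and the normal $\nu\circ\varphi_i$ is $C^1$; it is injective on $U_i\times(-\beta,\beta)$ since Hypothesis~\ref{hypothesis_hypersurface}(a) forces $\iota_\Sigma$ to be injective on $\Sigma\times(-\beta,\beta)$ and $\varphi_i$ is a homeomorphism by Definition~\ref{definition_hypersurface}(c); and its Jacobian determinant has constant sign on $U_i\times(-\beta,\beta)$ by Hypothesis~\ref{hypothesis_hypersurface}(b) together with \eqref{DetDiota}, so in particular it never vanishes. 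Hypothesis~\ref{hypothesis_hypersurface}(c) additionally gives a quantitative bi-Lipschitz estimate, which ensures measurability of the image sets and allows us to invoke the classical change of variables on each chart.

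Next, I would fix a partition of unity $\{\chi_i\}_{i\in I}$ subordinate to $\{V_i\}_{i\in I}$ as in Section~\ref{section_hypersurfaces_basics}, and, given $f\in L^1(\Omega_\beta)$, decompose the integrand by writing $f(x) = \sum_i \chi_i(x_\Sigma)f(x)$ for $x=\iota_\Sigma(x_\Sigma,t)\in\Omega_\beta$, where $(x_\Sigma,t)$ is the unique preimage under $\iota_\Sigma$. Each summand is supported in $\iota_{\varphi_i}(U_i\times(-\beta,\beta))$, so the classical change of variables gives
\begin{equation*}
\int_{\Omega_\beta}\chi_i(x_\Sigma) f(x)\,\mathrm{d}x
=\int_{U_i}\int_{-\beta}^{\beta}\chi_i(\varphi_i(u))\,f(\iota_{\varphi_i}(u,t))\,
\bigl|\det D\iota_{\varphi_i}(u,t)\bigr|\,\mathrm{d}t\,\mathrm{d}u.
\end{equation*}
Substituting \eqref{DetDiota} and using Hypothesis~\ref{hypothesis_hypersurface}(b) to drop the absolute value, the inner Jacobian becomes $\det(1-tL_i(u))\sqrt{\det G_i(u)}$. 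Summing over $i\in I$ and recognizing the resulting expression as the definition \eqref{integral} of the surface integral applied to the function $x_\Sigma\mapsto\int_{-\beta}^{\beta}f(\iota_\Sigma(x_\Sigma,t))\det(1-tW(x_\Sigma))\,\mathrm{d}t$ (whose integrability follows from Fubini once finiteness is established on each chart) yields the claimed formula in (ii).

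Part (i) is then immediate: Hypothesis~\ref{hypothesis_hypersurface}(b) provides $1-\eta<\det(1-tW(x_\Sigma))<1+\eta$ uniformly on $\Sigma\times(-\beta,\beta)$, and applying (ii) to $|f|$ sandwiches $\|f\|_{L^1(\Omega_\beta)}$ between $(1-\eta)$ and $(1+\eta)$ times $\|f\circ\iota_\beta\|_{L^1(\Sigma\times(-\beta,\beta))}$, so one may take $c_1=(1+\eta)^{-1}$ and $c_2=(1-\eta)^{-1}$. The main conceptual obstacle is the global gluing step, since some $\chi_i$ need not have compact support and the surface may be unbounded; but because the index set $I$ is finite and each chart-wise identity is an honest $L^1$-identity of nonnegative integrands when $f\ge 0$, no interchange issues arise after first reducing to $f\ge 0$ via the standard decomposition of real and imaginary parts into positive and negative pieces.
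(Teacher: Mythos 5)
Your proposal is correct and follows essentially the same route as the paper: a partition of unity $\{\chi_i\}$ subordinate to $\{V_i\}$, a chart-wise change of variables via $\iota_{\varphi_i}$ using the Jacobian identity \eqref{DetDiota} (with Hypothesis~\ref{hypothesis_hypersurface}(b) removing the absolute value), reassembly via the definition \eqref{integral} of the surface integral, and then (i) as a corollary of (ii) and the two-sided bound on $\det(1-tW)$. The only difference is that you spell out the verification of the change-of-variables hypotheses ($C^1$-smoothness, injectivity, non-vanishing Jacobian, measurability via the bi-Lipschitz bound from Hypothesis~\ref{hypothesis_hypersurface}(c)) which the paper leaves implicit.
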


\begin{proof}
Let $\{ \varphi_i, U_i, V_i \}_{i \in I}$ 
be a parametrization of $\Sigma$ with a finite index set $I$,
let $\{ \chi_i \}_{i \in I}$ be a partition of unity 
subordinate to the covering $\{ V_i \}_{i \in I}$ and set
\[
\wt\chi_i(x) := \chi_i(x_\Sigma),\qquad i\in I,
\]
for $x = \iota_\beta(x_\Sigma, t) = x_\Sigma+ t\nu(x_\Sigma)\in \Omega_\beta$ with $x_\Sigma\in \Sigma$
and $t\in (-\beta,\beta)$.
The family $\{\wt\chi_i\}_{i\in I}$ satisfies 
\begin{equation}
\label{property_wt_chi}
\sum_{i\in I}\wt\chi_i(x) =1\quad\text{for all}~x\in \Omega_\beta
\end{equation}
due to the properties of the partition of unity 
$\{ \chi_i \}_{i \in I}$. 

Let 
$f\in L^1(\Omega_{\beta})$, let $\iota_{\varphi_i}$ be as in~\eqref{iotavarphi} and
let $\Omega_{i,\beta} := \iota_{\varphi_i}(U_i\times (-\beta,\beta))$. 
Using~\eqref{property_wt_chi} we get
\[
\int_{\Omega_\beta} f(x)\rmd x 
= 
\int_{\Omega_\beta} \sum_{i\in I} \wt\chi_i(x) f(x)\rmd x 
= 
\sum_{i\in I}\int_{\Omega_{i,\beta}} \wt\chi_i(x) f(x)\rmd x.
\]
Making the substitution $x = \iota_{\varphi_i}(u,t)$, $i\in I$,
in  each summand of the last formula,
we get with the aid of~\eqref{DetDiota} 
\[
\int_{\Omega_\beta} f(x)\rmd x =\sum_{i\in I}\int_{U_i} 
\int_{-\beta}^\beta
\wt\chi_i(\iota_{\varphi_i}(u,t)) 
f(\iota_{\varphi_i}(u,t))
\det(1 - tL_i(u))\sqrt{\det G_i(u)}
\rmd t \rmd u.
\]
Using $\wt\chi_i(\iota_{\varphi_i}(u,t))=\chi_i(\varphi_i(u))$, \eqref{integral} 
and $\iota_{\varphi_i}(u, t) = \iota_\beta(\varphi_i(u), t)$, we end up with
\[
\int_{\Omega_\beta} f(x)\rmd x  
= \int_{-\beta}^\beta\int_{\Sigma} f(\iota_\beta(x_\Sigma,t))
\det(1 - tW({x_\Sigma}))\rmd\sigma(x_\Sigma)\rmd t.
\]
Together with Hypothesis~\ref{hypothesis_hypersurface}~{\rm (b)}
this formula implies assertions (i) and (ii).
\end{proof}

\subsection{Schr\"{o}dinger operators with $\delta$-interactions on hypersurfaces} \label{section_delta_op}

In this section we recall a representation for the resolvent of the self-adjoint
Schr\"odinger operator $A_{\delta, \alpha}$ in $L^2(\mathbb{R}^d)$ 
with a $\delta$-interaction supported on the $C^2$-hypersurface $\Sigma$. 
As in \cite{brasche_exner_kuperin_seba_1994} the operator $A_{\delta, \alpha}$ is defined via the corresponding quadratic form
with the help of the first representation theorem \cite[Theorem VI 2.1]{kato};
the functions in the domain of $A_{\delta, \alpha}$ then satisfy the typical 
$\delta$-type boundary conditions on $\Sigma$, see, e.g.,
\cite[Theorem 3.3]{BEL14_JPA}. In the following $A_{\delta, \alpha}$ is the unique self-adjoint operator in $L^2(\dR^d)$ 
associated to the form $\mathfrak{a}_{\delta, \alpha}$ in the next lemma; cf. \eqref{ada}.

\begin{lem} \label{lemma_delta_form}
Let $\Sigma \subset \dR^d$ be a $C^2$-hypersurface, assume that Hypothesis~\ref{hypothesis_hypersurface} is satisfied and
let $\alpha \in L^\infty(\Sigma)$ be a real valued function. Then the symmetric sesquilinear form 
\begin{equation} \label{def_delta_form}
  \mathfrak{a}_{\delta, \alpha}[f, g] := 
    \big( \nabla f, \nabla g \big)_{L^2(\mathbb{R}^d; \mathbb{C}^d)}
    - \int_{\Sigma}\alpha\, f|_\Sigma \,\overline{g|_\Sigma} \, \mathrm{d} \sigma,\quad \dom \mathfrak{a}_{\delta, \alpha} := H^1(\mathbb{R}^d),
\end{equation}
is densely defined, closed and bounded from below in $L^2(\dR^d)$;
here $f|_\Sigma$, $g|_\Sigma$ denote the traces of functions $f,g \in H^1(\mathbb{R}^d)$ on $\Sigma$. 
\end{lem}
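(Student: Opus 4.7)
The plan is to decompose the form as $\mathfrak{a}_{\delta,\alpha} = \mathfrak{a}_0 + \mathfrak{b}$, where
\begin{equation*}
\mathfrak{a}_0[f,g] := (\nabla f,\nabla g)_{L^2(\dR^d;\dC^d)}, \qquad \mathfrak{b}[f,g] := -\int_\Sigma \alpha\, f|_\Sigma\, \overline{g|_\Sigma}\,\mathrm{d}\sigma,
\end{equation*}
both with form domain $H^1(\dR^d)$. The form $\mathfrak{a}_0$ is nothing but the closed, nonnegative Dirichlet form of $-\Delta$ on $\dR^d$, and symmetry of $\mathfrak{a}_{\delta,\alpha}$ is immediate since $\alpha$ is real. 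Dense definedness follows from $C_0^\infty(\dR^d) \subset H^1(\dR^d) \subset L^2(\dR^d)$. The closedness and lower semiboundedness of $\mathfrak{a}_{\delta,\alpha}$ will both follow at once from the KLMN theorem (see, e.g., \cite[Theorem~VI.1.33]{kato}), provided I can show that $\mathfrak{b}$ is form-bounded with respect to $\mathfrak{a}_0$ with relative bound zero.

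The heart of the argument is a uniform trace-type estimate: for every $\delta > 0$ there exists $C_\delta > 0$ such that
\begin{equation*}
\|f|_\Sigma\|_{L^2(\Sigma)}^2 \le \delta \|\nabla f\|_{L^2(\dR^d)}^2 + C_\delta \|f\|_{L^2(\dR^d)}^2 \qquad \text{for all } f \in H^1(\dR^d).
\end{equation*}
By density, I may prove this for $f \in C_0^\infty(\dR^d)$, for which $f|_\Sigma$ is mere pointwise restriction. Fix $\varepsilon \in (0,\beta)$; the fundamental theorem of calculus along the normal direction gives
\begin{equation*}
f(x_\Sigma) = f(x_\Sigma + t\nu(x_\Sigma)) - \int_0^t \partial_s f(x_\Sigma + s\nu(x_\Sigma))\,\mathrm{d}s,\qquad t \in (0,\varepsilon),
\end{equation*}
and squaring, applying Cauchy--Schwarz to the integral, and averaging over $t \in (0,\varepsilon)$ produces
\begin{equation*}
|f(x_\Sigma)|^2 \le \frac{2}{\varepsilon}\int_0^\varepsilon |f(x_\Sigma + t\nu(x_\Sigma))|^2\,\mathrm{d}t + 2\varepsilon \int_0^\varepsilon |\nabla f(x_\Sigma + s\nu(x_\Sigma))|^2\,\mathrm{d}s.
\end{equation*}
Integrating $\mathrm{d}\sigma(x_\Sigma)$ over $\Sigma$ and invoking Proposition~\ref{corollary_integration_tube}(ii) together with Hypothesis~\ref{hypothesis_hypersurface}(b) (which yields $\det(1-tW(x_\Sigma)) > 1-\eta$) to recognize each $(x_\Sigma,t)$-integral as an integral over the one-sided collar $\iota_\Sigma(\Sigma \times (0,\varepsilon)) \subset \dR^d$, I obtain
\begin{equation*}
\|f|_\Sigma\|_{L^2(\Sigma)}^2 \le \frac{2}{\varepsilon(1-\eta)}\|f\|_{L^2(\dR^d)}^2 + \frac{2\varepsilon}{1-\eta}\|\nabla f\|_{L^2(\dR^d)}^2,
\end{equation*}
so choosing $\varepsilon$ sufficiently small establishes the trace estimate with the desired $\delta$.

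Combining this with $|\mathfrak{b}[f,f]| \le \|\alpha\|_\infty \|f|_\Sigma\|_{L^2(\Sigma)}^2$ shows that $\mathfrak{b}$ is symmetric and $\mathfrak{a}_0$-bounded with relative bound zero, so the KLMN theorem gives that $\mathfrak{a}_{\delta,\alpha} = \mathfrak{a}_0 + \mathfrak{b}$ is densely defined, closed, and bounded from below on $H^1(\dR^d)$. The main obstacle is the uniform trace inequality itself: classical trace theorems on Lipschitz domains do not apply directly since $\Sigma$ is possibly unbounded and without any control on its geometry at infinity beyond what Hypothesis~\ref{hypothesis_hypersurface} provides. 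It is precisely the uniform collar neighborhood $\Omega_\beta$ and the global change-of-variables formula of Proposition~\ref{corollary_integration_tube} that allow the one-dimensional Hardy-type estimate along normals to be integrated over all of $\Sigma$ with constants independent of $x_\Sigma$.
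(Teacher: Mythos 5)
Your proof is correct, and it takes a genuinely different route from the paper's. The paper establishes the relative bound via the abstract trace theorem of Jonsson--Wallin, which asserts boundedness of the trace $H^s(\dR^d)\to L^2(\Sigma)$ for any $s>\tfrac12$; the geometric input needed for that theorem is the uniform upper Ahlfors regularity $\sigma(\Sigma\cap B(x,r))\le \widetilde C\,r^{d-1}$, which the paper extracts from Hypothesis~\ref{hypothesis_hypersurface}\,(c) in Lemma~\ref{lemma1}\,(i). The relative bound zero then comes from the interpolation $\|f\|_{H^s}^2\le\varepsilon\|f\|_{H^1}^2+C(\varepsilon)\|f\|_{L^2}^2$ for $s<1$. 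You instead bypass the abstract trace theorem entirely and prove the weighted inequality $\|f|_\Sigma\|_{L^2(\Sigma)}^2\le\delta\|\nabla f\|^2+C_\delta\|f\|^2$ by a one-dimensional fundamental-theorem-of-calculus argument along the normal fibers, combined with the change-of-variables formula of Proposition~\ref{corollary_integration_tube} and the Jacobian bound from Hypothesis~\ref{hypothesis_hypersurface}\,(b). Both routes then feed into \cite[Theorem~VI.1.33]{kato}. Your version is more elementary and self-contained and makes the role of the uniform collar neighborhood fully explicit; the paper's version is shorter if one accepts the Jonsson--Wallin theorem as a black box and has the side benefit of giving trace boundedness on the whole scale $H^s$, $s>\tfrac12$, not just a quadratic-form estimate on $H^1$. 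One minor remark: you should note that since the trace is being constructed by density from $C_0^\infty$, your argument also \emph{defines} the trace operator on $H^1(\dR^d)$ rather than presupposing its existence, which makes the proof logically self-contained in a way the paper's is not (the paper borrows existence of the trace from \cite{jonsson_wallin_book}).
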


\begin{proof}
First, we note that Lemma \ref{lemma1} (i) together with 
\cite[Theorem VII 2, Remark VI 1]{jonsson_wallin_book} 
(see also \cite[Section VIII 1.1]{jonsson_wallin_book}) imply
that 
for $\tfrac{1}{2}<s\leq 1$ there is a bounded trace operator 
from $H^s(\mathbb{R}^d)$ to $L^2(\Sigma)$. In particular, the form $\mathfrak{a}_{\delta, \alpha}$ in \eqref{def_delta_form} is well-defined. Moreover, since
$H^1(\dR^d)$ is dense in $L^2(\dR^d)$, the form $\fra_{\delta,\alpha}$ is densely defined
in $L^2(\dR^d)$.

Subsequently, 
fix some $s\in\big(\tfrac{1}{2},1\big)$ and $c_s>0$ such that $\Vert f\vert_\Sigma\Vert_{L^2(\Sigma)}\leq \sqrt{c_s}\Vert f\Vert_{H^s(\dR^d)}$ for all 
$f\in H^1(\dR^d)$.
Let $\varepsilon>0$ and use \cite[Theorem 3.30]{haroske_triebel_2008} or 
\cite[Satz 11.18 (e)]{weidmann1}
to see that there exists a $C(\varepsilon)>0$ such that
\begin{equation}\label{estimate}
\begin{split}
 \vert (\alpha f\vert_\Sigma,f\vert_\Sigma)_{L^2(\Sigma)}\vert&\leq\Vert \alpha\Vert_\infty \Vert f\vert_\Sigma\Vert_{L^2(\Sigma)}^2 
 \leq c_s \Vert \alpha\Vert_\infty \Vert f\Vert_{H^s(\dR^d)}^2\\
 &\leq c_s \Vert \alpha\Vert_\infty \left(\varepsilon \Vert f\Vert_{H^1(\dR^d)}^2+C(\varepsilon)\Vert f\Vert_{L^2(\dR^d)}^2\right).
 \end{split}
\end{equation}
Thus, for sufficiently small $\varepsilon >0$ the form $f \mapsto (\alpha f\vert_\Sigma,f\vert_\Sigma)$ on $H^1(\dR^d)$ is relatively bounded with respect to the 
closed and nonnegative form 
$f \mapsto ( \nabla f, \nabla f)_{L^2(\dR^d; \dC^d)}$ on $H^1(\dR^d)$ with 
 bound smaller than one. Then by \cite[Theorem~VI~1.33]{kato} the form 
$\mathfrak{a}_{\delta, \alpha}$ in \eqref{def_delta_form} is closed and bounded from below.  
\end{proof}

Next, we provide a formula for the resolvent of the Schr\"{o}dinger operator $A_{\delta, \alpha}$.
For this purpose some notations are required. The free Laplace operator in $L^2(\mathbb{R}^d)$
with domain $H^2(\mathbb{R}^d)$ is denoted by $-\Delta$; it is clear that $-\Delta$ coincides with $A_{\delta,0}$ ($\alpha \equiv 0$) in the above lemma.
The spectrum of $-\Delta$ is given by
$\sigma(-\Delta) = [0, \infty)$. For $\lambda \in (-\infty, 0) \subset \rho(-\Delta)$
we define the function
\begin{equation} \label{def_G_lambda}
  G_{\lambda}(x) 
  := \frac{1}{(2 \pi)^{d/2}} \left( \frac{|x|}{\sqrt{-\lambda}} \right)^{1 - d/2} 
  K_{d/2 - 1} \left( \sqrt{-\lambda} |x| \right), 
  \quad x \in \mathbb{R}^d \setminus \{ 0 \},
\end{equation}
where $K_{d/2 - 1}$ denotes a modified Bessel 
function of the second kind and order $\frac{d}{2} - 1$, see \cite{abramowitz_stegun} for the 
definition and the properties of these functions.
Then, 
\begin{equation*}
  (R(\lambda) f)(x) := \big((-\Delta - \lambda)^{-1} f\big)(x) 
  = \int_{\mathbb{R}^d} G_{\lambda} (x - y) f(y) \mathrm{d} y;
\end{equation*}
cf. \cite[Section 7.4]{teschl_book}.
Next we define for $\lambda \in (-\infty, 0)$
integral operators $\gamma(\lambda)$, $M(\lambda)$
and provide the integral representation for the adjoint of  $\gamma(\lambda)$
\begin{subequations}\label{def_gamma_field_Weyl_function}
\begin{align} 
	\label{def_gamma_field} 
    &\!\gamma(\lambda)\colon\!  L^2(\Sigma) \rightarrow L^2(\mathbb{R}^d),&\quad\!\!
    (\gamma(\lambda) \xi)(x) &\!:=\! \int_{\Sigma} G_{\lambda}(x - y_{\Sigma}) \xi(y_\Sigma) 
    \mathrm{d} \sigma(y_\Sigma); \\
    \label{def_Weyl_function} 
    &\!M(\lambda)\colon\! L^2(\Sigma) \rightarrow L^2(\Sigma), &\quad\!\!
    (M(\lambda) \xi)(x_\Sigma)&\! :=\! \int_{\Sigma} G_{\lambda}(x_\Sigma - y_{\Sigma}) 
    \xi(y_\Sigma) \mathrm{d} \sigma(y_\Sigma); \\
    \label{def_gamma_field_adjoint} 
    &\!\gamma(\lambda)^*\colon\! L^2(\mathbb{R}^d) \rightarrow L^2(\Sigma),& \quad\!\!
    (\gamma(\lambda)^* f)(x_\Sigma) &=\! \int_{\mathbb{R}^d} G_{\lambda}(x_\Sigma - y) f(y) 
    \mathrm{d} y.
\end{align}
\end{subequations}
For our later considerations the resolvent formula in the next theorem is particularly useful. In the proof of item (a) and later in Section~\ref{section_approximition}
the Schur test for integral operators will be used frequently, see, e.g., \cite[Example III 2.4]{kato} or \cite[Satz 6.9]{weidmann1}.

\begin{thm} \label{theorem_delta_op}
Let $\Sigma \subset \dR^d$ be a $C^2$-hypersurface which satisfies Hypothesis~\ref{hypothesis_hypersurface} and
let $\alpha \in L^\infty(\Sigma)$ be a real valued function. Then the following statements are true.
  \begin{itemize}
    \item[\rm (a)] For $\lambda \in (-\infty, 0)$ 
    the operators $\gamma(\lambda), M(\lambda)$ and $\gamma(\lambda)^*$ in
    \eqref{def_gamma_field_Weyl_function}
    are bounded and everywhere defined.     
    \item[\rm (b)] There exists a $\lambda_0 < 0$
    such that $1 - \alpha M(\lambda)$ admits a bounded and everywhere defined
    inverse for all $\lambda \in (-\infty, \lambda_0)$. 
    These $\lambda$ belong to $\rho(A_{\delta, \alpha})$ and it holds
    \begin{equation*}
      (A_{\delta, \alpha} - \lambda)^{-1} 
        = R(\lambda) + \gamma(\lambda) \big(1 - \alpha M(\lambda)\big)^{-1} 
        \alpha \gamma(\lambda)^*.
    \end{equation*}
  \end{itemize}
\end{thm}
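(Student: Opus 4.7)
The plan is to verify the two items separately, with the Schur test as the workhorse for (a) and a standard Birman–Schwinger/Neumann-series argument combined with form-theoretic verification for (b).

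For item (a) I would apply the Schur test to all three integral operators using the explicit kernel $G_\lambda$ in \eqref{def_G_lambda}. The modified Bessel function $K_{d/2-1}(\sqrt{-\lambda}\,|x|)$ decays exponentially as $|x|\to\infty$ and has the weak singularity $G_\lambda(x)=O(|x|^{2-d})$ for $d\ge 3$ and $O(|\log|x||)$ for $d=2$ near $x=0$. For $\gamma(\lambda)^*$ (and hence for $\gamma(\lambda)$ by duality) I need uniform control of $\int_{\mathbb{R}^d}G_\lambda(x_\Sigma-y)\,\rmd y$ in $x_\Sigma\in\Sigma$ (which is just $(-\lambda)^{-1}$ by the Fourier representation) and of $\int_\Sigma G_\lambda(x_\Sigma-y)\,\rmd\sigma(x_\Sigma)$ in $y\in\mathbb{R}^d$. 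For the latter, and likewise for the Schur bound on $M(\lambda)$, I localize via the parametrization $\{\varphi_i,U_i,V_i\}_{i\in I}$ and the partition of unity, so that locally the integral reduces to a convolution-type integral of $G_\lambda$ against the surface density $\sqrt{\det G_i(u)}$; since $\Sigma$ is a Lipschitz image of a $(d-1)$-dimensional set, the singularity $|x_\Sigma-y_\Sigma|^{2-d}$ is integrable on $\Sigma$ uniformly in the base point, and the exponential decay together with the finiteness of $I$ controls the tails. This gives boundedness of all three operators for every $\lambda\in(-\infty,0)$.

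For the first assertion in (b), I would show that $\|M(\lambda)\|_{L^2(\Sigma)\to L^2(\Sigma)}\to 0$ as $\lambda\to-\infty$. The same Schur estimates from (a) yield an explicit bound on $\|M(\lambda)\|$ in terms of $\sup_{x_\Sigma}\int_\Sigma G_\lambda(x_\Sigma-y_\Sigma)\,\rmd\sigma(y_\Sigma)$, and the pointwise monotone decay $G_\lambda(x)\downarrow 0$ as $\lambda\to-\infty$ together with dominated convergence shows this supremum tends to zero. Since $\alpha\in L^\infty(\Sigma)$, choosing $\lambda_0<0$ so that $\|\alpha\|_\infty \|M(\lambda)\|<1$ for all $\lambda<\lambda_0$ makes $1-\alpha M(\lambda)$ boundedly invertible via the Neumann series.

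For the resolvent formula I would verify it directly through the quadratic form $\mathfrak{a}_{\delta,\alpha}$. Fix $\lambda<\lambda_0$ and $f\in L^2(\mathbb{R}^d)$, and set
\[
u := R(\lambda)f + \gamma(\lambda)\bigl(1-\alpha M(\lambda)\bigr)^{-1}\alpha\,\gamma(\lambda)^*f,\qquad \xi := \bigl(1-\alpha M(\lambda)\bigr)^{-1}\alpha\,\gamma(\lambda)^*f.
\]
The goal is to check (i) $u\in H^1(\mathbb{R}^d)=\dom\mathfrak{a}_{\delta,\alpha}$ with trace $u|_\Sigma = \gamma(\lambda)^*f + M(\lambda)\xi$, and (ii) the form identity
\[
\mathfrak{a}_{\delta,\alpha}[u,v] - \lambda(u,v)_{L^2(\mathbb{R}^d)} = (f,v)_{L^2(\mathbb{R}^d)}\qquad\text{for all }v\in H^1(\mathbb{R}^d),
\]
which by the first representation theorem identifies $u=(A_{\delta,\alpha}-\lambda)^{-1}f$ and shows $\lambda\in\rho(A_{\delta,\alpha})$. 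For (i), $R(\lambda)f\in H^2$, and the single-layer potential $\gamma(\lambda)\xi$ belongs to $H^1(\mathbb{R}^d)$ with the standard jump relation on $\Sigma$ (its trace is $M(\lambda)\xi$); these mapping properties follow from the integral estimates already established and classical potential theory. For (ii), one computes $(\nabla u,\nabla v)-\lambda(u,v)=(f,v)+(\xi,v|_\Sigma)_{L^2(\Sigma)}$ using that $\gamma(\lambda)\xi$ is weakly harmonic away from $\Sigma$, and the definition of $\xi$ gives $\xi = \alpha\,u|_\Sigma$, so the extra boundary term cancels the $-\int_\Sigma \alpha u|_\Sigma\overline{v|_\Sigma}\,\rmd\sigma$ contribution in $\mathfrak{a}_{\delta,\alpha}$.

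The main obstacle I expect is the trace/jump computation underlying step (i) and the boundary integration by parts in step (ii): one has to rigorously justify that $\gamma(\lambda)\xi\in H^1(\mathbb{R}^d)$ with the stated trace, and that no distributional contribution from $\Sigma$ is missed when testing $(-\Delta-\lambda)(\gamma(\lambda)\xi)$ against $v$. This can be handled via a standard density argument using $C_0^\infty$ test functions first and then extending to $H^1(\mathbb{R}^d)$ via continuity of the trace map $H^s(\mathbb{R}^d)\to L^2(\Sigma)$ for $s>1/2$ (already invoked in Lemma~\ref{lemma_delta_form}), together with the Schur bounds from (a).
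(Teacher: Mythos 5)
Part (a) of your proposal follows the paper almost verbatim: the paper also proves boundedness by the Schur test, with the two required suprema supplied by Proposition~\ref{proposition2}\,(i) and Proposition~\ref{proposition_appendix_hat_1}\,(i) (the latter being exactly the observation that $\int_{\dR^d}G_\lambda(x-y_\Sigma)\,\dd x$ is uniformly bounded, which as you note equals $-1/\lambda$ by positivity of $G_\lambda$).

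Part (b) is where your route genuinely diverges. The paper does \emph{not} verify the resolvent formula directly; instead it introduces the measure $m(B):=\sigma(B\cap\Sigma)$, observes via \eqref{estimate} that $m$ satisfies the Kato-type condition of \cite[eq.~(2.1)]{brasche_exner_kuperin_seba_1994}, identifies $\gamma(\lambda)$, $M(\lambda)$, $\gamma(\lambda)^*$ with the operators $R_{m\,\dd x}$, $R_{mm}$, $R_{\dd x\,m}$ of that reference, and then cites \cite[Lemma~2.3\,(ii),(iii)]{brasche_exner_kuperin_seba_1994} wholesale. You instead give a self-contained proof: a Neumann-series argument for invertibility of $1-\alpha M(\lambda)$, followed by direct verification through the form $\fraD$ and Kato's first representation theorem that the candidate $u=R(\lambda)f+\gamma(\lambda)\xi$ with $\xi=(1-\alpha M(\lambda))^{-1}\alpha\gamma(\lambda)^*f$ solves $(A_{\delta,\alpha}-\lambda)u=f$. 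Both are correct. The paper's route is shorter but relies on matching the abstract BEKS framework exactly; yours is longer but makes the mechanism (single-layer potential, jump of normal derivative, $\xi=\alpha u|_\Sigma$) explicit and transfers to other settings more readily.

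Two points in your sketch need tightening. First, the claim that $\sup_{x_\Sigma\in\Sigma}\int_\Sigma G_\lambda(x_\Sigma-y_\Sigma)\,\dd\sigma(y_\Sigma)\to 0$ as $\lambda\to-\infty$ does not follow from a pointwise dominated-convergence argument, since you need the limit to hold uniformly in the base point $x_\Sigma$; the paper obtains the explicit decay rate $O((-\lambda)^{-1/2})$ via the dyadic decomposition in the proof of Proposition~\ref{proposition2}\,(i), and some such quantitative estimate is genuinely required. Second, the key regularity assertion $\gamma(\lambda)\xi\in H^1(\dR^d)$ with trace $M(\lambda)\xi$ is not a consequence of the Schur bounds in (a), which give only $L^2\to L^2$ boundedness. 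The clean argument is the one you gesture at: with $\tau\colon H^s(\dR^d)\to L^2(\Sigma)$, $\tfrac12<s<1$, the bounded trace operator and $\tau^*\colon L^2(\Sigma)\to H^{-s}(\dR^d)$ its adjoint, one has $\gamma(\lambda)=R(\lambda)\tau^*$, so $\gamma(\lambda)$ maps $L^2(\Sigma)$ boundedly into $H^{2-s}(\dR^d)\subset H^1(\dR^d)$, and the trace identity $(\gamma(\lambda)\xi)|_\Sigma=M(\lambda)\xi$ and the weak equation $(\nabla\gamma(\lambda)\xi,\nabla v)-\lambda(\gamma(\lambda)\xi,v)=(\xi,v|_\Sigma)_{L^2(\Sigma)}$ follow from this factorization by duality. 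Once these two points are filled in, your argument is complete and rigorous.
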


\begin{proof}
 (a) Let $\lambda \in (-\infty, 0)$. In order to prove that $\gamma(\lambda)$
  is well-defined and bounded we use the Schur test. In fact, from 
  Proposition \ref{proposition2} (i) and Proposition \ref{proposition_appendix_hat_1} (i) we obtain
  \begin{equation*}
    \begin{split}
      \| \gamma(\lambda) \|^2 &\leq \sup_{x \in \mathbb{R}^d} \int_{\Sigma} \big| G_{\lambda}(x - y_{\Sigma})\big| \mathrm{d} \sigma(y_{\Sigma})
         \cdot \sup_{y_{\Sigma} \in \Sigma} \int_{\mathbb{R}^d} \big| G_{\lambda}(x - y_{\Sigma}) \big| \mathrm{d} x < \infty.
    \end{split}
  \end{equation*}
  In a similar way one can show that $M(\lambda)$ and $\gamma(\lambda)^*$ are bounded. 
  
  Item (b) is essentially a variant of \cite[Lemma 2.3]{brasche_exner_kuperin_seba_1994}. 
  In fact, let us define for Borel sets $B \subset \mathbb{R}^d$ the measure $m$ by
\begin{equation} \label{measure_Kato}
  m(B) := \sigma(B \cap \Sigma),
\end{equation}
where 
$\sigma$ is the measure in \eqref{integral}. Then $m(\mathbb{R}^d \setminus \Sigma) = 0$,
the spaces $L^2(\mathbb{R}^d; m)$ and $L^2(\Sigma)$ can be identified and for $f\in L^1(\Sigma)$
one has 
\begin{equation*}
 \int_\Sigma f(x_\Sigma) \rmd\sigma(x_\Sigma) = \int_{\dR^d} \widetilde f(x)\,\rmd m(x), 
\end{equation*}
where $\widetilde f$ is some extension of $f$ onto the $m$-null set $\mathbb{R}^d \setminus \Sigma$.
Moreover, the estimate \eqref{estimate} shows that the measure $m$ in \eqref{measure_Kato} satisfies \cite[eq. (2.1)]{brasche_exner_kuperin_seba_1994} 
(with $\gamma=-\alpha$).
Now it is easy to see that the integral operators $\gamma(\lambda)$, $M(\lambda)$, and
$\gamma(\lambda)^*$ in \eqref{def_gamma_field_Weyl_function} can be identified with the
operators $R_{m\, \rmd x}(i \sqrt{-\lambda})$, $R_{m m}(i \sqrt{-\lambda})$ and $R_{\rmd x\, m}(i \sqrt{-\lambda})$ in \cite{brasche_exner_kuperin_seba_1994}, respectively.
The assertion in item (b) follows from \cite[Lemma 2.3 (ii) and~(iii)]{brasche_exner_kuperin_seba_1994}.
\end{proof}

\section{Approximation of $A_{\delta, \alpha}$ by Schr\"odinger operators with regular potentials} \label{section_approximition}

In this section we prove Theorem~\ref{theorem_convergence}, the main result of this paper.
First, in Section \ref{subsection_H_eps} we recall briefly the definitions of the layer neigborhoods, the scaled potentials and the associated 
Hamiltonians $H_\varepsilon$ from the Introduction, and we derive a resolvent formula for $H_\varepsilon$
which is convenient in the convergence analysis.
Section \ref{subsection_convergence} contains the main part of the proof of Theorem \ref{theorem_convergence}.
It is efficient to prove Theorem~\ref{theorem_convergence} for the special case $Q=0$ first; 
all technical estimates in Appendix~\ref{appa} and all 
preparatory steps  in Sections~\ref{subsection_H_eps} and~\ref{subsection_convergence} are tailormade for this case. 
The general case $Q\not=0$
is treated with a simple perturbation argument in the last step of the proof of Theorem~\ref{theorem_convergence}.
Finally, 
in Section~\ref{subsection_consequences} we discuss some connections
between the spectral properties of $H_\varepsilon$ and of $A_{\delta,\alpha}$
that follow from Theorem~\ref{theorem_convergence}.

\subsection{Preliminary considerations on $H_\varepsilon$} \label{subsection_H_eps}

Let $d \geq 2$ and $\Sigma \subset \mathbb{R}^d$ be a $C^2$-hypersurface
which satisfies Hypothesis~\ref{hypothesis_hypersurface}.
Then the mapping 
$\iota_\Sigma$ in \eqref{iota} is injective on $\Sigma \times (-\beta, \beta)$ 
for some (in the following fixed) $\beta >0$ as in 
Hypothesis~\ref{hypothesis_hypersurface}.
Recall the definition of the layer $\Omega_\varepsilon$
from \eqref{def_tube} for $\varepsilon \in (0, \beta]$,
fix a real valued potential $V \in L^{\infty}(\mathbb{R}^d)$
with $\supp V \subset \Omega_\beta$ and consider
the scaled potentials 
\begin{equation*}
  V_{\varepsilon}(x) = 
  \begin{cases} \frac{\beta}{\varepsilon} V\left( x_{\Sigma} + 
  \frac{\beta}{\varepsilon} t \nu(x_{\Sigma}) \right), 
  &\text{if } x = x_{\Sigma} + t \nu(x_{\Sigma}) \in \Omega_{\varepsilon},
  \\ 0, &\text{else,} \end{cases}
\end{equation*}
where $\nu$ is
the continuous unit normal vector field on $\Sigma$.
Observe that $V_\beta = V$ and $\supp V_\varepsilon \subset \Omega_\varepsilon$.
The associated self-adjoint Schr\"odinger operators are given by
\begin{equation} \label{def_H_eps}
  H_\varepsilon f = -\Delta f - V_{\varepsilon} f, 
  \qquad 
  \dom H_\varepsilon = H^2(\dR^d).
\end{equation}

Our main objective in this section is to derive the resolvent formula for $H_\varepsilon$ in Proposition~\ref{theorem_resolvent_formula}
which turns out to be particularly convenient for our convergence analysis.
We start with the standard factorization of the potentials $V_\varepsilon= v_\varepsilon u_\varepsilon$, where 
\begin{equation} \label{def_u_eps}
  u_\varepsilon: L^2(\mathbb{R}^d) \rightarrow L^2(\Omega_\varepsilon), \quad 
  (u_{\varepsilon} f)(x) := |V_{\varepsilon}(x)|^{1/2} f(x), \quad x \in \Omega_\varepsilon,
\end{equation}
and 
\begin{equation} \label{def_v_eps}
  v_\varepsilon: L^2(\Omega_\varepsilon) \rightarrow L^2(\mathbb{R}^d), \quad 
  (v_{\varepsilon} h)(x) := \begin{cases} \mathrm{sign}\, V_\varepsilon(x) |V_{\varepsilon}(x)|^{1/2} h(x), &x \in \Omega_\varepsilon, \\
                                       0, &\text{else.} \end{cases}
\end{equation}
Recall that for $\lambda \in \rho(-\Delta) = \mathbb{C} \setminus [0, \infty)$ 
the resolvent of $-\Delta$ is denoted by $R(\lambda) = (-\Delta - \lambda)^{-1}$.
The following proposition contains a first auxiliary resolvent formula for $H_\varepsilon$.

\begin{prop} \label{prop_first_resolvent_formula}
  Let $H_\varepsilon$ be defined as in \eqref{def_H_eps} and let 
  $u_{\varepsilon}, v_{\varepsilon}$ and $R(\lambda)$ 
  be given as above. Then the following assertions are true.
  \begin{itemize}
  \item[\rm (i)] For all $\lambda \in \mathbb{C} \setminus [0, \infty)$ 
  with $1\in\rho(u_{\varepsilon} R(\lambda) v_{\varepsilon})$
  one has $\lambda \in \rho(H_\varepsilon)$ and
  \begin{equation*}
      (H_\varepsilon - \lambda)^{-1} = R(\lambda) + 
      R(\lambda) v_{\varepsilon} \left( 1 - u_{\varepsilon}R(\lambda)
            v_{\varepsilon} \right)^{-1} u_{\varepsilon}R(\lambda). 
  \end{equation*}
  
  \item[\rm (ii)] For all $M \in (0, 1)$ there exists $\lambda_M < 0$ such that
  \begin{equation*}
    \left\| u_{\varepsilon} R(\lambda) v_{\varepsilon} \right\| \leq M
  \end{equation*}
  holds for all $\varepsilon \in (0, \beta]$ and $\lambda < \lambda_M$.
  In particular, for these $\lambda$ the results from {\rm (i)} apply 
  and hence $(-\infty, \lambda_M) \subset \rho(H_\varepsilon)$
  for all $\varepsilon \in (0, \beta]$.
  \end{itemize}
\end{prop}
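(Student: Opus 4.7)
For part (i), the plan is the standard Birman--Schwinger reduction. Starting from $(H_\varepsilon - \lambda)f = g$ with $\lambda \in \dC\setminus[0,\infty)$ and using the factorization $V_\varepsilon = v_\varepsilon u_\varepsilon$, I would rewrite the equation as $(-\Delta - \lambda)f = g + v_\varepsilon u_\varepsilon f$, so that $f = R(\lambda) g + R(\lambda) v_\varepsilon u_\varepsilon f$. Applying $u_\varepsilon$ and using the assumed invertibility of $1 - u_\varepsilon R(\lambda) v_\varepsilon$ yields $u_\varepsilon f = (1 - u_\varepsilon R(\lambda) v_\varepsilon)^{-1} u_\varepsilon R(\lambda) g$, and substituting back gives the claimed formula. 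The converse (that the right-hand side is a two-sided inverse of $H_\varepsilon - \lambda$, whence $\lambda \in \rho(H_\varepsilon)$) follows by a direct computation: one applies $H_\varepsilon - \lambda$ to the right-hand side and uses $V_\varepsilon R(\lambda) v_\varepsilon = v_\varepsilon u_\varepsilon R(\lambda) v_\varepsilon$ together with the identity $v_\varepsilon(1-u_\varepsilon R(\lambda)v_\varepsilon) = v_\varepsilon - V_\varepsilon R(\lambda) v_\varepsilon$.

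For part (ii), my plan is to apply the Schur test to the integral kernel
\begin{equation*}
   K_\varepsilon^\lambda(x,y) = |V_\varepsilon(x)|^{1/2} G_\lambda(x-y)\, \mathrm{sign}(V_\varepsilon(y))\, |V_\varepsilon(y)|^{1/2}
\end{equation*}
of $u_\varepsilon R(\lambda) v_\varepsilon$, supported on $\Omega_\varepsilon\times \Omega_\varepsilon$. The naive pointwise estimate $|V_\varepsilon|^{1/2}\leq (\beta/\varepsilon)^{1/2}\|V\|_\infty^{1/2}$ fails, because combined with $\|G_\lambda\|_{L^1}$ it produces a prefactor $\beta/\varepsilon$ that blows up. Instead, I would pass to tubular coordinates $y = y_\Sigma + s\nu(y_\Sigma)$ using Proposition~\ref{corollary_integration_tube}(ii) and then rescale $\tau = (\beta/\varepsilon) s$ in the transverse direction. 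This turns $|V_\varepsilon(y_\Sigma + s\nu(y_\Sigma))|^{1/2}$ into $(\beta/\varepsilon)^{1/2}|V(y_\Sigma + \tau\nu(y_\Sigma))|^{1/2}$ while producing a compensating Jacobian $\mathrm{d} s = (\varepsilon/\beta)\mathrm{d}\tau$; together with the explicit $(\beta/\varepsilon)^{1/2}$ from $|V_\varepsilon(x)|^{1/2}$ this cancels all $\varepsilon$-dependent prefactors. Using Hypothesis~\ref{hypothesis_hypersurface}(b) to bound $|\det(1 - (\varepsilon/\beta)\tau W(y_\Sigma))|\leq 1+\eta$, one obtains
\begin{equation*}
 \int_{\Omega_\varepsilon}|K_\varepsilon^\lambda(x,y)|\,\mathrm{d} y \leq (1+\eta)\|V\|_\infty \int_\Sigma \int_{-\beta}^\beta \big|G_\lambda\big(x - y_\Sigma - (\varepsilon/\beta)\tau\nu(y_\Sigma)\big)\big|\,\mathrm{d}\tau\,\mathrm{d}\sigma(y_\Sigma)
\end{equation*}
uniformly for $x \in \Omega_\varepsilon$, and a symmetric estimate with $x$ and $y$ swapped controls the other Schur integral.

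The main obstacle is to bound the remaining double integral uniformly in $\varepsilon \in (0,\beta]$ and, simultaneously, to make it small by taking $-\lambda$ large. The point $y_\Sigma + (\varepsilon/\beta)\tau\nu(y_\Sigma)$ always stays within the fixed tubular neighbourhood $\Omega_\beta$, so the surface-integral estimates for $|G_\lambda|$ on $\Sigma$ provided in Appendix~\ref{appa} (of the type used in the proof of Theorem~\ref{theorem_delta_op}) furnish an $\varepsilon$-uniform bound of the form $\sup_{z\in\Omega_\beta} \int_\Sigma |G_\lambda(z - y_\Sigma)|\,\mathrm{d}\sigma(y_\Sigma) \leq C(\lambda)$, with $C(\lambda)\to 0$ as $\lambda \to -\infty$ thanks to the exponential decay of $G_\lambda$ at rate $\sqrt{-\lambda}$. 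Multiplied by the transverse length $2\beta$, this yields a Schur bound $\|u_\varepsilon R(\lambda) v_\varepsilon\| \leq 2\beta (1+\eta)\|V\|_\infty\, C(\lambda)$ that is uniform in $\varepsilon \in (0,\beta]$; given $M \in (0,1)$, one then chooses $\lambda_M$ so negative that this quantity is at most $M$, and the last claim of (ii) follows by combining with part (i).
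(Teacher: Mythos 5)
Part (i) is essentially the paper's Birman--Schwinger argument: the paper verifies directly that the candidate operator $T(\lambda)$ is a right inverse and invokes a cited lemma to rule out eigenvalues, while you derive uniqueness/injectivity from the same algebra; these are cosmetic variations of the same computation.

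For part (ii), your argument is correct but takes a genuinely different route, and your criticism of the paper's route is slightly misplaced. The paper \emph{does} use the naive pointwise bound $|V_\varepsilon(x)|^{1/2}|V_\varepsilon(y)|^{1/2}\le (\beta/\varepsilon)\|V\|_\infty$, and this does not fail: the point is that the integral is taken over the thin layer $\Omega_\varepsilon$, and Proposition~\ref{proposition2}~(ii) gives $\sup_x\int_{\Omega_\varepsilon}|G_\lambda(x-y)|\,\dd y\le M\varepsilon$ (with $M$ as small as desired for $\lambda$ sufficiently negative), so the $1/\varepsilon$ cancels against the $\varepsilon$-small volume of the layer. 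You instead rescale the transverse variable to kill the $\varepsilon$-dependence before estimating; that also works and is perhaps cleaner conceptually, but if you inspect the proof of Proposition~\ref{proposition3}~(i) — the estimate you effectively need — you will see that it undoes the rescaling and reduces to Proposition~\ref{proposition2}~(ii) anyway, so both routes rest on the same underlying thin-layer bound. One small technical remark: the integral you end up with has a $y_\Sigma$-dependent shift $x - y_\Sigma - (\varepsilon/\beta)\tau\,\nu(y_\Sigma)$, so it is not literally of the form $\sup_{z\in\Omega_\beta}\int_\Sigma|G_\lambda(z-y_\Sigma)|\,\dd\sigma(y_\Sigma)$ with a fixed $z$; the correct tool is Proposition~\ref{proposition3}~(i) (which controls exactly such shifted surface integrals uniformly in $\varepsilon$ and which can be made small for $\lambda\to-\infty$ by tracing through Proposition~\ref{proposition2}~(ii)). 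This is a fixable imprecision, not a gap.
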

\begin{proof}

  (i) Let $\lambda \in \mathbb{C} \setminus [0, \infty)$  be such that
  $ 1 \in \rho(u_{\varepsilon} R(\lambda) v_{\varepsilon})$. 
  Note that $\lambda$ is not an eigenvalue of $H_\varepsilon$, as
  otherwise 
  $1\in\sigma_{\rm p}(u_{\varepsilon} R(\lambda) v_{\varepsilon})$;
  cf. \cite[Lemma 1]{brasche1995}.
   Next, we define the operator
  \begin{equation*}
    T(\lambda) := 
    R(\lambda) + R(\lambda) v_{\varepsilon} 
    \left( 1 - u_{\varepsilon} R(\lambda) v_{\varepsilon} \right)^{-1} 
    u_{\varepsilon} R(\lambda).
  \end{equation*}
  This operator is well defined and bounded,   
  as $1 \in \rho(u_{\varepsilon} R(\lambda) v_{\varepsilon})$.
  Using $V_\varepsilon = v_\varepsilon u_\varepsilon$ we conclude that
  \begin{equation*}
    \begin{split}
      (H_\varepsilon - \lambda)T(\lambda) f 
       &= \big(-\Delta - \lambda - v_{\varepsilon} u_{\varepsilon}\big) 
           T(\lambda) f \\
       &= f + v_{\varepsilon} \left( 1 - u_{\varepsilon} R(\lambda) v_{\varepsilon} \right)^{-1}
           u_{\varepsilon} R(\lambda) f - v_{\varepsilon} u_{\varepsilon} R(\lambda) f \\
      &\quad \quad - v_{\varepsilon} (1 - 1 + u_{\varepsilon} R(\lambda) v_{\varepsilon}) 
           \left( 1 - u_{\varepsilon} R(\lambda) v_{\varepsilon} \right)^{-1} 
           u_{\varepsilon} R(\lambda) f \\
      &= f + v_{\varepsilon} \left( 1 - u_{\varepsilon} R(\lambda) v_{\varepsilon} \right)^{-1} 
           u_{\varepsilon} R(\lambda) f - v_{\varepsilon} u_{\varepsilon} R(\lambda) f \\
      &\quad \quad - v_{\varepsilon} \left( 1 - u_{\varepsilon} R(\lambda) v_{\varepsilon} \right)^{-1} 
           u_{\varepsilon} R(\lambda) f + v_{\varepsilon} u_{\varepsilon} R(\lambda) f 
      = f
    \end{split}
  \end{equation*}
  holds for any $f \in L^2(\mathbb{R}^d)$.
  Hence, $(H_\varepsilon - \lambda)$ is bijective,
  which implies that $\lambda\in\rho(H_\varepsilon)$, and 
  \[
	  (H_\varepsilon- \lambda)^{-1} =  T(\lambda)=  
    R(\lambda) + R(\lambda) v_{\varepsilon} 
    \left( 1 - u_{\varepsilon} R(\lambda) v_{\varepsilon} \right)^{-1} 
    u_{\varepsilon} R(\lambda).
  \]
  This proves assertion (i).

  (ii) Let $\lambda \in (-\infty, 0)$ and recall that $R(\lambda)$ can be expressed by
  $R(\lambda) f = \int_{\mathbb{R}^d} G_{\lambda}(\cdot - y) f(y) \mathrm{d} y$
  with $G_\lambda$ as in~\eqref{def_G_lambda}.
  Let $M \in (0, 1)$ be fixed. Using the Schur test 
and that the absolute value
of the integral kernel of 
$u_\eps R(\lm) v_\eps$ is symmetric, 
  we find that
  \begin{equation*}
      \big\| 
      u_{\varepsilon} R(\lm) v_{\varepsilon} \big\|
      \leq \sup_{x \in \Omega_\varepsilon} \int_{\Omega_\varepsilon} |V_\varepsilon(x)|^{1/2}
      |G_{\lambda}(x - y)| |V_\varepsilon(y)|^{1/2}
      \dd y
      \leq \frac{\beta}{\varepsilon} \|V\|_{L^\infty} \sup_{x \in \Omega_\varepsilon}  \int_{\Omega_\varepsilon} 
      |G_{\lambda}(x - y)| 
      \dd y.
 \end{equation*} 
  Hence, the claimed result follows from Proposition \ref{proposition2} (ii),
  as this shows the existence of a number $\lambda_M < 0$ such that
  \begin{equation*} 
   \frac{\beta}{\varepsilon} \|V\|_{L^\infty} 
    \int_{\Omega_{\varepsilon}} |G_{\lambda}(x - y)| \mathrm{d} y \leq M
  \end{equation*}
  for any $x \in \Omega_\varepsilon$, all $\lambda < \lambda_M$ and any $\varepsilon \in (0, \beta]$.
\end{proof}

Next, we transform the resolvent formula from Proposition 
\ref{prop_first_resolvent_formula} into another one, 
which is more convenient for the convergence analysis.
This requires several preparatory steps . Recall that for an interval
$I \subset \mathbb{R}$ the space $L^2(\Sigma \times I)$
is equipped with the product measure $\sigma \times \Lambda_1$
of the Hausdorff measure on $\Sigma$ and the one-dimensional Lebesgue measure.
Define the functions 
$u, v \in L^{\infty}(\Sigma \times (-1, 1))$ by
\begin{equation} \label{def_u_v}
  u(x_{\Sigma}, t) := |\beta V(x_{\Sigma} + \beta t \nu(x_{\Sigma}))|^{1/2} 
  \quad \text{and} \quad 
  v(x_{\Sigma}, t) := \sign \big(V(x_{\Sigma} + \beta t \nu(x_{\Sigma}))\big) \cdot u(x_{\Sigma}, t).
\end{equation}
The following operators are essential to
state a convenient resolvent formula for $H_\varepsilon$.
We define for $\eps \in [0,\beta]$ and $\lambda \in (-\infty, 0)$ the integral operators
$A_\eps(\lm)\colon 	L^2(\Sigma \times (-1, 1)) \arr L^2(\dR^d)$,
$B_\eps(\lm)\colon L^2(\Sigma \times (-1, 1)) \arr 
	L^2(\Sigma \times (-1, 1))$ and
$C_\eps(\lm)\colon L^2(\dR^d) \arr L^2(\Sigma \times (-1, 1))$
as
\begin{subequations} \label{def_ABC_eps}
\begin{align}
  \label{def_A_eps}
    (A_\eps(\lm) \Xi)(x) 
    &:= 
    \int_\Sigma \int_{-1}^1 
    G_\lm(x - y_\Sigma - \eps s \nu(y_{\Sigma}))v(y_{\Sigma}, s)
    \det(1 - \eps s W(y_{\Sigma})) \Xi(y_{\Sigma}, s) 
    \dd s \dd \sigma(y_{\Sigma});\\
  \label{def_B_eps}
	(B_\eps(\lm) \Xi)(x_{\Sigma}, t) 
    &:= 
    u(x_{\Sigma}, t) 
    \int_{\Sigma} \int_{-1}^1 
    G_\lm(x_{\Sigma} + \eps t \nu(x_\Sigma) - y_\Sigma - 
    \eps s \nu(y_{\Sigma})) \\
  \notag
    & \qquad \qquad \qquad \qquad \qquad \qquad \qquad \cdot 
   	v(y_{\Sigma}, s) \det(1 - \eps s W(y_{\Sigma})) \Xi(y_{\Sigma}, s) 
    \dd s \dd \sigma(y_{\Sigma});\\
  \label{def_C_eps}
    (C_\eps(\lm) f)(x_{\Sigma}, t) 
    & := 
    u(x_{\Sigma}, t) \int_{\dR^d} 
    G_\lm(x_{\Sigma} + \eps t \nu(x_{\Sigma}) - y) f(y) \dd y.
\end{align}
\end{subequations}
In order to investigate the properties of $A_{\varepsilon}(\lambda), B_{\varepsilon}(\lambda)$
and $C_\varepsilon(\lambda)$, we introduce several auxiliary operators.
For $\varepsilon \in (0, \beta]$ define the embedding operator
\begin{equation} \label{def_embedding}
  \mathcal{I}_\varepsilon:
  L^2(\Sigma \times (-\varepsilon, \varepsilon)) 
  \rightarrow L^2(\Omega_{\varepsilon}), \quad 
  (\mathcal{I}_\varepsilon\Phi)(x_{\Sigma} + t \nu(x_{\Sigma}))
  := \Phi(x_{\Sigma}, t).
\end{equation}
It follows from Hypothesis \ref{hypothesis_hypersurface} (b) and 
Proposition \ref{corollary_integration_tube} that the operator 
$\mathcal{I}_\varepsilon$ is bounded, everywhere defined
and bijective.
Its inverse is given by
\begin{equation*} 
  \mathcal{I}_\varepsilon^{-1}: L^2(\Omega_{\varepsilon}) \rightarrow 
  L^2(\Sigma \times (-\varepsilon, \varepsilon)), 
  \quad (\mathcal{I}_\varepsilon^{-1} h)(x_{\Sigma}, t) = 
  h(x_{\Sigma} + t \nu(x_{\Sigma})).
\end{equation*}
Furthermore, we consider the scaling operator 
\begin{equation} \label{def_S_eps}
  S_{\varepsilon}: L^2(\Sigma \times (-1, 1))
  \rightarrow L^2(\Sigma \times (-\varepsilon, \varepsilon)), \quad 
  (S_{\varepsilon} \Xi) (x_{\Sigma}, t) 
  := \frac{1}{\sqrt{\varepsilon}} \Xi \left( x_{\Sigma}, \frac{t}{\varepsilon} \right).
\end{equation}
The operator $S_{\varepsilon}$ is unitary and its inverse is 
\begin{equation*} 
  \begin{split}
    &S_{\varepsilon}^{-1}: L^2(\Sigma \times (-\varepsilon, \varepsilon)) 
    \rightarrow L^2(\Sigma \times (-1, 1)), \qquad (S_{\varepsilon}^{-1} \Phi)(x_{\Sigma}, t) = 
    \sqrt{\varepsilon} \Phi(x_{\Sigma}, \varepsilon t).
  \end{split}
\end{equation*}
In the next lemma some properties of $A_\varepsilon(\lambda), B_\varepsilon(\lambda)$
and $C_\varepsilon(\lambda)$ are provided.

\begin{lem} \label{lemma_ABC_eps}
  Let $\lambda \in (-\infty, 0)$ and let $\eps \in (0, \beta]$. 
  Moreover, let the operators $u_\varepsilon, v_\varepsilon$, $\cI_\varepsilon$
  and $S_{\varepsilon}$
  be defined by \eqref{def_u_eps}, \eqref{def_v_eps}, \eqref{def_embedding}
  and~\eqref{def_S_eps}, respectively, and let 
  the integral operators $A_\eps(\lm)$, $B_\eps(\lm)$ and $C_\eps(\lm)$
  be as in~\eqref{def_ABC_eps}.
  Then the following assertions are true.
  \begin{myenum}
    \item 
      It holds
      \begin{equation*}\label{eq:ACeps}
	      A_\eps(\lm) = 
    	  R(\lm) v_\eps \cI_\eps S_\varepsilon
	      \quad\text{and}\quad
    	  C_{\eps}(\lm) 
		  = 
    	  S_{\varepsilon}^{-1} \cI_\eps^{-1} 
	      u_{\varepsilon} R(\lm).
      \end{equation*}
      In particular, the operators $A_\eps(\lm)$ and $C_\eps(\lm)$
	  are bounded and everywhere defined.
    \item It holds
	\begin{equation*}
		B_\eps(\lm) = 
	    S_\eps^{-1} \cI_\eps^{-1} u_\eps 
    	R(\lm) v_\eps 
	    \cI_\eps S_\eps.
	\end{equation*}
	Moreover, for any $M \in (0, 1)$ there exists $\lambda_M < 0$
    such that $\| B_{\varepsilon}(\lambda) \| \leq M$ for all 
    $\lambda \in (-\infty, \lambda_M)$ and all $\eps \in (0, \beta]$. 
    In particular, $B_\eps(\lm)$ is bounded and everywhere 	
    defined  and for $\lm < \lm_M$    
	 the operator $1 - B_{\varepsilon}(\lambda)$ has a bounded
	 and everywhere defined inverse.
  \end{myenum}
\end{lem}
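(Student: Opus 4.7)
\textbf{Proof plan for Lemma~\ref{lemma_ABC_eps}.}

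The strategy is to verify the operator factorizations in items (i) and (ii) by direct computation on test functions, then derive the boundedness and norm estimates from the corresponding properties of $R(\lambda)$, $u_\varepsilon$, $v_\varepsilon$, $\cI_\varepsilon$, $S_\varepsilon$, and from Proposition~\ref{prop_first_resolvent_formula}(ii).

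For the first identity in (i), I would start with $\Xi\in L^2(\Sigma\times(-1,1))$ and compute $R(\lambda)v_\varepsilon\cI_\varepsilon S_\varepsilon\Xi$ step by step. Unpacking the definitions gives
$(\cI_\varepsilon S_\varepsilon\Xi)(y_\Sigma + t\nu(y_\Sigma)) = \tfrac{1}{\sqrt{\varepsilon}}\Xi(y_\Sigma, t/\varepsilon)$
for $(y_\Sigma,t)\in\Sigma\times(-\varepsilon,\varepsilon)$, and multiplication by $v_\varepsilon$ produces the factor $\sign V(y_\Sigma+\tfrac{\beta}{\varepsilon}t\nu(y_\Sigma))\cdot\sqrt{\beta/\varepsilon}\,|V(y_\Sigma+\tfrac{\beta}{\varepsilon}t\nu(y_\Sigma))|^{1/2}$. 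Applying $R(\lambda)$ and converting the integral over $\Omega_\varepsilon$ into one over $\Sigma\times(-\varepsilon,\varepsilon)$ via Proposition~\ref{corollary_integration_tube}(ii) produces the Jacobian $\det(1-tW(y_\Sigma))$. Finally, the substitution $s=t/\varepsilon$ turns the $t$-integral into one over $(-1,1)$, and the factor $\sqrt{\beta/\varepsilon}\cdot\varepsilon/\sqrt{\varepsilon}=\sqrt\beta$ combines with $|V|^{1/2}$ to reconstruct $u(y_\Sigma,s)$, so the signs combine to $v(y_\Sigma,s)$ as in \eqref{def_u_v}. Comparing with \eqref{def_A_eps} then yields $A_\varepsilon(\lambda)=R(\lambda)v_\varepsilon\cI_\varepsilon S_\varepsilon$. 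The identity for $C_\varepsilon(\lambda)$ is proved analogously by applying $u_\varepsilon$ to $R(\lambda)f$, pulling back by $\cI_\varepsilon^{-1}$ and rescaling by $S_\varepsilon^{-1}$; the same accounting of the factors $\sqrt{\beta/\varepsilon}$ and $\sqrt{\varepsilon}$ produces the weight $u(x_\Sigma,t)=|\beta V(x_\Sigma+\beta t\nu(x_\Sigma))|^{1/2}$ in \eqref{def_C_eps}. Boundedness of $A_\varepsilon(\lambda)$ and $C_\varepsilon(\lambda)$ is then immediate from the factorizations, since $R(\lambda)$ is bounded on $L^2(\dR^d)$, $u_\varepsilon$ and $v_\varepsilon$ are bounded multiplication operators, $S_\varepsilon$ is unitary by construction, and $\cI_\varepsilon,\cI_\varepsilon^{-1}$ are bounded by Hypothesis~\ref{hypothesis_hypersurface}(b) and Proposition~\ref{corollary_integration_tube}.

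For item (ii), the factorization $B_\varepsilon(\lambda)=S_\varepsilon^{-1}\cI_\varepsilon^{-1}u_\varepsilon R(\lambda)v_\varepsilon\cI_\varepsilon S_\varepsilon$ is obtained by concatenating the computations from (i): one simply replaces the outer $R(\lambda)$-step in the computation of $A_\varepsilon(\lambda)$ by $u_\varepsilon R(\lambda)$ followed by $\cI_\varepsilon^{-1}$ and $S_\varepsilon^{-1}$, or alternatively writes $B_\varepsilon(\lambda)$ as the analogue of $C_\varepsilon(\lambda)\cdot\text{(same integrand machinery as $A_\varepsilon$)}$. For the norm estimate, the computation of $\|\cI_\varepsilon\Phi\|_{L^2(\Omega_\varepsilon)}^2$ via Proposition~\ref{corollary_integration_tube}(ii) together with Hypothesis~\ref{hypothesis_hypersurface}(b) gives the uniform bounds $\|\cI_\varepsilon\|\leq\sqrt{1+\eta}$ and $\|\cI_\varepsilon^{-1}\|\leq 1/\sqrt{1-\eta}$, independent of $\varepsilon\in(0,\beta]$. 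Combining with the unitarity of $S_\varepsilon$ yields
\begin{equation*}
\|B_\varepsilon(\lambda)\|\leq \sqrt{\tfrac{1+\eta}{1-\eta}}\,\|u_\varepsilon R(\lambda) v_\varepsilon\|.
\end{equation*}
Given $M\in(0,1)$, pick $M'\in(0,1)$ with $M'\sqrt{(1+\eta)/(1-\eta)}\leq M$ and apply Proposition~\ref{prop_first_resolvent_formula}(ii) with $M'$ to obtain $\lambda_{M'}<0$ such that $\|u_\varepsilon R(\lambda)v_\varepsilon\|\leq M'$ for every $\lambda<\lambda_{M'}$ and every $\varepsilon\in(0,\beta]$; setting $\lambda_M:=\lambda_{M'}$ gives the claimed bound on $\|B_\varepsilon(\lambda)\|$. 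Existence of $(1-B_\varepsilon(\lambda))^{-1}$ as a bounded everywhere defined operator then follows from a Neumann series argument since $\|B_\varepsilon(\lambda)\|\leq M<1$.

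I do not anticipate a genuine obstacle; the lemma is essentially a bookkeeping statement that repackages the Birman--Schwinger factorization of $H_\varepsilon$ from Proposition~\ref{prop_first_resolvent_formula} into coordinates adapted to the tubular neighbourhood $\Sigma\times(-1,1)$. The only place requiring some care is to keep track of the three small parameters appearing together ($\beta/\varepsilon$ from the scaling of $V$, $1/\sqrt{\varepsilon}$ from $S_\varepsilon$, and $\det(1-tW)$ from the change of variables) and to see that they collapse into the $\varepsilon$-uniform objects $u(x_\Sigma,t)$, $v(x_\Sigma,s)$ and $\det(1-\varepsilon sW(y_\Sigma))$ in \eqref{def_ABC_eps}; this also explains why the factors of $\eta$ surviving in the norm bound on $B_\varepsilon(\lambda)$ are $\varepsilon$-independent, which is exactly what is needed for the convergence analysis in the following subsection.
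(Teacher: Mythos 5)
Your proposal is correct and follows essentially the same route as the paper: unpack the definitions of $\cI_\varepsilon S_\varepsilon$, $v_\varepsilon$, $u_\varepsilon$, apply Proposition~\ref{corollary_integration_tube}(ii) to convert the $\Omega_\varepsilon$-integral into one over $\Sigma\times(-\varepsilon,\varepsilon)$, rescale $s=\varepsilon r$, and recognize the resulting kernels as those of $A_\varepsilon(\lambda)$, $B_\varepsilon(\lambda)$, $C_\varepsilon(\lambda)$; then combine the $\varepsilon$-uniform bounds on $\|\cI_\varepsilon\|$, $\|\cI_\varepsilon^{-1}\|$, unitarity of $S_\varepsilon$, and Proposition~\ref{prop_first_resolvent_formula}(ii) to control $\|B_\varepsilon(\lambda)\|$, with the Neumann series giving the invertibility. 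The only cosmetic difference is that you make the constant $\sqrt{(1+\eta)/(1-\eta)}$ explicit and separate out a smaller $M'$, whereas the paper simply observes that the $\cI_\varepsilon$-factors are uniformly bounded and that $\|u_\varepsilon R(\lambda)v_\varepsilon\|$ can be made arbitrarily small; the logic is identical.
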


\begin{proof}
  Let $\varepsilon \in (0, \beta]$ and
  $\lambda \in (-\infty, 0)$ be fixed.

  (i) 
  We show the two formulae
  $A_\eps(\lm) = R(\lm) v_\eps \cI_\eps S_\varepsilon$
  and
  $C_{\eps}(\lm) = S_{\varepsilon}^{-1} \cI_\eps^{-1} u_{\varepsilon} R(\lm)$. 
  The operators $A_\eps(\lm)$ and $C_\eps(\lm)$ are then automatically bounded
  and everywhere defined, as the operators $S_\eps$, $\cI_\eps$
  as well as their inverses, $u_\eps$, $v_\eps$
  and $R(\lm)$
  have these properties.
  
  Let $\Xi \in L^2(\Sigma \times (-1, 1))$. 
  By the definitions of $\cI_\eps$ and
  $S_\eps$ it holds that
  \begin{equation*} 
    (\cI_\eps S_\eps \Xi)(y_{\Sigma} + s \nu(y_{\Sigma})) 
    = 
    \frac{1}{\sqrt{\eps}}
    \Xi\left(y_{\Sigma}, \frac{s}{\eps}\right).
  \end{equation*}
  Furthermore, the definitions of $v_\eps$ and $v$ (see \eqref{def_u_v})
  imply for any $h \in L^2(\Omega_\varepsilon)$ that
  \begin{equation*} 
    \begin{split}
      (v_\eps h)\left( y_{\Sigma} + s \nu(y_{\Sigma}) \right) 
      &= 
      \frac{1}{\sqrt{\eps}} 
      \,\sign V\left( y_{\Sigma} + \frac{\beta}{\eps} s \nu(y_{\Sigma}) \right) 
      \left|\beta V\left( y_{\Sigma} + \frac{\beta}{\eps}s\nu(y_{\Sigma})
      \right)\right|^{1/2} h\left( y_{\Sigma} + s \nu(y_{\Sigma}) \right) \\
      &= 
      \frac{1}{\sqrt{\eps}} v \left( y_{\Sigma}, \frac{s}{\eps} \right) h\left( y_{\Sigma} + s \nu(y_{\Sigma}) \right) 
    \end{split}
  \end{equation*}
  for a.e.
  $(y_{\Sigma}, s) \in \Sigma \times (-\eps, \eps)$. 
  Using the transformation 
  $\Omega_{\varepsilon} \ni y = y_{\Sigma} + s \nu(y_{\Sigma}) 
  \mapsto (y_{\Sigma}, s) \in \Sigma \times (-\eps, \eps)$
  and Proposition \ref{corollary_integration_tube} we find that
  \begin{equation} \label{label_lemma_transformation1}
    \begin{split}
      \big( R(\lm) v_{\varepsilon}
          \mathcal{I}_\varepsilon S_{\varepsilon} \Xi \big)(x)
          &= \int_{\mathbb{R}^d} G_{\lambda}(x - y) (v_{\varepsilon}
          \mathcal{I}_\varepsilon S_{\varepsilon} \Xi)(y) \mathrm{d} y \\
      &= \int_{\Sigma} \int_{-\varepsilon}^{\varepsilon} G_{\lambda}(x - y_{\Sigma} - s \nu(y_{\Sigma})) 
          \frac{1}{\varepsilon} (v \Xi)\left(y_{\Sigma}, \frac{s}{\varepsilon}\right) \det(1 - s W(y_{\Sigma})) 
          \mathrm{d} s \mathrm{d} \sigma(y_{\Sigma}) \\
      &= \int_{\Sigma} \int_{-1}^1 G_{\lambda}(x - y_{\Sigma} - \varepsilon r \nu(y_{\Sigma})) v\left( y_{\Sigma}, r \right) 
          \det(1 - \varepsilon r W(y_{\Sigma})) \Xi\left(y_{\Sigma}, r \right) \mathrm{d} r \mathrm{d} \sigma(y_{\Sigma}) \\
      &= (A_{\varepsilon}(\lambda) \Xi)(x).
    \end{split}
  \end{equation}
  Since this is true for a.e. $x \in \dR^d$, the 
  first formula of item (i) is shown.
	
  Next, we show the assertion on $C_\varepsilon(\lambda)$.
  A simple calculation yields that
  \begin{equation} \label{label_lemma_transformation3}
  \begin{split}
      (S_\eps^{-1} \cI_\eps^{-1} 
          u_\eps g)(x_{\Sigma}, t) 
      &
      = 
      \sqrt{\eps} (
      \cI_\eps^{-1} u_\eps g)(x_{\Sigma}, \eps t) \\
      &= 
      \sqrt{\eps} \cdot 
      \frac{1}{\sqrt{\eps}} 
      \left| \beta V\left( x_{\Sigma} +
      \frac{\beta}{\eps} \eps t \nu(x_{\Sigma}) \right) \right|^{1/2} 
      g(x_{\Sigma} + \eps t \nu(x_{\Sigma})) \\
      &= 
      u(x_{\Sigma}, t) g(x_{\Sigma} + \varepsilon t \nu(x_{\Sigma}))
  \end{split}
  \end{equation}
  for any $g \in L^2(\dR^d)$  and 
  a.e. $(x_{\Sigma}, t) \in \Sigma \times (-1, 1)$.
  Using \eqref{label_lemma_transformation3}, we find that
  \begin{equation*}
  \begin{split}
      (S_{\eps}^{-1} \cI_\varepsilon^{-1} 
      u_{\varepsilon} R(\lm) f)(x_{\Sigma}, t) 
      &= u(x_{\Sigma}, t) \int_{\dR^d} 
      G_{\lambda}(x_{\Sigma} + \varepsilon t \nu(x_{\Sigma}) - y) 
      f(y) \mathrm{d} y = (C_\varepsilon(\lambda) f)(x_\Sigma, t)
    \end{split}
  \end{equation*}
  for all $f \in L^2(\dR^d)$ 
  and a.e. $(x_{\Sigma}, t) \in \Sigma \times (-1, 1)$. 
  Thus, the second formula is shown as well.
			
  (ii) 
  Using \eqref{label_lemma_transformation3} and
  \eqref{label_lemma_transformation1} we get that
  \begin{equation*}
    \begin{split}
      \big( S_{\varepsilon}^{-1} \mathcal{I}_\varepsilon^{-1}
          u_{\varepsilon} R(\lm) v_{\varepsilon} \mathcal{I}_\varepsilon 
          S_{\varepsilon} \Xi\big)(x_{\Sigma}, t) 
      &= u(x_{\Sigma}, t) \big(R(\lambda) v_{\varepsilon} 
          \mathcal{I}_\varepsilon S_{\varepsilon} \Xi\big) (x_{\Sigma} + \varepsilon t \nu(x_{\Sigma})) \\
      &= u(x_{\Sigma}, t) \int_{\Sigma} \int_{-1}^1 
          G_{\lambda}(x_{\Sigma} + \varepsilon t \nu(x_{\Sigma}) - y_{\Sigma} - \varepsilon r \nu(y_{\Sigma})) 
          v\left( y_{\Sigma}, r \right) \\
      &\qquad \qquad \qquad \qquad \qquad \qquad \cdot 
          \det(1 - \varepsilon r W(y_{\Sigma})) \Xi\left(y_{\Sigma}, r \right) 
          \mathrm{d} r \mathrm{d} \sigma(y_{\Sigma}) \\
      &= (B_{\varepsilon}(\lambda) \Xi)(x_{\Sigma}, t).
    \end{split}
  \end{equation*}
  Therefore, we obtain the desired formula in item (ii).
  In particular, this formula implies
  that $B_{\varepsilon}(\lambda)$ is bounded 
  and everywhere defined. 
  
  Let $M \in (0, 1)$ be fixed. 
  Note that $\cI_\varepsilon$ 
  and 
  $\cI_\varepsilon^{-1}$ 
  are uniformly bounded for all 
  $\varepsilon \in (0, \beta]$ by 
  Proposition~\ref{corollary_integration_tube} and 
  Hypothesis~\ref{hypothesis_hypersurface} (b).
  Furthermore, recall that $S_\varepsilon$ is unitary.
  Hence, using Proposition \ref{prop_first_resolvent_formula} (ii) we find that
  \begin{equation*}
    \begin{split}
      \| B_\varepsilon(\lambda) \| &= 
      \big\| S_{\varepsilon}^{-1} \mathcal{I}_\varepsilon^{-1} 
      	  u_{\varepsilon}R(\lm) v_{\varepsilon}
         \mathcal{I}_\varepsilon S_{\varepsilon} \big\| 
       \leq \big\| S_{\varepsilon}^{-1} \big\|\cdot \big\| \mathcal{I}_\varepsilon^{-1} \big\| \cdot 
         \big\| u_{\varepsilon} R(\lm) v_{\varepsilon} \big\| 
         \cdot \| \mathcal{I}_\varepsilon \| \cdot \| S_{\varepsilon} \| \leq M
     \end{split}
  \end{equation*}
  for all $\lambda < 0$ with $\vert\lambda\vert$ sufficiently large 
  and all  $\varepsilon \in (0, \beta]$.
\end{proof}

After all these preparatory steps it is simple to transform 
the resolvent formula for $H_\varepsilon$ 
from Proposition~\ref{prop_first_resolvent_formula} 
into another one which is more convenient for the 
convergence analysis.

\begin{prop} \label{theorem_resolvent_formula}
  Let $H_\varepsilon$ be defined as in \eqref{def_H_eps}
  and let $A_{\varepsilon}(\lambda), B_{\varepsilon}(\lambda)$ and 
  $C_{\varepsilon}(\lambda)$ be as in \eqref{def_ABC_eps}.
   Then there exists a $\lambda_0 < 0$ such that 
  $(-\infty, \lambda_0) \subset \rho(H_\varepsilon)$ and
  \begin{equation*}
  	(H_\varepsilon - \lambda)^{-1} = 
  	R(\lm) + 
    A_{\varepsilon}(\lambda) \big(1 - B_{\varepsilon}(\lambda)\big)^{-1}
    C_{\varepsilon}(\lambda)
  \end{equation*}
  for all $\varepsilon \in (0, \beta]$ and $\lambda < \lambda_0$.
\end{prop}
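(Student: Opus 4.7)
The plan is to assemble Proposition~\ref{prop_first_resolvent_formula} and Lemma~\ref{lemma_ABC_eps} into the claimed formula by straightforward algebraic manipulation; no new analytic input is needed.

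First I would fix some $M \in (0,1)$ and use Proposition~\ref{prop_first_resolvent_formula}~(ii) and Lemma~\ref{lemma_ABC_eps}~(ii) to pick a common $\lambda_0 < 0$ such that, uniformly in $\varepsilon \in (0, \beta]$ and $\lambda < \lambda_0$, both $\|u_\varepsilon R(\lambda) v_\varepsilon\| \leq M < 1$ and $\|B_\varepsilon(\lambda)\| \leq M < 1$ hold. By a Neumann series argument, each of the operators $1 - u_\varepsilon R(\lambda) v_\varepsilon$ and $1 - B_\varepsilon(\lambda)$ then has a bounded, everywhere defined inverse, and Proposition~\ref{prop_first_resolvent_formula}~(i) gives $\lambda \in \rho(H_\varepsilon)$ together with
\[
  (H_\varepsilon - \lambda)^{-1}
  = R(\lambda) + R(\lambda)\, v_\varepsilon \bigl(1 - u_\varepsilon R(\lambda) v_\varepsilon\bigr)^{-1} u_\varepsilon R(\lambda).
\]

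Next I would exploit the similarity structure encoded in Lemma~\ref{lemma_ABC_eps}. Setting $P_\varepsilon := \mathcal{I}_\varepsilon S_\varepsilon$, which is bounded and bijective with bounded inverse $P_\varepsilon^{-1} = S_\varepsilon^{-1} \mathcal{I}_\varepsilon^{-1}$ by Proposition~\ref{corollary_integration_tube}, Hypothesis~\ref{hypothesis_hypersurface}~(b) and the unitarity of $S_\varepsilon$, part~(ii) of Lemma~\ref{lemma_ABC_eps} reads
\[
  B_\varepsilon(\lambda) = P_\varepsilon^{-1}\, \bigl[\,u_\varepsilon R(\lambda) v_\varepsilon\,\bigr]\, P_\varepsilon,
\]
whence $1 - B_\varepsilon(\lambda) = P_\varepsilon^{-1}\bigl(1 - u_\varepsilon R(\lambda) v_\varepsilon\bigr) P_\varepsilon$, and consequently
\[
  \bigl(1 - B_\varepsilon(\lambda)\bigr)^{-1} = P_\varepsilon^{-1}\, \bigl(1 - u_\varepsilon R(\lambda) v_\varepsilon\bigr)^{-1} P_\varepsilon.
\]
Part~(i) of the same lemma moreover expresses $A_\varepsilon(\lambda) = R(\lambda) v_\varepsilon P_\varepsilon$ and $C_\varepsilon(\lambda) = P_\varepsilon^{-1} u_\varepsilon R(\lambda)$.

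Finally I would substitute these three identities into $A_\varepsilon(\lambda)\bigl(1 - B_\varepsilon(\lambda)\bigr)^{-1} C_\varepsilon(\lambda)$ and cancel the two factors $P_\varepsilon P_\varepsilon^{-1}$ that appear in the middle, which yields
\[
  A_\varepsilon(\lambda)\bigl(1 - B_\varepsilon(\lambda)\bigr)^{-1} C_\varepsilon(\lambda) = R(\lambda)\, v_\varepsilon \bigl(1 - u_\varepsilon R(\lambda) v_\varepsilon\bigr)^{-1} u_\varepsilon R(\lambda).
\]
Combined with the resolvent formula from the first step, this gives the asserted identity. There is no genuine obstacle: the content of the proposition is already packaged inside the two preceding results, so the only care required is to choose $\lambda_0$ as the minimum of the two thresholds furnished by Proposition~\ref{prop_first_resolvent_formula}~(ii) and Lemma~\ref{lemma_ABC_eps}~(ii), so that invertibility of both $1 - u_\varepsilon R(\lambda) v_\varepsilon$ and $1 - B_\varepsilon(\lambda)$ holds uniformly in $\varepsilon \in (0,\beta]$.
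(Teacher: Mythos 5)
Your proposal is correct and takes essentially the same route as the paper: choose $\lambda_0$ as the minimum of the two thresholds from Proposition~\ref{prop_first_resolvent_formula}~(ii) and Lemma~\ref{lemma_ABC_eps}~(ii), then substitute the similarity identities from Lemma~\ref{lemma_ABC_eps} into the Birman--Schwinger resolvent formula and cancel the intertwining factors. The only cosmetic difference is that you abbreviate $\mathcal{I}_\varepsilon S_\varepsilon$ as $P_\varepsilon$, which is a mild notational convenience, not a different argument.
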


\begin{proof}
  Let the operators $u_\varepsilon$, $v_\varepsilon$, $\cI_\varepsilon, S_\varepsilon$
  be defined as in \eqref{def_u_eps}, \eqref{def_v_eps},
  \eqref{def_embedding} and \eqref{def_S_eps}, respectively.  
  Choose $\lambda_0 < 0$ such that 
  $1 -u_{\varepsilon} R(\lambda) v_{\varepsilon}$
  and $1 - B_\varepsilon(\lambda)$ are boundedly invertible for
  any $\lambda < \lambda_0$ and all $\varepsilon \in (0, \beta]$
  (such a $\lambda_0$ exists by 
  Proposition~\ref{prop_first_resolvent_formula}
  and Lemma \ref{lemma_ABC_eps}).
  Then, it holds by Proposition \ref{prop_first_resolvent_formula}
  and Lemma \ref{lemma_ABC_eps} that
  \begin{equation*}
    \begin{split}
      (H_\varepsilon- \lambda)^{-1} 
          &= R(\lambda) + R(\lambda) v_{\varepsilon} 
          \left( 1 - u_{\varepsilon} R(\lambda) v_{\varepsilon} \right)^{-1} 
          u_{\varepsilon} R(\lambda) \\
      &= R(\lambda) + A_{\varepsilon}(\lambda) S_{\varepsilon}^{-1} \mathcal{I}_\varepsilon^{-1} 
          \left( 1 - \mathcal{I}_\varepsilon S_{\varepsilon} B_{\varepsilon}(\lambda) S_{\varepsilon}^{-1} 
          \mathcal{I}_\varepsilon^{-1} \right)^{-1} 
          \mathcal{I}_\varepsilon S_{\varepsilon} C_{\varepsilon}(\lambda) \\
      &= R(\lambda) + A_{\varepsilon}(\lambda) 
          \left( 1 - B_{\varepsilon}(\lambda) \right)^{-1} C_{\varepsilon}(\lambda),
    \end{split}
  \end{equation*}
  which proves the statement of this proposition.
\end{proof}

\subsection{Proof of Theorem \ref{theorem_convergence}} \label{subsection_convergence}

In this section we prove Theorem~\ref{theorem_convergence}. The argument essentially reduces to special the case $Q=0$, 
which will be treated first.
In this situation  we have to show that the family of operators $H_\varepsilon$ 
converges in the norm resolvent sense
to $A_{\delta, \alpha}$, as $\varepsilon \rightarrow 0+$.
Because of Theorem~\ref{theorem_delta_op} and Proposition \ref{theorem_resolvent_formula} it is sufficient to investigate
the convergence of $A_{\varepsilon}(\lambda), B_{\varepsilon}(\lambda)$ and 
$C_{\varepsilon}(\lambda)$ separately. This is done in the following lemma. 
In the proof we make use of the integral estimates in Appendix~\ref{appa} and we frequently use the Schur test for integral operators; cf. 
\cite[Example III 2.4]{kato} or \cite[Satz 6.9]{weidmann1}.

\begin{lem} \label{proposition_convergence}
  Let $\lambda \in (-\infty, 0)$ and
  let $A_{\varepsilon}(\lambda)$, $B_{\varepsilon}(\lambda)$ and $C_{\varepsilon}(\lambda)$ 
  be defined as in \eqref{def_ABC_eps}. 
  Then there exists a constant  $c = c(d, \lambda, \Sigma, V) > 0$ 
  such that for all sufficiently small $\eps > 0$ the following estimates hold:
  \[
  		\| A_\eps(\lm) - A_0(\lm) \|,\, \| C_\eps(\lm) - C_0(\lm) \|  
  		\leq c \eps \big( 1 + |\ln \varepsilon| \big)^{1/2},
  		\qquad
	    \| B_{\varepsilon}(\lambda) - B_0(\lambda) \| \leq 
   		 c \varepsilon \big( 1 + |\ln \varepsilon| \big).
  \]
\end{lem}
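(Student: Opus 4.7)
My plan is to treat the three differences in parallel. For each $T_\varepsilon \in \{A_\varepsilon(\lambda), B_\varepsilon(\lambda), C_\varepsilon(\lambda)\}$, the kernel at $\varepsilon = 0$ is obtained from the one at $\varepsilon > 0$ by removing the transverse shifts $\varepsilon t\nu(x_\Sigma)$, $\varepsilon s\nu(y_\Sigma)$ inside the argument of $G_\lambda$ and by replacing the Jacobian factor $\det(1-\varepsilon s W(y_\Sigma))$ by $1$. I would therefore split each kernel difference into a \emph{shift} part of the form
\[
\bigl[G_\lambda\bigl(x_\Sigma + \varepsilon t \nu(x_\Sigma) - y_\Sigma - \varepsilon s \nu(y_\Sigma)\bigr) - G_\lambda(x_\Sigma - y_\Sigma)\bigr]\,v(y_\Sigma,s)\det(1-\varepsilon s W(y_\Sigma))
\]
(with the natural modifications for $A_\varepsilon$ and $C_\varepsilon$, where one of the surface-plus-shift arguments is replaced by a free $x\in\mathbb{R}^d$), and a \emph{Jacobian} part $G_\lambda(\cdot)\,v(y_\Sigma,s)\,[\det(1-\varepsilon s W(y_\Sigma)) - 1]$. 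Since the eigenvalues of $W$ are uniformly bounded on $\Sigma$ by Proposition~\ref{proposition_eigenvalues_Weingarten_map}, one has $|\det(1-\varepsilon s W(y_\Sigma)) - 1|\le C\varepsilon$ uniformly on $\Sigma\times(-1,1)$, so the Jacobian part contributes exactly the right $O(\varepsilon)$ factor, the remaining integrals against $|G_\lambda|$ being already controlled by Propositions~\ref{proposition2} and~\ref{proposition_appendix_hat_1}.

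For the shift part I would use the mean-value pointwise bound
\[
|G_\lambda(z+h) - G_\lambda(z)| \leq |h|\sup_{\theta\in[0,1]} |\nabla G_\lambda(z+\theta h)|, \qquad |h| = O(\varepsilon),
\]
in order to extract one factor of $\varepsilon$ and reduce the problem to bounding integrals of $|\nabla G_\lambda|$ over either $\Sigma\times(-1,1)$ or $\mathbb{R}^d$ (with the other variable acting as parameter). Then I would apply the Schur test
\[
\|T_\varepsilon - T_0\| \leq \Bigl(\sup_x\int |K(x,y)|\,d\mu(y)\Bigr)^{1/2}\Bigl(\sup_y\int |K(x,y)|\,d\mu'(x)\Bigr)^{1/2}.
\]
For $A_\varepsilon - A_0$, one of the two Schur integrals is taken over $\mathbb{R}^d$; since $|\nabla G_\lambda|$ is uniformly $L^1(\mathbb{R}^d)$ (thanks to the exponential decay of $K_{d/2-1}$ at infinity and the mild singularity $|z|^{1-d}$ at the origin), it contributes $O(\varepsilon)$ without any logarithm. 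The other integral is over $\Sigma\times(-1,1)$ and integrates $|z|^{1-d}$ against a $(d-1)$-dimensional surface with a transverse offset of order $\varepsilon$; the model computation $\int_{|u|\le R}(|u|^2+\varepsilon^2)^{(1-d)/2}\,du = O(1+|\ln\varepsilon|)$ produces the characteristic logarithmic factor, and the whole integral is $O(\varepsilon(1+|\ln\varepsilon|))$. The geometric mean $\varepsilon(1+|\ln\varepsilon|)^{1/2}$ gives the stated bound for $A_\varepsilon$, and an identical argument with the roles of source and target swapped handles $C_\varepsilon$. For $B_\varepsilon - B_0$ \emph{both} Schur integrals are over $\Sigma\times(-1,1)$, each absorbs a factor $1+|\ln\varepsilon|$, and their geometric mean is the claimed $\varepsilon(1+|\ln\varepsilon|)$.

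The main obstacle is the rigorous justification of these singular integral estimates \emph{uniformly in $\varepsilon$} and over the possibly unbounded surface $\Sigma$. To handle this I would reduce each integral, via the finite atlas $\{\varphi_i, U_i, V_i\}_{i\in I}$ and a subordinate partition of unity, to a Euclidean integral on $U_i\subset\mathbb{R}^{d-1}$; the quantitative bi-Lipschitz bound of Hypothesis~\ref{hypothesis_hypersurface}(c) combined with Definition~\ref{definition_hypersurface}(e) allows one to compare the distance $|x-y_\Sigma-\varepsilon s\nu(y_\Sigma)|$ to $(|u-v|^2+\varepsilon^2 s^2)^{1/2}$ uniformly in $i$, so that the surface integrals truly reduce to the model computation above. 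These reductions are precisely what is collected in Appendix~\ref{appa}; once they are in hand, the lemma follows from the kernel decomposition and Schur test sketched above.
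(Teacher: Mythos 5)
Your proposal is correct and follows essentially the same route as the paper: the split into a Jacobian part (controlled by the uniform bound $|\det(1-\varepsilon sW)-1|\le C\varepsilon$) and a shift part (controlled via the mean-value inequality for $G_\lambda$ and the $L^1$-bound on $\nabla G_\lambda$), combined with the Schur test, is exactly the paper's decomposition $A_\varepsilon-\widehat A_\varepsilon$ plus $\widehat A_\varepsilon-A_0$, and your accounting of which Schur factor carries the logarithm matches Propositions~\ref{proposition_appendix_hat_1}, \ref{proposition2}, \ref{proposition3}, \ref{proposition_appendix_1} and \ref{proposition_appendix_3} and Lemma~\ref{proposition_difficult}. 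The chart-by-chart reduction using Hypothesis~\ref{hypothesis_hypersurface}(c) that you sketch for the singular surface integral is precisely what is carried out in Lemma~\ref{proposition_difficult}.
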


\begin{proof}
  First, we provide an estimate related to the Weingarten map $W(y_{\Sigma})$.
  Let $\mu_1(y_{\Sigma}), \dots, \mu_{d-1}(y_{\Sigma})$ be the eigenvalues of the
  matrix of $W(y_{\Sigma})$, which are independent of the 
  parametrization of $\Sigma$ and uniformly bounded on $\Sigma$;
  cf. Proposition \ref{proposition_eigenvalues_Weingarten_map}. 
  This implies for $s \in (-1, 1)$ and $\varepsilon \in (0, 1)$ 
  the existence of a constant $c_1 > 0$ such that
  \begin{equation} \label{label_eigenvalues_Weingarten}
      D_\eps(y_\Sigma,s) 
      := \left| 1 - \det(1 - \varepsilon s W(y_{\Sigma})) \right|
      = \left| 1 - \prod_{k=1}^{d-1} (1 - \varepsilon s \mu_k(y_{\Sigma})) \right| 
      \leq c_1 \varepsilon,\quad y_\Sigma \in\Sigma.
  \end{equation}
  Fix $\lambda \in (-\infty, 0)$. 
  In order to find an estimate for 
  $\| A_\eps(\lambda) - A_0(\lambda) \|$, we introduce for
  $\varepsilon > 0$ the auxiliary integral operator 
  $\wh{A}_{\eps}(\lambda)\colon L^2(\Sigma \times (-1, 1)) \rightarrow L^2(\dR^d)$ by
  \begin{equation*}
  	\big(\wh{A}_{\varepsilon}(\lambda) \Xi\big)(x) 
	:= 
    \int_{\Sigma} \int_{-1}^1 
    G_{\lambda}(x - y_{\Sigma} - \varepsilon s \nu(y_{\Sigma})) 
    v(y_{\Sigma}, s) \Xi(y_{\Sigma}, s) \dd s \dd \sigma(y_{\Sigma}).
  \end{equation*} 
  The quantities
  $\| A_{\varepsilon}(\lambda) - \wh{A}_{\varepsilon}(\lambda) \|$ 
  and 
  $\| \wh{A}_{\varepsilon}(\lambda) - A_0(\lambda)\|$
  are estimated separately for sufficiently small $\eps >0$.
  To estimate $\| \wh{A}_{\varepsilon}(\lambda) - A_{\varepsilon}(\lambda)\|$
  we introduce the functions $\wt F_{1,\eps}^A\colon \dR^d\arr [0, \infty]$ and
  $\wh F_{1,\eps}^A\colon \Sigma\times (-1,1)\arr [0, \infty]$ by
   \begin{align*}
      \wt F_{1,\eps}^A(x) & := \int_{\Sigma} \int_{-1}^1 \big|
          G_{\lambda}(x - y_{\Sigma} - \varepsilon s \nu(y_{\Sigma})) v(y_{\Sigma}, s) \big|
          D_\eps(y_\Sigma,s)
          \dd s \dd \sigma(y_{\Sigma}),\\
	  \wh F_{1,\eps}^A(y_{\Sigma},s) & := \int_{\dR^d} \big|
	  G_{\lambda}(x - y_{\Sigma} - \varepsilon s \nu(y_{\Sigma})) v(y_{\Sigma}, s) \big|
          D_\eps(y_\Sigma,s)\dd x.
	\end{align*}
  With the aid of \eqref{label_eigenvalues_Weingarten},
  Proposition~\ref{proposition3}\,(i) and Proposition \ref{proposition_appendix_hat_1} (ii),
  we find that
  \[
		\sup \wt F_{1, \eps}^A \le c_{A,1}\eps
		\qquad\text{and}\qquad
		\sup \wh F_{1, \eps}^A \le c_{A,1}\eps
  \]	
  with a constant $c_{A, 1} = c_{A,1}(d, \lm, \Sigma, V) > 0$.
  Using these bounds and the Schur test
  we obtain
  \begin{equation}\label{eq:estA1}
    \begin{split}
    \big\| \wh{A}_{\varepsilon}(\lambda) - A_{\varepsilon}(\lambda) \big\|^2  
	\le \big(\sup \wt F_{1, \eps}^A\big)\cdot\big( \sup \wh F_{1, \eps}^A\big)
	\le c_{A,1}^2\eps^2.
    \end{split}
  \end{equation}
  To estimate $\|\wh{A}_{\varepsilon}(\lambda) - A_0(\lambda)\|$,  
  we introduce the functions $\wt F_{2,\eps}^A\colon \dR^d\arr [0, \infty]$ and
  $\wh F_{2,\eps}^A\colon \Sigma\times (-1,1)\arr [0, \infty]$ by
  \begin{align*}
	\wt F_{2,\eps}^A (x) 
	& := 
       \int_{\Sigma} \int_{-1}^1 
        \big| \big(
        G_{\lambda}(x - y_{\Sigma} - \varepsilon s \nu(y_{\Sigma})) - 
        G_{\lambda}(x - y_{\Sigma})\big) v(y_{\Sigma}, s) \big| 
        \dd s \dd \sigma(y_{\Sigma}),\\
        \wh F_{2,\eps}^A (y_{\Sigma}, s) & := 
         \int_{\dR^d} \big| \big(
          G_{\lambda}(x - y_{\Sigma} - \varepsilon s \nu(y_{\Sigma})) - G_{\lambda}(x - y_{\Sigma})\big)
          v(y_{\Sigma}, s) \big|  \dd x.
  \end{align*}      
  We find with the help of 
  Proposition \ref{proposition_appendix_3} (i) and Proposition \ref{proposition_appendix_1} 
  that
  \[
		\sup \wt F_{2, \eps}^A \le c_{A,2}\eps( 1 + |\ln \eps|)
		\qquad\text{and}\qquad
		\sup \wh F_{2,\eps}^A \le c_{A,2}\eps
  \]	
  with a constant $c_{A, 2} = c_{A,2}(d, \lm, \Sigma, V) > 0$.
  Using these bounds and the Schur test we get
  \begin{equation}\label{eq:estA2}
      \big\| \wh{A}_{\varepsilon}(\lambda) - A_0(\lambda) \big\|^2 \le
      \big(\sup \wt F_{2,\eps}^A\big)\cdot \big( \sup \wh F_{2,\eps}^A\big)
      \le c_{A,2}^2\eps^2(1+ |\ln\eps|).
  \end{equation}
  Combining the estimates~\eqref{eq:estA1}, \eqref{eq:estA2}
  and applying the triangle inequality for the operator norm, we conclude that there exists a 
  constant $c_A = c_A(d, \lambda, \Sigma, V) > 0$ such that  
  \begin{equation*}
  \begin{split}
    \big\| A_{\varepsilon}(\lambda) - A_0(\lambda) \big\| 
        &\leq 
     \big\| A_{\varepsilon}(\lambda) - \wh{A}_{\varepsilon}(\lambda) \big\| 
     +
     \big\| \wh{A}_{\varepsilon}(\lambda) - A_0(\lambda) \big\| \\
     &\leq  c_{A,1}\eps + 
     c_{A,2}\eps(1+|\ln \eps|)^{1/2}\le
     c_A \varepsilon 
     \big( 1 + |\ln \eps| \big)^{1/2}.
     \end{split}
  \end{equation*}

  Next, we analyze the convergence of $B_{\varepsilon}(\lambda)$. 
  For this purpose, we introduce for $\varepsilon > 0$ the auxiliary operator 
  $\wh{B}_{\varepsilon}(\lambda)\colon L^2(\Sigma \times (-1, 1)) 
          \rightarrow L^2(\Sigma \times (-1, 1))$ as 
  \begin{equation*}
   \big(\wh{B}_{\varepsilon}(\lambda) \Xi\big)(x_{\Sigma}, t) 
          := u(x_{\Sigma}, t) \int_{\Sigma} \int_{-1}^1 
          G_{\lambda}(x_{\Sigma} + \varepsilon t \nu(x_{\Sigma}) - y_{\Sigma} - \varepsilon s \nu(y_{\Sigma}))
          v(y_{\Sigma}, s) \Xi(y_{\Sigma}, s) \mathrm{d} s \mathrm{d} \sigma(y_{\Sigma}).
   \end{equation*}
   As in the analysis of convergence of $A_\eps(\lm)$, we separately prove the estimates 
  for  $\| B_{\varepsilon}(\lambda) - \wh{B}_{\varepsilon}(\lambda)\|$
  and $\| \wh{B}_{\varepsilon}(\lambda) - B_0(\lambda)\|$, which yield then the claimed convergence result.
  To estimate  $\| B_{\varepsilon}(\lambda) - \wh{B}_{\varepsilon}(\lambda)\|$, we introduce the  functions
  $\wt F_{1,\eps}^B, \wh F_{1,\eps}^B\colon \Sigma\times (-1,1)\arr [0, \infty]$ by
  \begin{align*}
	\wt F_{1, \eps}^B (x_\Sigma,t) &:=  
	\int_\Sigma \int_{-1}^1 \big|
	u(x_\Sigma,t) G_\lm(x_\Sigma + \eps t \nu(x_\Sigma) - y_\Sigma -\eps s\nu(y_\Sigma))v(y_\Sigma,s) \big|
       D_\eps(y_\Sigma,s)\dd s \dd \s(y_\Sigma),\\
	\wh F_{1, \eps}^B (y_\Sigma,s) & := \int_\Sigma \int_{-1}^1 \big|
	u(x_\Sigma,t) G_\lm(x_\Sigma + \eps t \nu(x_\Sigma) - y_\Sigma -\eps s\nu(y_\Sigma))v(y_\Sigma,s) \big|
       D_\eps(y_\Sigma,s) \dd t \dd \s(x_\Sigma). 
  \end{align*}
  Using  \eqref{label_eigenvalues_Weingarten} and Proposition~\ref{proposition3}\,(ii),	we get 
  \[
  \sup \wt F_{1,\eps}^B \le c_{B,1}\eps\qquad\text{and}\qquad
		\sup \wh F_{1,\eps}^B \le c_{B,1}\eps
	\]
    with a constant $c_{B, 1} = c_{B,1}(d, \lm, \Sigma, V)  >0$.	
  Employing these bounds and again applying the Schur test we obtain that
  \begin{equation}\label{eq:estB1}
   	\big\| B_{\varepsilon}(\lambda) - \wh{B}_{\varepsilon}(\lambda) \big\|^2 
	\le \big (  \sup \wt F_{1,\eps}^B \big )\cdot
	\big (  \sup \wh F_{1,\eps}^B \big)\le c_{B,1}^2\eps^2.
  \end{equation}
  To estimate $\| \wh{B}_{\varepsilon}(\lambda) - B_0(\lambda) \|$ we introduce
  the auxiliary function 
  $F_{2,\eps}^B \colon \Sigma\times(-1,1)\arr [0, \infty]$
  as
  \[
	F_{2,\eps}^B(x_\Sigma,t) := \int_{\Sigma} \int_{-1}^1 \big| u(x_{\Sigma}, t) \big(
         G_{\lambda}(x_{\Sigma} + \varepsilon t \nu(x_{\Sigma}) - y_{\Sigma} - \varepsilon s \nu(y_{\Sigma})) - G_{\lambda}(x_{\Sigma} - y_{\Sigma})\big)v(y_\Sigma, y)\big|
       \dd  s \dd \s(y_\Sigma).                
  \]
  Using that the absolute value of the integral kernel of $\wh{B}_{\varepsilon}(\lambda) - B_0(\lambda)$
  is symmetric and applying  Proposition~\ref{proposition_appendix_3} (ii), we obtain
  with the help of the Schur test that
  \begin{equation}\label{eq:estB2}
  \big\| \wh{B}_{\varepsilon}(\lambda) - B_0(\lambda) \big\| \le \sup F_{2,\eps}^B
      \le c_{B, 2} \eps \big( 1 + |\ln \varepsilon| \big)
  \end{equation}
  with a constant $c_{B, 2} = c_{B,2}(d, \lm, \Sigma, V) >0$. 
  Putting together the estimates in \eqref{eq:estB1}, \eqref{eq:estB2} and employing the triangle inequality, we eventually 
  deduce that there is a constant $c_B = c_B(d, \lambda, \Sigma, V) > 0$ such that
  \begin{equation*}
    \begin{split}
      \| B_{\varepsilon}(\lambda) - B_0(\lambda) \|  &\leq \big\| B_{\varepsilon}(\lambda) - \wh{B}_{\varepsilon}(\lambda) \big\| 
          + \big\| \wh{B}_{\varepsilon}(\lambda) - B_0(\lambda) \big\| \\
         & \leq c_{B, 1} \varepsilon + c_{B, 2} \varepsilon \big( 1 + |\ln \varepsilon| \big) \leq 
      c_B \eps \big( 1 + |\ln \varepsilon| \big).
    \end{split}
  \end{equation*}

  Finally, we analyze the convergence of $C_{\varepsilon}(\lambda)$. 
  Using again the Schur test, Proposition \ref{proposition_appendix_3}~(i)
  and Proposition~\ref{proposition_appendix_1} we find
  \begin{equation*}
    \begin{split}
      \big\| C_{\varepsilon}(\lambda) - C_0(\lambda) \big\|^2 &\leq \sup_{y \in \mathbb{R}^d} 
          \int_{\Sigma} \int_{-1}^1 \left|u(x_{\Sigma}, t) \big(
          G_{\lambda}(x_{\Sigma} + \varepsilon t \nu(x_{\Sigma}) - y) - G_{\lambda}(x_{\Sigma} - y)\big) 
          \right| \mathrm{d} t \mathrm{d} \sigma(x_{\Sigma}) \\
      &\qquad\cdot \sup_{(x_{\Sigma}, t) \in \Sigma \times (-1, 1)} \int_{\mathbb{R}^d}  \left|u(x_{\Sigma}, t) 
          \big(G_{\lambda}(x_{\Sigma} + \varepsilon t \nu(x_{\Sigma}) - y) - G_{\lambda}(x_{\Sigma} - y)\big) 
          \right| \mathrm{d} y\\
      &
           \le c_C\eps^2\big( 1 + |\ln \varepsilon| \big)
    \end{split}
  \end{equation*}
  with a constant $c_C = c_C(d, \lambda, \Sigma, V) > 0$. Setting $c := \max\{c_A, c_B, c_C \}$, 
  the claimed result of this lemma follows.
\end{proof}

Theorem~\ref{theorem_delta_op}, Proposition~\ref{theorem_resolvent_formula} 
and Lemma~\ref{proposition_convergence} contain the essential 
ingredients to prove Theorem \ref{theorem_convergence}. In the first two steps of the proof the special case $Q=0$ is discussed, the general
situation is treated in the last step. 

\begin{proof}[Proof of Theorem \ref{theorem_convergence}]
  For $\lambda < 0$ and $\varepsilon \in [0, \beta]$ the operators 
  $A_{\varepsilon}(\lambda)$, $B_{\varepsilon}(\lambda)$ and 
  $C_{\varepsilon}(\lambda)$ are defined as in~\eqref{def_ABC_eps}.

  {\it Step 1:} First, we prove that $(1 - B_0(\lambda))^{-1}$
  exists and is bounded and everywhere defined for $\lm < 0$
  with $\vert\lambda\vert$ sufficiently large. Let $M \in (0, 1)$ be fixed and choose 
  $\lambda_M < 0$ such that $\| B_\varepsilon(\lambda) \| \leq M$ 
  for any $\lambda < \lambda_M$ and all $\varepsilon \in (0, \beta]$; 
  recall that such a $\lambda_M$
  exists by Lemma~\ref{lemma_ABC_eps}\,(ii). Hence,
  the operators $(1 - B_\varepsilon(\lambda))^{-1}$
  are bounded and everywhere defined for $\lambda < \lambda_M$ and $\varepsilon \in (0, \beta]$, 
  and it holds  
  \begin{equation} \label{equation_main_theorem1}
    \big\| (1 - B_\varepsilon(\lambda))^{-1} \big\| \leq \frac{1}{1 - M}.
  \end{equation}
  Because of~\eqref{equation_main_theorem1} and
  Lemma~\ref{proposition_convergence}, 
  we can apply \cite[Theorem IV 1.16]{kato},
  which yields that $1 - B_0(\lambda)$ is boundedly invertible. Moreover, 
  for $\lambda < \lambda_M$ we conclude from \cite[Theorem IV 1.16]{kato} and \eqref{equation_main_theorem1} that
  \begin{equation*}
  \begin{split}
      \big\| (1 - B_\varepsilon(\lambda))^{-1} - (1 - B_0(\lambda))^{-1} \big\|
        &\leq \frac{  \| (1 - B_\varepsilon(\lambda))^{-1} \|^2}
        {1 - \| B_\varepsilon(\lambda) - B_0(\lambda) \| \| (1 - B_\varepsilon(\lambda))^{-1} \|} \| B_\varepsilon(\lambda) - B_0(\lambda) \|\\
        &\leq c_1 \varepsilon \big( 1 + |\ln \varepsilon| \big)
  \end{split}
  \end{equation*}
  holds for all $\varepsilon > 0$ sufficiently small and a constant 
  $c_1 = c_1(d,\lm,\Sigma,V) > 0$.  
  
   {\it Step 2:} From
  Proposition~\ref{theorem_resolvent_formula},
  Lemma~\ref{proposition_convergence} and the estimates in Step 1 we obtain
  \begin{equation*}
    \begin{split}
      \big\| (H_\varepsilon - \lambda)^{-1} &- \big( R(\lm) + 
          A_0(\lambda) (1 - B_0(\lambda))^{-1} C_0(\lambda) \big) \big\| \\
       &= \big\| A_{\varepsilon}(\lambda) (1 - B_{\varepsilon}(\lambda))^{-1} C_{\varepsilon}(\lambda) - 
           A_0(\lambda) (1 - B_0(\lambda))^{-1} C_0(\lambda) \big\| 
           \leq c_2 \varepsilon \big( 1 + |\ln \varepsilon| \big)
    \end{split}
  \end{equation*}
  with a constant $c_2 = c_2(d,\lm,\Sigma,V) > 0$. 
   It remains to verify that
  \begin{equation}\label{okok}
     R(\lm) + A_0(\lambda) (1 - B_0(\lambda))^{-1} C_0(\lambda) 
        = (A_{\delta, \alpha} - \lambda)^{-1},
  \end{equation}
  where $\alpha\in L^\infty(\Sigma)$ is defined as in the theorem,
  \begin{equation*}
  \alpha(x_{\Sigma}) = \int_{-\beta}^\beta V(x_{\Sigma} + s \nu(x_{\Sigma})) \dd s.
  \end{equation*}
  In order to show \eqref{okok} let $u$ and $v$ be given by \eqref{def_u_v},
  introduce the bounded operators
  $\wh{U}\colon  L^2(\Sigma) \rightarrow L^2(\Sigma \times (-1, 1))$ and 
  $\wh{V}\colon  L^2(\Sigma \times (-1, 1)) \rightarrow L^2(\Sigma)$ by
  \begin{equation*}
    (\wh{U} \xi) (x_{\Sigma}, t) := u(x_\Sigma, t) \xi(x_{\Sigma})\quad\text{and}\quad
	\big(\wh{V} \Xi\big)(x_{\Sigma}) := 
    \int_{-1}^1 v(x_{\Sigma}, s) \Xi(x_{\Sigma}, s) \dd s
  \end{equation*}
  defined
  almost everywhere, and note that $\wh{V} \wh{U}$ is the multiplication
  operator with $\alpha$ in $L^2(\Sigma)$. 
  Furthermore, recall the definition of the bounded 
  operators $\gamma(\lambda)$, $M(\lambda)$ and the formula for 
  $\gamma(\lambda)^*$ from \eqref{def_gamma_field_Weyl_function};
  cf. Theorem \ref{theorem_delta_op}.
  Then, we observe that 
  \begin{equation*}
    \begin{split}
      (A_0(\lambda) \Xi)(x) &= \int_{\Sigma} \int_{-1}^1 G_{\lambda}(x - y_{\Sigma}) v(y_{\Sigma}, s) 
          \Xi(y_{\Sigma}, s) \mathrm{d} s \mathrm{d} \sigma(y_{\Sigma}) \\
      &= \int_{\Sigma} G_{\lambda}(x - y_{\Sigma}) \bigg( \int_{-1}^1 v(y_{\Sigma}, s) 
          \Xi(y_{\Sigma}, s)\mathrm{d} s \bigg) \mathrm{d} \sigma(y_{\Sigma}) 
          = \big(\gamma(\lambda) \wh{V} \Xi \big)(x)
    \end{split}
  \end{equation*}
  for any $\Xi \in L^2(\Sigma \times (-1, 1))$ 
  and a.e.  $x \in \dR^d$. Thus, we conclude 
  $A_0(\lambda) = \gamma(\lambda) \wh{V}$. 
  In a similar way, one finds that
  \begin{equation*}
    \begin{split}
      (B_0(\lambda) \Xi)(x_{\Sigma}, t) &= u(x_{\Sigma}, t) \int_{\Sigma} \int_{-1}^1 
          G_{\lambda}(x_{\Sigma} - y_{\Sigma}) v(y_{\Sigma}, s) \Xi(y_{\Sigma}, s) 
          \mathrm{d} s \mathrm{d} \sigma(y_{\Sigma}) \\
      &= u(x_{\Sigma}, t) \int_{\Sigma} G_{\lambda}(x_{\Sigma} - y_{\Sigma}) 
          \bigg( \int_{-1}^1 v(y_{\Sigma}, s) \Xi(y_{\Sigma}, s) \dd s \bigg) 
          \mathrm{d} \sigma(y_{\Sigma}) 
      = \big(\wh{U} M(\lambda) \wh{V} \Xi \big)(x_{\Sigma}, t)
    \end{split}
  \end{equation*}
  for any $\Xi \in L^2(\Sigma \times (-1, 1))$ 
  and a.e. $(x_\Sigma,t)\in \Sigma \times (-1, 1)$, 
  which implies $B_0(\lambda) = \wh{U} M(\lambda) \wh{V}$.
  Finally, one sees that
  \begin{equation*}
    \begin{split}
      (C_0(\lambda) f)(x_\Sigma,t) &= u(x_{\Sigma}, t) \int_{\mathbb{R}^d} 
          G_{\lambda}(x_{\Sigma} - y) f(y) \mathrm{d} y = 
          \bigl( \wh{U} \gamma(\lambda)^* \bigr) (x_{\Sigma}, t)
    \end{split}
  \end{equation*}
  for all $f \in L^2(\mathbb{R}^d)$ and a.e.
  $(x_{\Sigma}, t) \in \Sigma \times (-1, 1)$, which implies 
  $C_0(\lambda) = \wh{U} \gamma(\lambda)^*$. 
  Thus, we get
  \begin{equation}\label{usethis}
    \lim_{\eps\arr 0} (H_\varepsilon - \lambda)^{-1} = R(\lambda) +
       A_0(\lambda) \big(1 - B_0(\lambda)\big)^{-1} C_0(\lambda)  
       = R(\lambda) + \gamma(\lambda) \wh{V} \big(1 - \wh{U} M(\lambda) \wh{V}\big)^{-1} \wh{U} \gamma(\lambda)^*.
  \end{equation}
  Since $\alpha=\wh{V} \wh{U}$ we conclude from
  \begin{equation*}
    \wh{V} \Big(1 - \wh{U} M(\lambda) \wh{V}\big)^{-1}\wh U =
     \big(1 - \wh{V} \wh{U} M(\lambda) \big)^{-1} \wh{V}\wh U=\big(1 - \alpha M(\lambda) \big)^{-1} \alpha,
  \end{equation*}
  and \eqref{usethis} together with Theorem~\ref{theorem_delta_op} that
  \begin{equation*}
       \lim_{\varepsilon \rightarrow 0+} (H_\varepsilon - \lambda)^{-1} 
      = R(\lm) + \gamma(\lambda) \big( 1 - \alpha M(\lambda)\big)^{-1} \alpha \gamma(\lambda)^*  
	= 
	(A_{\delta, \alpha} - \lambda)^{-1}.
  \end{equation*}
  This completes the proof of Theorem~\ref{theorem_convergence} in the case $Q=0$.

  {\it Step 3:} Let $Q \in L^{\infty}(\mathbb{R}^d)$ be real valued
  and let $\lambda \in \mathbb{R}$ be such that 
  $\lambda < \lambda_M - \| Q \|_{L^\infty}$.
  Then $\lambda$ is smaller than the lower bound of
  the operators $A_{\delta, \alpha} + Q$ and $H_\varepsilon + Q$ 
  for all $\varepsilon \in (0, \beta]$. 
  Using the formula
  \begin{equation*}
    (H_\varepsilon + Q - \lambda)^{-1} = 
    \Big(1 -    (H_\varepsilon + Q - \lambda)^{-1}Q\Big)    
    (H_\eps  - \lambda)^{-1},
  \end{equation*}
  we compute
  \begin{equation*}
    \begin{split}
      (H_\varepsilon + Q - \lambda)^{-1} & - (A_{\delta, \alpha} + Q - \lambda)^{-1}
      \\ 
       & = \Big[
       (H_\varepsilon + Q - \lambda)^{-1}(A_{\delta,\alpha}  +Q -\lambda)
       -1\Big](A_{\delta,\alpha}  +Q -\lambda)^{-1}\\
     & =  
     \Big[(H_\varepsilon + Q - \lambda)^{-1} - 
     \big(1 -    (H_\varepsilon + Q - \lambda)^{-1}Q\big)
     (A_{\delta,\alpha} - \lm)^{-1}\Big]
     (A_{\delta,\alpha} - \lambda)(A_{\delta,\alpha} + Q - \lambda)^{-1}
     \\
     &=
     \big(1 - (H_\varepsilon + Q - \lambda)^{-1}Q\big)
     \big[
     (H_\varepsilon  - \lambda)^{-1} - (A_{\delta, \alpha}  - \lambda)^{-1}  \big]
     \bigl(1-Q(A_{\delta,\alpha}  +Q -\lambda)^{-1}\bigr).
    \end{split}
  \end{equation*}
  This implies
  \begin{equation*}
    \begin{split}
      \big\| (H_\varepsilon &+ Q - \lambda)^{-1} - (A_{\delta, \alpha} + Q - \lambda)^{-1} \big\| \\
      &\leq \big\|1 - (H_\varepsilon + Q - \lambda)^{-1}Q \big\| \cdot \big\| (H_\varepsilon - \lambda)^{-1}
         - (A_{\delta, \alpha} - \lambda)^{-1} \big\| \cdot \big\| 1-Q(A_{\delta,\alpha}  +Q -\lambda)^{-1} \big\|.
    \end{split}
  \end{equation*}
  Since the norm 
  $\big\| 1 - (H_\varepsilon + Q - \lambda)^{-1}Q \big\|$ is uniformly bounded 
  in $\varepsilon$, the result of Step~2 yields the desired claim.
  \end{proof}

\subsection{Consequences for the spectra of $H_\eps$ and $A_{\delta, \alpha}$} 
\label{subsection_consequences}

In this section we discuss how Theorem~\ref{theorem_convergence} can be used to
deduce certain spectral properties of $H_\eps$ 
from those of $A_{\delta, \alpha}$ and vice versa. 

First, the approximation result in Theorem~\ref{theorem_convergence}
combined with the results in~\cite{BEL14_JPA, EI01, EK03} on the existence
of geometrically induced bound states for Schr\"odinger operators
with $\delta$-interactions supported on curves and surfaces
can be used to show the existence of such bound states 
also for the operators with regular potentials 
(in the approximating sequence) provided that the potential
well is sufficiently ``narrow'' and ``deep''. This application
is motivated by Open Problem 7.1 in the review paper~\cite{E08}.
In order to formulate the respective claims we introduce several
geometric notions.
We say that the hypersurface $\Sigma\subset\dR^d$, $d \ge 2$, is a 
\emph{local deformation} of the hypersurface $\wt\Sigma\subset\dR^d$,
if $\Sigma \ne \wt\Sigma$ and if there exists a compact set $K\subset\dR^d$ such that
$\Sigma\setminus K = \wt\Sigma\setminus K$.
Furthermore, we introduce for $\theta \in ( 0, \pi/2)$ 
the \emph{broken line} $\cL\subset\dR^2$ and the \emph{infinite circular conical surface} $\cC\subset\dR^3$ by
\[
	\cL := \big\{(x,y) \in\dR^2\colon y = \cot(\theta) |x|\big\},
	\qquad
	\cC := 
	\big\{(x,y,z) \in\dR^3\colon 
	z = \cot(\theta) \sqrt{x^2 + y^2}\big\}. 
\]

\begin{prop}\label{proposition_bound_states}
	Let $\Sigma\subset\dR^d$ be 
	as in Definition~\ref{definition_hypersurface} such that
	Hypothesis~\ref{hypothesis_hypersurface} holds and 
	assume that $\Sigma$ satisfies one of the following assumptions. 
	\begin{myenum}
		\item [\rm (a)] 
		$d = 2$ and $\Sigma$ is a local deformation of a straight line;
		\item [\rm (b)] 
		$d = 2$ and $\Sigma$ is a local deformation of a broken line;
		\item [\rm (c)] 
		$d = 3$ and $\Sigma$ is a $C^4$-smooth
		local deformation of a plane, which admits a global natural
		parametrization in the sense of \cite[Section 2-3, Definition 2]{dC};
		\item [\rm (d)] 
		$d = 3$ and $\Sigma$ is a local deformation of 
		an infinite circular conical surface.
	\end{myenum}
	Let the layer $\Omega_\beta\subset\dR^d$ be as in~\eqref{def_tube}
	and let $V \in L^\infty(\dR^d)$ be real valued 
	with $\supp V \subset \Omega_\beta$. 
	Assume that $\alpha \in L^\infty(\Sigma)$ 
	associated to $V$ as in Theorem~\ref{theorem_convergence}
	is a positive constant, which in case {\rm (c)} is assumed to be sufficiently large. 
	Then for all sufficiently small $\eps > 0$ 
	the self-adjoint operator $H_\eps$ in~\eqref{def_H_eps} 
	has a non-empty discrete spectrum below the threshold of its essential 
	spectrum. 
\end{prop}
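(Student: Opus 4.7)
The plan is to transfer the existence of bound states for $A_{\delta, \alpha}$ — guaranteed in each of the four geometric settings by the references cited in the proposition and subsequent literature on $\delta$-interactions on broken lines and cones — to the regular Schr\"odinger operators $H_\eps$ via the norm resolvent convergence of Theorem~\ref{theorem_convergence}. Fix $\alpha>0$ constant (sufficiently large in case (c)); the hypothesis then yields at least one discrete eigenvalue $\lambda_\ast < E_0 := \inf\sess(A_{\delta,\alpha})$ of $A_{\delta, \alpha}$.

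The first task is to identify $\sess(H_\eps)$ with the essential spectrum of a simpler reference operator. Let $\wt\Sigma$ denote the underlying straight line, broken line, plane, or circular cone of which $\Sigma$ is a local deformation, and let $\wt V_\eps$ be the scaled potential obtained by the prescription~\eqref{hohoho} when $\Sigma$ is replaced by $\wt\Sigma$. Since $\Sigma$ and $\wt\Sigma$ coincide outside a compact set $K$, the difference $V_\eps - \wt V_\eps$ is uniformly bounded and supported in a fixed compact neighborhood of $K$ for all small $\eps$; hence it is relatively $(-\Delta)$-compact, and Weyl's theorem yields
\begin{equation*}
\sess(H_\eps) \;=\; \sess\bigl(-\Delta - \wt V_\eps\bigr) \;=:\; [E_\eps,\infty).
\end{equation*}

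The second task is to prove $E_\eps\to E_0$ as $\eps\to 0+$. In cases (a) and (c) the reference surface is translation-invariant in its tangent directions, so a partial Fourier transform decomposes $-\Delta - \wt V_\eps$ into a one-parameter family of transverse one-dimensional Schr\"odinger operators on $\dR$; these converge in the norm resolvent sense to the classical one-dimensional $\delta$-Hamiltonian of strength $\alpha$, whose ground state energy equals $-\alpha^2/4$, and the same value equals $E_0$ by the analogous separation of variables for $A_{\delta,\alpha}$ on $\wt\Sigma$. In cases (b) and (d), since at infinity $\wt\Sigma$ is asymptotic to a straight line or a plane respectively, both $E_0$ and $E_\eps$ still equal $-\alpha^2/4$ by a Persson-type characterisation of the bottom of the essential spectrum, combined with the transverse one-dimensional convergence from the straight case.

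With these ingredients the conclusion follows from a standard consequence of norm resolvent convergence applied to Theorem~\ref{theorem_convergence} with $Q=0$: for every bounded open interval $U\subset\dR$ whose endpoints lie in $\rho(A_{\delta,\alpha})$ the spectral projections $E_{H_\eps}(U)$ converge in norm to $E_{A_{\delta,\alpha}}(U)$. Choosing $U$ a small neighborhood of $\lambda_\ast$ contained in $(-\infty, E_0)$ produces, for all sufficiently small $\eps$, an eigenvalue $\lambda_\eps\in\sigma(H_\eps)$ with $\lambda_\eps\to\lambda_\ast$, and the convergence $E_\eps \to E_0$ forces $\lambda_\eps < \inf\sess(H_\eps)$, hence $\lambda_\eps\in\sd(H_\eps)$. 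The main obstacle is the second task — uniform control of $E_\eps$ in the geometrically non-trivial cases (b) and (d), where no global translation invariance is available and the Persson analysis must be combined carefully with the one-dimensional transverse convergence.
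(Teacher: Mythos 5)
Your overall strategy — use norm resolvent convergence from Theorem~\ref{theorem_convergence} to transfer bound states of $A_{\delta,\alpha}$ to $H_\eps$ via convergence of spectral projections, and separately show that the bottom of the essential spectrum of $H_\eps$ approaches that of $A_{\delta,\alpha}$ — matches the paper's scheme in its two endpoints. But your middle step, the identification of $\inf\sess(H_\eps)$, takes a detour that the paper avoids and that you yourself flag as incomplete. You propose to compare $H_\eps$ with a reference operator $-\Delta-\wt V_\eps$ built over the undeformed model surface $\wt\Sigma$, invoke Weyl's theorem, and then compute $\sess(-\Delta-\wt V_\eps)$ by separation of variables (cases (a), (c)) or a Persson argument (cases (b), (d)). That is a lot of machinery, and indeed the Persson step for the broken line and the cone is a genuine gap: one would need uniform-in-$\eps$ control of the infimum over test functions supported outside compacta, together with the transverse one-dimensional convergence, and this is not worked out. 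There is also a minor wrinkle in the Weyl step — $\|V_\eps-\wt V_\eps\|_\infty\sim\eps^{-1}$ is not uniformly bounded, so one must argue relative compactness for each fixed $\eps$ rather than uniformly, though that does not by itself break the argument.

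The paper sidesteps all of this. It does not compute $\sess(H_\eps)$ at all. Instead, it records three facts: (1) in all four cases $\sess(A_{\delta,\alpha})=[-\alpha^2/4,\infty)$ by the cited references; (2) Proposition~\ref{prop_first_resolvent_formula} gives a lower bound for $H_\eps$ that is uniform in $\eps\in(0,\beta]$; and (3) $H_\eps\to A_{\delta,\alpha}$ in norm resolvent sense. Then a single abstract result (Weidmann, Satz~9.24\,(a)) — norm resolvent convergence of uniformly lower-semibounded self-adjoint operators implies convergence of the infima of the essential spectra — gives $\inf\sess(H_\eps)\to-\alpha^2/4$ directly. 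The rest of your proof (choose $(a,b)\subset(-\infty,-\alpha^2/4)$ with endpoints in $\rho(A_{\delta,\alpha})$ and nonempty discrete spectrum inside, use convergence of $E_{H_\eps}((a,b))\to E_{A_{\delta,\alpha}}((a,b))$, and observe $b<\inf\sess(H_\eps)$ for small $\eps$) is essentially what the paper does via Weidmann Satz~2.58\,(a) and 9.24\,(b). So to close the gap you identified, you should replace your reference-operator/Weyl/Persson computation of $\sess(H_\eps)$ by the abstract statement that norm resolvent convergence together with uniform semiboundedness controls $\inf\sess$; this both repairs cases (b) and (d) and makes the Fourier-decomposition analysis in cases (a) and (c) unnecessary.
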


\begin{proof}
	As there is no danger of confusion, we denote the value
	of the constant positive function $\aa\in L^\infty(\Sigma)$
	by $\aa$ as well.
	First, recall that by Theorem~\ref{theorem_convergence}
	the self-adjoint operators $H_\eps$
	converge in the norm resolvent sense 
	to the self-adjoint lower-semibounded operator 	$A_{\delta,\alpha}$
	as $\eps\arr 0+$. 	
	
	Next, in all the cases it is known that 
	$\sess(A_{\delta, \alpha}) = [-\alpha^2/4, \infty)$;
	\cf~\cite[Theorems 4.2 and 4.10]{BEL14_RMP} and~\cite[Theorem 3.3]{BEL14_JPA}. 	
	Moreover, Proposition~\ref{prop_first_resolvent_formula}
	implies that the operators $H_\eps$ 
	are bounded from below uniformly in $\eps \in (0,\beta]$.
	Hence, the result \cite[Satz 9.24\,(a)]{weidmann1} yields
	\begin{equation}\label{eq:ess}
		\inf\sess(H_\eps) \arr -\aa^2/4, \qquad\eps \arr 0+.
	\end{equation}	
	Moreover, in all the cases it is known that
	$\sd(A_{\delta, \alpha})\cap (-\infty,-\alpha^2/4) \neq \emptyset$;
	\cf \cite[Theorem 5.2]{EI01} for {\rm (a), (b)},
	\cite[Theorem 4.3]{EK03} for {\rm (c)} and \cite[Theorem 3.3]{BEL14_JPA} 
	for {\rm (d)}. Choose now a finite interval $(a,b)\subset (-\infty,-\aa^2/4)$
	with $a,b\in\rho(A_{\delta,\alpha})$ such that 
	$\sd(A_{\delta, \alpha})\cap (a,b) \ne \emptyset$.
	By~\eqref{eq:ess} we get for all sufficiently small $\eps > 0$ that
	$\inf\sess(H_\eps) > b$.

	Eventually, it follows from \cite[Satz 2.58\,(a) and Satz 9.24\,(b)]{weidmann1} 
	that for all sufficiently small $\eps > 0$ 
	the dimensions of the spectral subspaces of $H_\eps$ corresponding to $(a,b)$
	are equal to the dimension of the spectral subspace of $A_{\delta,\alpha}$ 			
	corresponding to the same interval; i.e.
	\begin{equation}\label{eq:subspaces}
		\dim \ran E_{H_\eps}((a,b)) = \dim\ran E_{A_{\delta,\alpha}}((a,b)) > 0.
	\end{equation} 
	Since $b < \inf\sess(H_\eps)$, this implies the claimed result.
\end{proof}
In the next proposition we show that Schr\"odinger operators with potential wells supported in curved periodic 
strips have gaps in their spectra under reasonable assumptions.
This proposition can be proven in a similar way as Proposition~\ref{proposition_bound_states}.
It suffices to combine a result~\cite[Corollary 2.2]{exner_yoshitomi_2001} on the existence of gaps 
in the negative spectrum for the Schr\"odinger operator with a strong $\delta$-interaction 
supported on a periodic curve with the main result of this paper and with standard
statements on spectral convergence.
Note that a similar idea was earlier used in a different,
albeit related context in \cite{Yo98}.
\begin{prop}
	Let $\Sigma \subset \dR^2$ be as in Definition~\ref{definition_hypersurface} 
	such that Hypothesis~\ref{hypothesis_hypersurface} holds. 
	Suppose that $\Sigma$ is parametrized via the tuple $\{\varphi,\dR,\dR^2\}$
	with $|\varphi'| \equiv 1$.
	For $\varphi(s)  = (\varphi_1(s),\varphi_2(s))$ define the signed curvature of 
	$\Sigma$ by $\kappa := \varphi_1'' \varphi_2' - \varphi_1' \varphi_2''$.
	Assume that $\Sigma$ is not a straight line and that
	$\kappa$ satisfies the following conditions. 
	\begin{myenum}	
		\item [\rm (a)] $\kappa \in C^2(\dR)$;
		\item [\rm (b)] $\kappa(s + L) = \kappa(s)$ for some $L > 0$ and all $s\in\dR$;
		\item [\rm (c)] $\int_0^L\kappa(s) \dd s = 0$;
		\item [\rm (d)] $\big|\int_0^T\kappa(s) \dd s\big| < \pi/2$ for all $T \in [0,L]$.
	\end{myenum}
	Let $\Omega_\beta\subset\dR^2$ be as in~\eqref{def_tube}
	and let $V \in L^\infty(\dR^d)$ be real valued with $\supp V \subset \Omega_\beta$.
	Assume that $\alpha \in L^\infty(\Sigma)$ associated to $V$ 
	as in Theorem \ref{theorem_convergence} is a sufficiently large positive constant. 		
	Then for all sufficiently small~$\eps > 0$ 
	the self-adjoint operator $H_\eps$ in~\eqref{def_H_eps}	has 
	at least one spectral gap in the interval $(-\infty, 0)$.
\end{prop}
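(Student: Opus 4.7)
The plan is to combine the norm resolvent convergence in Theorem~\ref{theorem_convergence} (applied with $Q=0$) with the existence of spectral gaps for the $\delta$-interaction Hamiltonian supported on periodic curves, as established in \cite[Corollary~2.2]{exner_yoshitomi_2001}. The hypotheses (a)--(d) on the signed curvature $\kappa$ are exactly those needed to invoke that result: for $\alpha > 0$ sufficiently large, the self-adjoint operator $A_{\delta,\alpha}$ in $L^2(\mathbb{R}^2)$ has at least one open gap inside its negative spectrum. Concretely, one fixes such a large $\alpha$ and obtains $a < b < 0$ with $[a,b] \subset \rho(A_{\delta,\alpha})$ and with $\sigma(A_{\delta,\alpha}) \cap (-\infty,a) \neq \emptyset$ and $\sigma(A_{\delta,\alpha}) \cap (b,0) \neq \emptyset$.

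Having fixed such $\alpha$, $a$, $b$, I would select points $\lambda_1 \in \sigma(A_{\delta,\alpha}) \cap (-\infty,a)$ and $\lambda_2 \in \sigma(A_{\delta,\alpha}) \cap (b,0)$, together with small open neighborhoods $U_1 \ni \lambda_1$ and $U_2 \ni \lambda_2$ whose closures remain in $(-\infty,a)$ and $(b,0)$ respectively. Theorem~\ref{theorem_convergence} with $Q=0$ yields that $H_\varepsilon \to A_{\delta,\alpha}$ in the norm resolvent sense. Standard spectral consequences of norm resolvent convergence — the same tools from \cite[Satz~2.58(a) and Satz~9.24]{weidmann1} that were already used in the proof of Proposition~\ref{proposition_bound_states} — then force, for all sufficiently small $\varepsilon > 0$, that $[a,b] \subset \rho(H_\varepsilon)$ and simultaneously that $\sigma(H_\varepsilon) \cap U_j \neq \emptyset$ for $j=1,2$. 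Hence $(a,b)$ is an open interval inside $\rho(H_\varepsilon) \cap (-\infty,0)$ flanked on both sides by points of $\sigma(H_\varepsilon)$, which is the desired spectral gap in $(-\infty,0)$.

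The only non-routine ingredient is to make sure that the $\delta$-operator to which \cite[Corollary~2.2]{exner_yoshitomi_2001} is applied coincides with $A_{\delta,\alpha}$ in the present paper. Since $\Sigma$ is parametrized by an arc-length $C^2$-curve with $|\varphi'|\equiv 1$ and $\kappa$ is periodic, the curve is a uniform $C^2$-hypersurface and Hypothesis~\ref{hypothesis_hypersurface} can be verified for all sufficiently small $\beta > 0$ (the periodicity of $\kappa$ makes all the geometric quantities uniformly controlled). After this identification, the constant coupling $\alpha$ obtained as the transverse average of $V$ is free to be tuned large by rescaling $V$, so there is no obstruction to invoking the Exner--Yoshitomi gap theorem. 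The main thing to be careful about, therefore, is verifying that the gap persists under norm resolvent perturbation in a \emph{two-sided} sense: the upper semi-continuity of spectrum (gap stays in the resolvent set) is automatic, but the lower semi-continuity (spectrum on both sides of the gap survives) requires that $\lambda_1$ and $\lambda_2$ lie in the spectrum of $A_{\delta,\alpha}$, which is exactly what the Exner--Yoshitomi result guarantees.
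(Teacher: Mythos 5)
Your proposal is correct and follows exactly the route the paper intends: the paper's own "proof" is a one-sentence sketch deferring to the argument of Proposition~\ref{proposition_bound_states}, i.e.\ combine Theorem~\ref{theorem_convergence} with the Exner--Yoshitomi gap result \cite[Corollary~2.2]{exner_yoshitomi_2001} and the Weidmann spectral-convergence statements, which is precisely what you do. The only refinement worth highlighting is that you correctly identify the two-sided nature of the problem (using upper semi-continuity to keep $[a,b]$ in the resolvent set and lower semi-continuity plus the Exner--Yoshitomi spectrum on both sides of the gap to ensure spectrum of $H_\varepsilon$ survives there), which is exactly the role of \cite[Satz~2.58(a), Satz~9.24(a)--(b)]{weidmann1} in the template proof.
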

One can also use Theorem~\ref{theorem_convergence}
to obtain spectral results for $A_{\delta, \alpha}$ from those of $H_\eps$.
To illustrate this idea, we show in the following example that 
for $d \geq 3$ the operator $A_{\delta, \alpha}$ does not have any negative bound states if
$\Sigma$ is a sphere with radius $R > 0$ 
and if $\alpha \in (0, \frac{d - 2}{R})$ is a constant.
The same result is also contained in~\cite[Theorem 2.3]{antoine_gesztesy_shabani_1987} 
for $d = 3$ and in~\cite[Theorem 4.1]{albeverio_kostenko_malamud_neidhardt_2013} for arbitrary 
$d \geq 3$. The proofs there are of a different nature than ours.
\begin{example}
	\rm Let $d \geq 3$, $R > 0$ and $\Sigma = \partial B(0, R)$.
	Let $\aa \in (0, \frac{d - 2}{R})$ be fixed.
	Let $q \in C^\infty_0(\dR_+)$ be non-negative 
	with $\supp q \subset (R/2, 3R/2)$ such that
	\begin{equation*}
		\int_{R/2}^{3R/2} q(r) \dd r = \aa.
	\end{equation*}
	%
	%
	Define the radially symmetric potential $V \in L^\infty(\dR^d)$ by 
	$V(x) := q(|x|)$, $x\in\dR^d$.
	Furthermore, define the scaled potentials $V_\eps$ and
	the operators $H_\eps$ 
	as in the Introduction with $\beta = R/2$
	and $\eps\in(0, R/2]$.
	By Theorem~\ref{theorem_convergence} the operators $H_\eps$ converge
	in the norm resolvent sense to $A_{\delta, \alpha}$ as $\eps \arr 0+$.
	For any $\eps \in (0, R/2]$ 
	the potential $V_\eps$ is also radially symmetric and 
	we get with the help of Lebesgue's dominated convergence
	theorem that for $q_\eps(|x|) := V_\eps(x)$
	\begin{equation*}
	\begin{split}
		\lim_{\eps \arr 0+}\int_0^\infty r q_\eps(r) \dd r 
		&= 
		\lim_{\eps \arr 0+}
		\int_{-\eps}^{\eps}
		\left(R + r\right) \frac{R}{2\eps} q\left(R + \frac{Rr}{2\eps} \right) \dd r\\
		& = 
		\lim_{\eps \arr 0+}
		\int_{R/2}^{3R/2} \left(\frac{2\eps z}{R} + R  - 2\eps \right) q(z) \dd z
		= 
		\aa R < d -2.
	\end{split}
	\end{equation*}
	Hence, using Bargmann's estimate~\cite[Theorem 3.2]{seto_1974}, 
	we obtain that 
	$H_\eps$ has no negative eigenvalues for all sufficiently
	small $\eps > 0$.
	Because of Theorem~\ref{theorem_convergence} 
	and \cite[Satz 9.24\,(a)]{weidmann1}
	it follows that $A_{\delta, \alpha}$ has no negative eigenvalues 
	as well. 
\end{example}

\begin{appendix}
\section{Estimates related to Green's function}\label{appa}

In this appendix we provide estimates for integrals that contain 
Green's function 
\begin{equation} \label{def_G_lambda2}
  G_{\lambda}(x) 
  = \frac{1}{(2 \pi)^{d/2}} \left( \frac{|x|}{\sqrt{-\lambda}} \right)^{1 - d/2} 
  K_{d/2 - 1} \left( \sqrt{-\lambda} |x| \right), 
  \quad x \in \mathbb{R}^d \setminus \{ 0 \},\quad d\geq 2,
\end{equation}
from \eqref{def_G_lambda}.
The estimates are formulated in a way such that they can be applied directly
in the main part of the paper. We note that some of the estimates below are known, but exact references are difficult to find in the mathematical literature.
Therefore, in order to keep this paper self-contained we also provide complete proofs of standard estimates as, e.g., in Proposition~\ref{proposition_asymptotics_G_lambda}.   
Throughout this appendix we assume that $\Sigma$ is an orientable $C^2$-hypersurface 
which satisfies Hypothesis \ref{hypothesis_hypersurface}, 
and we denote by $\nu$ the continuous unit 
normal vector field of $\Sigma$.

In the first preliminary proposition we discuss the asymptotics of $G_\lambda$
and $\nabla G_\lambda$.

\begin{prop} \label{proposition_asymptotics_G_lambda}
  Let $\lambda \in (-\infty, 0)$ and let $G_\lambda$
  be as in \eqref{def_G_lambda2}. Then there exists a constant $R = R(d) > 0$
  such that the following assertions hold.
  \begin{itemize}
    \item[\rm (i)] There is a constant $c = c(d)$ such that
    for all $|x| \leq \frac{R}{\sqrt{-\lambda}}$
    \begin{equation*}
      \big| G_\lambda(x) \big| \leq \begin{cases} c \big( 1 + \big| \ln \big( \sqrt{-\lambda} |x| \big) \big| \big), &\text{ if } d = 2, \\
                                                  c |x|^{2-d}, &\text{ if } d \geq 3, \end{cases}
    \end{equation*}
    holds. Moreover, there exists a constant $C = C(d)$ such that for all $|x| > \frac{R}{\sqrt{-\lambda}}$ it holds
    \begin{equation*}
      \big| G_\lambda(x) \big| \leq C \left( \frac{|x|}{\sqrt{-\lambda}} \right)^{1 - d/2} e^{-\sqrt{-\lambda} |x|}.
    \end{equation*}
    In particular, $G_\lambda \in L^1(\mathbb{R}^d)$.
    
    \item[\rm (ii)] The function $\mathbb{R}^d \setminus \{ 0 \} \ni x \mapsto G_\lambda(x)$
    is continuously differentiable. Furthermore, there exist constants $c = c(d, \lambda)$ and $C = C(d, \lambda)$ such that   
    for all $|x| \leq \frac{R}{\sqrt{-\lambda}}$
    \begin{equation*}
      \big| \nabla G_\lambda(x) \big| \leq c |x|^{1-d}
    \end{equation*}
    is true and for all $|x| > \frac{R}{\sqrt{-\lambda}}$ it holds
    \begin{equation*}
      \big| \nabla G_\lambda(x) \big| \leq C e^{-\sqrt{-\lambda} |x|}.
    \end{equation*}
    In particular, $ \nabla G_\lambda \in L^1(\mathbb{R}^d; \mathbb{C}^d)$.
   \end{itemize}
\end{prop}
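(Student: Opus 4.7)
The plan is to reduce both parts of the statement to the well-known asymptotics of the modified Bessel function $K_\nu$ at $0$ and at $+\infty$. Set $k := \sqrt{-\lambda}$, $r := |x|$, and $\nu := d/2-1 \ge 0$, so that
\begin{equation*}
  G_\lambda(x) \;=\; \frac{k^{\nu}}{(2\pi)^{d/2}}\, r^{-\nu}\, K_{\nu}(kr).
\end{equation*}
From~\cite{abramowitz_stegun} one has the small-argument asymptotics
$K_0(z) = -\ln z + O(1)$ as $z\to 0^+$ and $K_{\nu}(z) \sim \tfrac{\Gamma(\nu)}{2}(2/z)^{\nu}$ as $z\to 0^+$ for $\nu>0$, together with the large-argument asymptotic $K_\nu(z) \sim \sqrt{\pi/(2z)}\,e^{-z}$ as $z\to\infty$. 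These estimates are uniform in the relevant ranges, hence there exists $R = R(d)>0$ and $c = c(d) > 0$ such that for $0<z\le R$ one has $|K_0(z)|\le c(1+|\ln z|)$ and $|K_\nu(z)|\le c z^{-\nu}$ when $\nu>0$, while for $z\ge R$ one has $|K_\nu(z)|\le c z^{-1/2}e^{-z}$.

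Substituting $z = kr$ into the formula for $G_\lambda$ yields (i): for $kr \le R$ and $d=2$ one gets $|G_\lambda(x)|\le c(1+|\ln(kr)|)$; for $kr\le R$ and $d\ge 3$ the factor $r^{-\nu}(kr)^{-\nu}$ combines to $2^{\nu}r^{2-d}$, absorbing all $k$-dependence; for $kr> R$ the large-$z$ asymptotic gives $|G_\lambda(x)|\le C\, k^{\nu-1/2}r^{-\nu-1/2}e^{-kr}$, and bounding the extra $(kr)^{-1/2}$ by $R^{-1/2}$ produces the stated bound $C(r/k)^{1-d/2}e^{-kr}$ with $C=C(d)$. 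Integrability in $L^1(\dR^d)$ then follows by polar coordinates: near the origin the integrand is $r^{2-d}$ (or $|\ln r|$ when $d=2$) against the measure $r^{d-1}\dd r$, which is integrable at $0$; near infinity the exponential factor dominates any polynomial.

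For (ii), the key observation is the classical differentiation identity $\tfrac{d}{dz}\bigl[z^{-\nu}K_\nu(z)\bigr] = -z^{-\nu}K_{\nu+1}(z)$. Combined with the chain rule this gives
\begin{equation*}
  \nabla G_\lambda(x) \;=\; -\frac{k^{d/2}}{(2\pi)^{d/2}}\, r^{1-d/2}\, K_{d/2}(kr)\,\frac{x}{r},
\end{equation*}
so $|\nabla G_\lambda|$ has the same structural form as $|G_\lambda|$ but with the index shifted to $d/2\ge 1>0$; in particular no logarithm appears. Applying the small-argument bound of $K_{d/2}$ for $kr\le R$ gives $|\nabla G_\lambda(x)|\le c\, r^{1-d}$ after collecting powers of $k$ into $c=c(d,\lambda)$. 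For $kr>R$ the exponential bound of $K_{d/2}$ combined with $r^{1-d/2}\le (R/k)^{1-d/2}$ (valid since $1-d/2\le 0$) absorbs the polynomial prefactor into a constant depending on $d$ and $\lambda$, giving $|\nabla G_\lambda(x)|\le C e^{-kr}$. Integrability follows as before, the singularity $r^{1-d}$ at the origin being integrable against $r^{d-1}\dd r$.

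The only mildly delicate point will be keeping the $d=2$ case visible throughout part~(i), since $K_0$ behaves logarithmically while $K_\nu$ for $\nu>0$ behaves like a negative power; the split $|x|\lessgtr R/k$ and the separate treatment of $\nu=0$ versus $\nu>0$ handle this cleanly. Everything else is bookkeeping of constants.
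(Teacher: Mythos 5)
Your argument is correct. For part~(i) it is essentially the same as the paper's: both apply the small-argument asymptotics ($K_0(z)\sim -\ln z$ for $d=2$, $K_\nu(z)\sim\tfrac{\Gamma(\nu)}{2}(2/z)^\nu$ for $\nu>0$) and the large-argument asymptotics $K_\nu(z)\sim\sqrt{\pi/(2z)}\,e^{-z}$, and then substitute $z=\sqrt{-\lambda}\,|x|$ into \eqref{def_G_lambda2}. For part~(ii) you take a genuinely cleaner route. The paper computes $\nabla G_\lambda$ by the product rule, splits into the cases $d=2$ and $d\ge3$ (its equations \eqref{asymptotics_Bessel3} and \eqref{asymptotics_Bessel32}), and then controls $K'_{d/2-1}$ via the recurrence $K'_{d/2-1}(z)=-K_{d/2}(z)+\tfrac{d-2}{2z}K_{d/2-1}(z)$ (the sign of $K_{d/2}$ in the paper's \eqref{asymptotics_Bessel4} is off, but this is harmless after taking absolute values). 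You instead invoke the shift identity $\tfrac{d}{dz}\!\left[z^{-\nu}K_\nu(z)\right]=-z^{-\nu}K_{\nu+1}(z)$ (Abramowitz--Stegun 9.6.28), which collapses the gradient into the single closed form
\begin{equation*}
  \nabla G_\lambda(x)=-\frac{(\sqrt{-\lambda})^{d/2}}{(2\pi)^{d/2}}\,|x|^{1-d/2}K_{d/2}\bigl(\sqrt{-\lambda}\,|x|\bigr)\,\frac{x}{|x|},
\end{equation*}
valid uniformly in $d\ge2$. This sidesteps the two separate differentiation formulas and makes the absence of a logarithm in the gradient estimate transparent (the index $d/2\ge1$ is strictly positive even when $d=2$). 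Your bookkeeping of the constants is also correct: in the near-origin bound the powers of $\sqrt{-\lambda}$ in fact cancel, so $c$ can be taken to depend on $d$ alone, which is slightly sharper than what the proposition claims. No gaps.
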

\begin{proof}
  (i) Due to the asymptotic behavior of $K_{d/2-1}$, see \cite[Section 9.6 and 9.7]{abramowitz_stegun},
  there exist an $R = R(d) > 0$ and constants $C_1, C_2, C_3 > 0$ such that for any $p \in (0, R]$
  \begin{equation} \label{asymptotics_Bessel1}
    |K_{d/2-1}(p)| \leq \begin{cases} C_1 (1 + |\ln p|), & \text{ if }d=2, \\ 
    C_2 p^{1-d/2}, &\text{ if }d \geq 3, \end{cases}
  \end{equation}
  is satisfied and for any $p > R$
  \begin{equation} \label{asymptotics_Bessel2}
     |K_{d/2-1}(p)| \leq C_3 e^{-p}
  \end{equation}
  holds. Hence, the claimed asymptotics follow from the definition of $G_\lambda$. It is not difficult to check that 
  these asymptotics imply $G_\lambda\in L^1(\dR^d)$.

  (ii) Recall first that the mapping 
  $\mathbb{C} \setminus (-\infty, 0] \ni z \mapsto K_{\nu}(z)$ is holomorphic by \cite[Section 9.6]{abramowitz_stegun}.
  Therefore, $\mathbb{R}^d \setminus \{ 0 \} \ni x \mapsto G_\lambda(x)$ is continuously differentiable and
  we obtain
  \begin{equation} \label{asymptotics_Bessel3}
    \begin{split}
      \nabla G_\lambda(x) = \frac{1}{(2 \pi)^{d/2}} \frac{x}{|x|} \bigg(& \left(1-\frac{d}{2} \right) \frac{|x|^{-d/2}}{\sqrt{-\lambda}^{1-d/2}} 
          K_{d/2 - 1} \left( \sqrt{-\lambda} |x| \right) \\
      &+ \sqrt{-\lambda} \left( \frac{|x|}{\sqrt{-\lambda}} \right)^{1 - d/2} 
          K_{d/2 - 1}' \left( \sqrt{-\lambda} |x| \right) \bigg) 
    \end{split}
  \end{equation}
  for $d\geq 3$ and
  \begin{equation} \label{asymptotics_Bessel32}
      \nabla G_\lambda(x) =  \frac{1}{2 \pi} \frac{x}{|x|} \sqrt{-\lambda} \,
          K_{0}' \left( \sqrt{-\lambda} |x| \right)  
  \end{equation}
  in the case $d=2$.
  Since 
  \begin{equation} \label{asymptotics_Bessel4}
    K'_{d/2-1}(z) = K_{d/2}(z) + \frac{d-2}{2 z} K_{d/2-1}(z)
  \end{equation}
  by \cite[eq. 9.6.26]{abramowitz_stegun} we conclude from \eqref{asymptotics_Bessel1} that for $d\geq 3$ and $|x| \leq \frac{R}{\sqrt{-\lambda}}$ 
  \begin{equation*}
    \begin{split}
      \big| K'_{d/2-1}\big( \sqrt{-\lambda} |x| \big) \big| 
        &\leq \big| K_{d/2}\big( \sqrt{-\lambda} |x| \big) \big| 
        + \frac{d-2}{2 \sqrt{-\lambda} |x|} \big| K_{d/2-1}\big( \sqrt{-\lambda} |x| \big) \big| \\
      &\leq C_4 |x|^{-d/2} + C_5 \frac{d-2}{2 \sqrt{-\lambda} |x|} |x|^{1-d/2}
        \leq C_6 |x|^{-d/2}
      \end{split}
  \end{equation*}
  holds with some constant $C_6 = C_6(d, \lambda) > 0$. 
  It is easy to see that the same estimate is also true in the case $d=2$. 
  Hence, \eqref{asymptotics_Bessel1}, \eqref{asymptotics_Bessel3} and \eqref{asymptotics_Bessel32} yield
  \begin{equation*} 
    \begin{split}
      \left|\nabla G_\lambda(x)\right| &\leq C_7 \left(  |x|^{-d/2} \left|K_{d/2 - 1} \left( \sqrt{-\lambda} |x| \right) \right|
          + |x|^{1 - d/2} \left|K_{d/2 - 1}' \left( \sqrt{-\lambda} |x| \right) \right| \right)
          \leq c |x|^{1-d}.
    \end{split}
  \end{equation*}
  In the same way, using \eqref{asymptotics_Bessel2}, \eqref{asymptotics_Bessel3}, \eqref{asymptotics_Bessel32}, and \eqref{asymptotics_Bessel4} 
  one finds for all $|x| \geq \frac{R}{\sqrt{-\lambda}}$ that
  \begin{equation*}
    \big| \nabla G_\lambda(x) \big| \leq C e^{-\sqrt{-\lambda} |x|}
  \end{equation*}
  holds with some constant $C = C(d, \lambda)> 0$. Finally, it is not difficult to check that the asymptotics for $\nabla G_\lambda$ imply 
  $\nabla G_\lambda \in L^1(\mathbb{R}^d; \mathbb{C}^d)$.
\end{proof}

We start now with the estimates which are needed to show 
Lemma~\ref{proposition_convergence}. The first proposition 
in this context is only based on the integrability of $G_\lambda$; cf. Proposition~\ref{proposition_asymptotics_G_lambda}~(i). 

\begin{prop} \label{proposition_appendix_hat_1}
  Let $\Sigma$ be a $C^2$-hypersurface which satisfies 
  Hypothesis \ref{hypothesis_hypersurface} and let $\lambda \in (-\infty, 0)$.
  Then the following statements are true.
  \begin{itemize}
  \item[\rm (i)] There exists 
    a constant $C = C(d, \lambda) > 0$ such that
    \begin{equation*}
      \sup_{y_\Sigma \in \Sigma} \int_{\mathbb{R}^d} \left| G_{\lambda}(x - y_\Sigma) \right| 
      \mathrm{d} x \leq C.
    \end{equation*}
    \item[\rm (ii)] Let $\psi \in L^\infty(\Sigma \times (-1, 1))$
    and let $\varepsilon \in (0, \beta]$.
    Then there exists a constant $C = C(d, \lambda, \psi) > 0$ such that
    \begin{equation*}
      \sup_{(y_{\Sigma}, s) \in \Sigma \times (-1, 1)} \int_{\mathbb{R}^d} \big| 
          G_{\lambda}(x - y_{\Sigma} - \varepsilon s \nu(y_{\Sigma})) 
          \psi(y_{\Sigma}, s) \big| \mathrm{d} x \leq C.
    \end{equation*}
  \end{itemize}
\end{prop}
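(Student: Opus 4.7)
Both items are immediate consequences of translation invariance of the Lebesgue measure combined with the global $L^1$-bound $G_\lambda \in L^1(\mathbb{R}^d)$ that has already been established in Proposition~\ref{proposition_asymptotics_G_lambda}~(i). Nothing about the geometry of $\Sigma$ beyond the fact that its points (and the translates $y_\Sigma + \varepsilon s \nu(y_\Sigma)$) lie in $\mathbb{R}^d$ is used.

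For (i), I would fix $y_\Sigma \in \Sigma$ arbitrary and make the substitution $z = x - y_\Sigma$, observing that
\begin{equation*}
  \int_{\mathbb{R}^d} \big| G_\lambda(x - y_\Sigma) \big| \, \mathrm{d} x
  = \int_{\mathbb{R}^d} \big| G_\lambda(z) \big| \, \mathrm{d} z
  = \| G_\lambda \|_{L^1(\mathbb{R}^d)}.
\end{equation*}
Since the right-hand side is finite and depends only on $d$ and $\lambda$ by Proposition~\ref{proposition_asymptotics_G_lambda}~(i), the supremum over $y_\Sigma \in \Sigma$ is majorized by the constant $C := \| G_\lambda \|_{L^1(\mathbb{R}^d)}$, which proves (i).

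For (ii), I would argue in essentially the same way. For fixed $(y_\Sigma, s) \in \Sigma \times (-1,1)$ the point $y_\Sigma + \varepsilon s \nu(y_\Sigma)$ is just an element of $\mathbb{R}^d$, and the factor $\psi(y_\Sigma, s)$ is independent of the integration variable $x$, so it may be pulled outside. The substitution $z = x - y_\Sigma - \varepsilon s \nu(y_\Sigma)$ yields
\begin{equation*}
  \int_{\mathbb{R}^d} \big| G_\lambda(x - y_\Sigma - \varepsilon s \nu(y_\Sigma))\, \psi(y_\Sigma, s) \big| \, \mathrm{d} x
  \le \| \psi \|_{L^\infty(\Sigma\times(-1,1))} \| G_\lambda \|_{L^1(\mathbb{R}^d)},
\end{equation*}
uniformly in $(y_\Sigma, s)$ and (trivially) in $\varepsilon \in (0, \beta]$. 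Setting $C := \| \psi \|_{L^\infty} \| G_\lambda \|_{L^1}$ yields (ii).

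There is no real obstacle here: the proposition is a warm-up lemma whose sole purpose is to record that translating the argument of $G_\lambda$ by a point of $\mathbb{R}^d$ (whether by a point of $\Sigma$ or by a nearby off-surface point of the form $y_\Sigma + \varepsilon s \nu(y_\Sigma)$) does not affect the $L^1$-norm in $x$, and that this norm is finite. The more delicate estimates, where the integral is taken with respect to the surface measure on $\Sigma$ or where differences of Green's functions are involved, are handled separately in the other propositions of the appendix.
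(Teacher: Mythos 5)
Your proof is correct and is essentially the same as the paper's: translation invariance of the Lebesgue measure reduces both items to the finiteness of $\|G_\lambda\|_{L^1(\mathbb{R}^d)}$, which is Proposition~\ref{proposition_asymptotics_G_lambda}~(i), with the bounded factor $\psi$ pulled out in item (ii).
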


\begin{proof}
  We only prove item (ii), assertion (i) can be shown in the same way.
  For $(y_\Sigma, s) \in \Sigma \times (-1, 1)$ fixed we have
  \begin{equation*}
    \begin{split}
      \int_{\mathbb{R}^d} &\big| 
        G_{\lambda}(x - y_{\Sigma} - \varepsilon s \nu(y_{\Sigma})) 
        \psi(y_{\Sigma}, s) \big| \mathrm{d} x 
        \leq \| \psi \|_{L^\infty} \int_{\mathbb{R}^d} \big| 
        G_{\lambda}(x) \big| \mathrm{d} x \leq C
      \end{split}
  \end{equation*}
  with some constant $C = C(d, \lambda, \psi)$.  
  This is the claimed result. 
\end{proof}

The following lemma contains an auxiliary estimate associated to the hypersurface $\Sigma$.
Recall that $\sigma$ denotes the Hausdorff measure and $\Lambda_d$ is the $d$-dimensional Lebesgue measure.

\begin{lem} \label{lemma1}
  Let $\Sigma$ be a $C^2$-hypersurface which satisfies 
  Hypothesis \ref{hypothesis_hypersurface}. Then the following assertions are true.
  \begin{itemize}
    \item[\rm(i)] \it There exists a constant $\widetilde{C}_1 = \widetilde{C}_1(d, \Sigma) > 0$ 
    such that for any $r_0 > 0$  and any $x \in \mathbb{R}^d$ it holds
    \begin{equation*}
      \sigma(\Sigma \cap B(x, r_0)) \leq \widetilde{C}_1 r_0^{d-1}.
    \end{equation*} 
    
    \item[\rm(ii)] \it Let $\varepsilon \in (0, \beta]$ and let $\Omega_{\varepsilon}$ be as in 
    \eqref{def_tube}. 
    Then there exists a constant $\widetilde{C}_2 = \widetilde{C}_2(d, \Sigma) > 0$
    such that for any $r_0 > 0$  and any $x \in \mathbb{R}^d$ it holds
    \begin{equation*}
      \Lambda_d(\Omega_{\varepsilon} \cap B(x, r_0)) 
      \leq \widetilde{C}_2 \varepsilon r_0^{d-1}.
    \end{equation*} 
  \end{itemize}
\end{lem}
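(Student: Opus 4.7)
The plan is to reduce both estimates to Lebesgue-measure computations in the parameter domain, using the (finite) parametrization $\{\varphi_i,U_i,V_i\}_{i\in I}$ together with a subordinate partition of unity $\{\chi_i\}_{i\in I}$, and then exploiting the bi-Lipschitz lower bound on $\iota_{\varphi_i}$ supplied by Hypothesis~\ref{hypothesis_hypersurface}~(c).

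For item (i), setting $s=t=0$ in Hypothesis~\ref{hypothesis_hypersurface}~(c) gives $|\varphi_i(u)-\varphi_i(v)|\geq c|u-v|$ on $U_i$, which together with Definition~\ref{definition_hypersurface}~(e) makes $\varphi_i$ bi-Lipschitz; in particular $\|D\varphi_i\|$, and hence $\sqrt{\det G_i(u)}$, is uniformly bounded on $U_i$ by a constant depending only on $d$ and $\Sigma$. Applying the integration formula \eqref{integral} yields
\[
\sigma(\Sigma\cap B(x,r_0))=\sum_{i\in I}\int_{U_i}\chi_i(\varphi_i(u))\mathbf{1}_{B(x,r_0)}(\varphi_i(u))\sqrt{\det G_i(u)}\,du,
\]
so the task reduces to estimating $\Lambda_{d-1}(\{u\in U_i:\varphi_i(u)\in B(x,r_0)\})$ for each $i$. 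If this set is nonempty, I fix a reference point $u_0$ in it; the lower Lipschitz bound then forces the whole set into the $(d-1)$-ball $B_{d-1}(u_0,2r_0/c)$, whose Lebesgue measure is proportional to $r_0^{d-1}$. Summing over the finite index set $I$ gives the bound in (i).

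For item (ii) I follow the proof of Proposition~\ref{corollary_integration_tube}: extend the partition of unity to $\Omega_\beta$ by $\widetilde\chi_i(y):=\chi_i(y_\Sigma)$ and change variables via $y=\iota_{\varphi_i}(u,t)$ using \eqref{DetDiota}, so that
\[
\Lambda_d(\Omega_\varepsilon\cap B(x,r_0))=\sum_{i\in I}\int_{U_i\times(-\varepsilon,\varepsilon)}\chi_i(\varphi_i(u))\mathbf{1}_{B(x,r_0)}(\iota_{\varphi_i}(u,t))|\det(1-tL_i(u))|\sqrt{\det G_i(u)}\,dt\,du.
\]
The integrand is uniformly bounded thanks to Hypothesis~\ref{hypothesis_hypersurface}~(b) and the previous bound on $\sqrt{\det G_i}$. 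It remains to estimate the Lebesgue measure of $E_i:=\iota_{\varphi_i}^{-1}(B(x,r_0))\cap(U_i\times(-\varepsilon,\varepsilon))$. By the full bi-Lipschitz lower bound in Hypothesis~\ref{hypothesis_hypersurface}~(c), if $E_i\neq\emptyset$ and $(u_0,t_0)\in E_i$ is chosen as a reference point, then $E_i$ is contained in the Euclidean $d$-ball of radius $2r_0/c$ around $(u_0,t_0)$. Intersecting with the slab $\mathbb{R}^{d-1}\times(-\varepsilon,\varepsilon)$, each fixed-$u$ slice of $E_i$ has $t$-length at most $2\varepsilon$, while the projection onto the $u$-coordinate lies in a $(d-1)$-ball of radius $2r_0/c$; hence $\Lambda_d(E_i)\leq\mathrm{const}\cdot\varepsilon\, r_0^{d-1}$, and summation over $I$ yields (ii).

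Both items are routine once Hypothesis~\ref{hypothesis_hypersurface}~(c) is recognised as a bi-Lipschitz condition on $\iota_{\varphi_i}$; the only genuinely geometric step is the slab-versus-ball comparison in (ii), where one radial $r_0$-factor is traded for the transversal slab thickness $\varepsilon$, producing the correct $\varepsilon r_0^{d-1}$ scaling rather than $r_0^d$.
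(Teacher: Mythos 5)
Your proposal is correct and follows essentially the same route as the paper's proof: for (i), reduce via the surface integral formula~\eqref{integral} and the boundedness of $\sqrt{\det G_i}$ to a Lebesgue-measure estimate in $U_i$, then use the lower Lipschitz bound from Hypothesis~\ref{hypothesis_hypersurface}~(c) (specialised to $s=t=0$) and a fixed reference point to trap the preimage in a $(d-1)$-ball of radius $2r_0/c$; for (ii), change variables via $\iota_{\varphi_i}$ and~\eqref{DetDiota}, bound the Jacobian using Hypothesis~(b) and the uniform bound on $\det G_i$, and use the full estimate in Hypothesis~(c) to confine $\iota_{\varphi_i}^{-1}(B(x,r_0))\cap(U_i\times(-\varepsilon,\varepsilon))$ to a $(d-1)$-ball times the slab, yielding the $\varepsilon r_0^{d-1}$ scaling. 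The only cosmetic difference is that you carry out the change of variables inline rather than invoking Proposition~\ref{corollary_integration_tube}~(ii), but the computation is identical.
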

\begin{proof}
  (i) By the definition of the measure $\sigma$ we have
  \begin{equation*}
    \sigma(\Sigma \cap B(x, r_0)) = \int_{\Sigma \cap B(x, r_0)} \mathrm{d} \sigma(x_\Sigma) 
    = \sum_{i \in I} \int_{\varphi_i^{-1}(\Sigma \cap B(x, r_0))} 
      \chi_i(\varphi_i(u)) \sqrt{\det G_i(u)} \mathrm{d} u;
  \end{equation*}
  cf. \eqref{integral}. Assume that $\varphi_i^{-1}(\Sigma \cap B(x, r_0)) \neq \emptyset$, 
  let $u_i \in \varphi_i^{-1}(\Sigma \cap B(x, r_0))$ be fixed
  and let $u \in \varphi_i^{-1}(\Sigma \cap B(x, r_0))$ be arbitrary. 
  Using Hypothesis \ref{hypothesis_hypersurface} (c) we obtain
  \begin{equation*}
    c |u - u_i| \leq |\varphi_i(u) - \varphi_i(u_i)| 
    \leq |\varphi_i(u) - x| + |x - \varphi_i(u_i)| \leq 2 r_0.
  \end{equation*}
  Hence, it follows $\varphi_i^{-1}(\Sigma \cap B(x, r_0)) \subset B(u_i, 2 r_0 / c)$,
  which implies
  \begin{equation*}
    \begin{split}
      \sigma(\Sigma \cap B(x, r_0)) &= \sum_{i \in I} \int_{\varphi_i^{-1}(\Sigma \cap B(x, r_0))} 
          \chi_i(\varphi_i(u)) \sqrt{\det G_i(u)} \mathrm{d} u \\
      &\leq C_1 \sum_{i \in I} \int_{\varphi_i^{-1}(\Sigma \cap B(x, r_0))} \mathrm{d} u 
          \leq C_1 \sum_{i \in I} \int_{B(u_i, 2 r_0/c)} \mathrm{d} u \leq \widetilde{C}_1 r_0^{d-1},
    \end{split}
  \end{equation*}
  where we have used that $\det G_i$ is uniformly bounded and that the index set $I$ is finite by assumption.
  
  (ii) Using Proposition \ref{corollary_integration_tube} (ii)
  and Hypothesis \ref{hypothesis_hypersurface} (b) we find
  \begin{equation*}
    \begin{split}
      \Lambda_d(\Omega_{\varepsilon} \cap B(x, r_0)) 
          &= \int_{\Omega_{\varepsilon} \cap B(x, r_0)} \mathrm{d} y 
          = \int_{\Omega_{\varepsilon}} \mathds{1}_{\Omega_{\varepsilon} \cap B(x, r_0)}(y) \mathrm{d} y \\
      &= \int_{\Sigma} \int_{-\varepsilon}^{\varepsilon} 
          \mathds{1}_{\Omega_{\varepsilon} \cap B(x, r_0)}(y_{\Sigma} + s \nu(y_{\Sigma})) 
          \det(1 - s W(y_{\Sigma})) \mathrm{d} s \mathrm{d} \sigma(y_\Sigma) \\
      &\leq C_2 \int_{\Sigma} \int_{-\varepsilon}^{\varepsilon} 
          \mathds{1}_{\Omega_{\varepsilon} \cap B(x, r_0)}(y_{\Sigma} + s \nu(y_{\Sigma})) 
          \mathrm{d} s \mathrm{d} \sigma(y_\Sigma).
    \end{split}
  \end{equation*}
  Let $\iota_{\varphi_i}$ be given by \eqref{iotavarphi}. 
  Using the definition of the measure $\sigma$ and the fact that $\det G_i$ is 
  uniformly bounded by assumption, it follows
  \begin{equation*}
    \begin{split}
      \Lambda_d(\Omega_{\varepsilon} \cap B(x, r_0)) 
          &\leq C_2 \int_{\Sigma} \int_{-\varepsilon}^{\varepsilon} 
          \mathds{1}_{\Omega_{\varepsilon} \cap B(x, r_0)}(y_{\Sigma} + s \nu(y_{\Sigma})) 
          \mathrm{d} s \mathrm{d} \sigma(y_\Sigma) \\
      &= C_2 \sum_{i \in I} \int_{U_i} \int_{-\varepsilon}^{\varepsilon} \chi_i(\varphi_i(u)) 
          \mathds{1}_{\Omega_{\varepsilon} \cap B(x, r_0)}(\iota_{\varphi_i}(u, s)) 
          \sqrt{\det G_i(u)} \mathrm{d} s \mathrm{d} u \\
      &\leq C_3 \sum_{i \in I} \int_{U_i} \int_{-\varepsilon}^{\varepsilon} 
          \mathds{1}_{A_i}(u, s) \mathrm{d} s \mathrm{d} u,
    \end{split}
  \end{equation*}
  where $A_i := \iota_{\varphi_i}^{-1}(\Omega_{\varepsilon} \cap B(x, r_0)) 
  \subset U_i \times (-\varepsilon, \varepsilon)$ and hence 
  $\mathds{1}_{A_i} = \mathds{1}_{\Omega_{\varepsilon} \cap B(x, r_0)} \circ \iota_{\varphi_i}$.
  Assume $A_i \neq \emptyset$, let $(u_i, t) \in A_i$ be fixed and let $(u, s) \in A_i$ be arbitrary. 
  Using Hypothesis \ref{hypothesis_hypersurface} (c) we find
  \begin{equation*}
    \begin{split}
      c |u - u_i| &\leq c \left( |u - u_i|^2 + (s - t)^2 \right)^{1/2} 
          \leq \left| \varphi_i(u) + s \nu(\varphi_i(u)) - 
          \varphi_i(u_i) - t \nu(\varphi_i(u_i)) \right| \\
      &\leq \left| \varphi_i(u) + s \nu(\varphi_i(u)) - x \right| + \left| x - 
          \varphi_i(u_i) - t \nu(\varphi_i(u_i)) \right|  < 2 r_0.
    \end{split}
  \end{equation*}
  Therefore, we find $A_i \subset B(u_i, 2 r_0 / c) \times (-\varepsilon, \varepsilon)$, 
  which yields
  \begin{equation*}
    \begin{split}
      \Lambda_d(\Omega_{\varepsilon} \cap B(x, r_0)) 
          &\leq C_3 \sum_{i \in I} \int_{U_i} \int_{-\varepsilon}^{\varepsilon} 
          \mathds{1}_{A_i}(u, s) \mathrm{d} s \mathrm{d} u \\
      &\leq C_3 \sum_{i \in I} \int_{B(u_i, 2 r_0 / c)} \int_{-\varepsilon}^{\varepsilon}
      \mathrm{d} s \mathrm{d} u = \widetilde{C}_2 \varepsilon r_0^{d-1}.
    \end{split}
  \end{equation*}
\end{proof}

The next proposition contains two estimates of a similar flavour as in Proposition~\ref{proposition_appendix_hat_1}.
The proof is essentially based on Lemma~\ref{lemma1}.

\begin{prop} \label{proposition2}
  Let $\Sigma$ be a $C^2$-hypersurface which satisfies 
  Hypothesis \ref{hypothesis_hypersurface} and let $\lambda \in (-\infty, 0)$. Then the following assertions are true.
  \begin{itemize}
    \item[\rm (i)] There is a constant
    $C = C(d, \lambda, \Sigma) > 0$ such that
    \begin{equation*}
      \sup_{x \in \mathbb{R}^d} \int_{\Sigma} \big| G_{\lambda}(x - y_{\Sigma}) \big| 
          \mathrm{d} \sigma(y_{\Sigma}) \leq C.
    \end{equation*}
    Moreover, for any fixed $C > 0$ there exists $\lambda_C < 0$ such that 
    the above inequality is satisfied for this fixed $C$ and all $\lambda < \lambda_C$.
    \item[\rm (ii)] Let $\varepsilon \in (0, \beta]$ and let $\Omega_\varepsilon$ be as in \eqref{def_tube}.
    Then there is a constant $M = M(d, \lambda, \Sigma) > 0$ such that
    \begin{equation*}
      \sup_{x \in \mathbb{R}^d} \int_{\Omega_{\varepsilon}} |G_{\lambda}(x - y)| \mathrm{d} y  
      \leq M \varepsilon.
    \end{equation*} 
    Moreover, for any fixed $M > 0$ there exists $\lambda_M < 0$ such that 
    the above inequality is satisfied for this fixed $M$, all $\lambda < \lambda_M$ and all $\varepsilon \in (0, \beta]$.
  \end{itemize}
\end{prop}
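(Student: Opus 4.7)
My plan is to prove both parts of Proposition~\ref{proposition2} by a dyadic annulus decomposition around the observation point~$x$, combining the pointwise bounds on $G_\lambda$ from Proposition~\ref{proposition_asymptotics_G_lambda}\,(i) with the surface/volume bounds from Lemma~\ref{lemma1}. Fix the threshold $R = R(d) > 0$ supplied by Proposition~\ref{proposition_asymptotics_G_lambda} and introduce the characteristic length $R_\lambda := R/\sqrt{-\lambda}$. For $x \in \dR^d$ fixed, split the annuli $A_k := \{y\in\dR^d : 2^{k-1} R_\lambda < |x-y| \leq 2^k R_\lambda\}$ indexed by $k \in \ZZ$, so that $\bigcup_{k\leq 0} A_k$ essentially covers $B(x,R_\lambda)$ and $\bigcup_{k\geq 1} A_k$ covers its complement. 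On each $A_k$ the distance $|x-y|$ is comparable to $2^k R_\lambda$ up to a factor~$2$.

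\textbf{Part (i).} Apply Lemma~\ref{lemma1}\,(i) to obtain $\sigma(\Sigma\cap A_k) \leq \widetilde C_1 (2^k R_\lambda)^{d-1}$. On the near-field annuli ($k\leq 0$) use, for $d\geq 3$, the bound $|G_\lambda(x-y_\Sigma)| \leq c\,|x-y_\Sigma|^{2-d} \leq c\,(2^{k-1}R_\lambda)^{2-d}$; the corresponding contribution telescopes to a geometric series in $2^{k}$ times $R_\lambda$, whose sum is a constant multiple of $R_\lambda$. For $d=2$ the additional logarithmic factor is dominated by $|\ln(2^{k}R)|\lesssim 1+|k|$, which still yields a convergent series of the same order $R_\lambda$. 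On the far-field annuli ($k\geq 1$) use the second estimate of Proposition~\ref{proposition_asymptotics_G_lambda}\,(i), whose exponential factor $e^{-\sqrt{-\lambda}|x-y_\Sigma|} \leq e^{-2^{k-1}R}$ dominates any polynomial growth from $(|x-y_\Sigma|/\sqrt{-\lambda})^{1-d/2}$ and from the surface measure $(2^kR_\lambda)^{d-1}$; summing over $k\geq 1$ gives a finite bound of order $(-\lambda)^{-1/2}$ up to a dimensional constant. Combining the two ranges yields a bound independent of $x$ of the form $C(d,\Sigma)\bigl((-\lambda)^{-1/2} + \text{polynomial in } R_\lambda\bigr)$, which establishes~(i) and, since every summand tends to~$0$ as $\lambda\to-\infty$, also the quantitative second claim.

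\textbf{Part (ii).} The argument is literally the same, with Lemma~\ref{lemma1}\,(i) replaced by Lemma~\ref{lemma1}\,(ii): the bound becomes $\Lambda_d(\Omega_\eps\cap A_k) \leq \widetilde C_2\,\eps\,(2^kR_\lambda)^{d-1}$. This produces exactly the same dyadic sums as in~(i) multiplied by $\eps$, uniformly in $\eps\in(0,\beta]$. Hence one obtains $\sup_{x\in\dR^d}\int_{\Omega_\eps}|G_\lambda(x-y)|\dd y \leq M(d,\lambda,\Sigma)\,\eps$, with $M(d,\lambda,\Sigma)\to 0$ as $\lambda\to-\infty$, which is the second assertion.

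The main technical point to be careful about is the bookkeeping of the $\lambda$-dependence: one must verify that \emph{both} the near-field geometric sum and the far-field exponential sum produce prefactors that vanish as $\lambda\to-\infty$, so that the quantitative second claims in (i) and (ii) hold. In the near field this comes from $R_\lambda = R/\sqrt{-\lambda}\to 0$; in the far field it comes from the exponential decay overwhelming any polynomial in $\sqrt{-\lambda}$. Everything else (the dyadic decomposition, application of Lemma~\ref{lemma1}, and summing the geometric/exponential series) is routine once these two estimates are combined.
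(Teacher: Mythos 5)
Your proposal is correct and follows essentially the same strategy as the paper: split at the scale $R/\sqrt{-\lambda}$, decompose into annuli, combine the near/far-field bounds on $G_\lambda$ from Proposition~\ref{proposition_asymptotics_G_lambda}\,(i) with the surface/volume bounds from Lemma~\ref{lemma1}, and sum the resulting geometric and exponentially decaying series, tracking that each contribution is of order $(-\lambda)^{-1/2}$. The only cosmetic difference is that the paper uses dyadic annuli in the near field but unit-width annuli in the far field, whereas you use a single dyadic decomposition throughout; both work because the exponential decay kills any polynomial growth in the far field.
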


\begin{proof}
  (i) Let $x \in \mathbb{R}^d$ and $\lambda \in (-\infty, 0)$ be fixed,
  let $R > 0$ be as in Proposition \ref{proposition_asymptotics_G_lambda}
  and set 
  \begin{equation*}
      \Sigma_1 = \bigl\{ y_\Sigma \in \Sigma: 
          \sqrt{-\lambda} |x - y_\Sigma| < R \bigr\}\quad\text{and}\quad
      \Sigma_2 = \bigl\{ y_\Sigma \in \Sigma: 
          \sqrt{-\lambda} |x - y_\Sigma| > R \bigr\}.
  \end{equation*}
  Then we have
  \begin{equation*}
    \int_{\Sigma} |G_{\lambda}(x - y_{\Sigma})| 
    \mathrm{d} \sigma(y_{\Sigma}) 
    = \int_{\Sigma_1} |G_{\lambda}(x - y_{\Sigma})| 
    \mathrm{d} \sigma(y_{\Sigma}) + \int_{\Sigma_2} |G_{\lambda}(x - y_{\Sigma})| 
    \mathrm{d} \sigma(y_{\Sigma}).
  \end{equation*}
  
  In order to find an estimate for the integral over $\Sigma_1$ observe that
  \begin{equation*}
    \int_{\Sigma_1} |G_{\lambda}(x - y_\Sigma)| \mathrm{d} \sigma(y_\Sigma)
    = \sum_{n=1}^{\infty} \int_{A_n} |G_{\lambda}(x-y_\Sigma)| \mathrm{d} \sigma(y_\Sigma),
  \end{equation*}
  where the sets $A_n$ are defined as
  \begin{equation*}
    A_n = \left\{ y_{\Sigma} \in \Sigma: 2^{-n} < \sqrt{-\lambda} 
        |x - y_{\Sigma}|/R < 2^{-n+1} \right\}, \qquad n \in \mathbb{N}.
  \end{equation*}
  Due to the asymptotics of $G_{\lambda}$ in Proposition \ref{proposition_asymptotics_G_lambda} (i) we find for $y_\Sigma \in A_n$ that
  \begin{equation*}
    |G_{\lambda}(x-y_\Sigma)| \leq \begin{cases} C_1 \big(1 + n \ln 2 \big), &\text{ if } d=2, \\ 
    C_2 (-\lambda)^{d/2-1} \cdot \big(2^{d-2}\big)^{n} , 
    &\text{ if } d \geq 3.\end{cases}
  \end{equation*}
  Hence, we get in the case $d \geq 3$ that
  \begin{equation*} 
    \begin{split}
      \int_{\Sigma_1} |G_{\lambda}(x - y_\Sigma)| \mathrm{d} \sigma(y_\Sigma)
      &= \sum_{n=1}^{\infty} \int_{A_n} |G_{\lambda}(x-y_\Sigma)| \mathrm{d} \sigma(y_\Sigma)
      \leq \sum_{n=1}^{\infty} C_2 (-\lambda)^{d/2-1} \cdot \big(2^{d-2}\big)^{n} \int_{A_n} \mathrm{d} \sigma(y_\Sigma).
    \end{split}
  \end{equation*}
  Using that $A_n \subset B\big(x, R \cdot 2^{-n+1}/\sqrt{-\lambda} \big)$, we can 
  employ Lemma \ref{lemma1} (i) and get
  \begin{equation} \label{proposition2_label3}
    \begin{split}
      \int_{\Sigma_1} |G_{\lambda}(x - y_\Sigma)| \mathrm{d} \sigma(y_\Sigma) 
      & \leq C_3 (-\lambda)^{d/2-1} 
          \sum_{n=1}^{\infty} \big(2^{d-2}\big)^{n} \cdot 
          \big(R \cdot 2^{-n+1}/\sqrt{-\lambda}\big)^{d-1} \\
      & \leq C_4 (-\lambda)^{-1/2}
          \sum_{n=1}^{\infty} 2^{-n} = C_4 (-\lambda)^{-1/2}.
    \end{split}
  \end{equation}

  Note that similar estimates are also true in the case $d=2$, as for $y_\Sigma \in A_n$ it holds
  \begin{equation*}
    |G_{\lambda}(x - y_\Sigma)| \leq C_1 \big(1 + n \ln 2 \big) \leq C_5 \big(1 + 2^{n/2} \big) 
    \leq 2 C_5 2^{n/2}.
  \end{equation*}
  
  Finally, we derive an estimate for the integral over $\Sigma_2$. 
  For this purpose we decompose the integral as follows
  \begin{equation*}
    \int_{\Sigma_2} |G_{\lambda}(x - y_\Sigma)| \mathrm{d} \sigma(y_\Sigma) 
    = \sum_{n=1}^{\infty} \int_{B_n} |G_{\lambda}(x - y_\Sigma)| \mathrm{d} \sigma(y_\Sigma),
  \end{equation*}
  where the sets $B_n$ are given by
  \begin{equation*}
     B_n = \left\{ y_{\Sigma} \in \Sigma: 
         R + n - 1 < \sqrt{-\lambda} |x - y_{\Sigma}| < R + n \right\}, \qquad n \in \mathbb{N}.
  \end{equation*}
  Using the asymptotics of $G_{\lambda}$ for large arguments
  from Proposition \ref{proposition_asymptotics_G_lambda} (i)
  we find for $y_\Sigma \in B_n$ that
  \begin{equation*}
    \big| G_\lambda(x - y_\Sigma) \big| 
        \leq C_6 \left( \frac{|x - y_\Sigma|}{\sqrt{-\lambda}} \right)^{1 - d/2} e^{-\sqrt{-\lambda} |x - y_\Sigma|}
        \leq C_7 (-\lambda)^{d/2-1} e^{-(R + n - 1)}.
  \end{equation*}
  Since $B_n \subset B\big( x, (R+n)/\sqrt{-\lambda} \big)$,
  we can employ Lemma \ref{lemma1} (i) and get that
  \begin{equation} \label{proposition2_label4}
    \begin{split}
      \int_{\Sigma_2} |G_{\lambda}(x - y_\Sigma)| \mathrm{d} \sigma(y_\Sigma) 
          &= \sum_{n=1}^{\infty} \int_{B_n} |G_{\lambda}(x-y_\Sigma)| \mathrm{d} \sigma(y_\Sigma) 
          \leq \sum_{n=1}^{\infty} C_7 (-\lambda)^{d/2-1} e^{-(R + n - 1)} \int_{B_n} \mathrm{d} \sigma(y_\Sigma) \\
      &\leq C_8 (-\lambda)^{-1/2} \sum_{n=1}^{\infty} e^{-n} (R+n)^{d-1} 
      = C_9 (-\lambda)^{-1/2}.
    \end{split}
  \end{equation}
  The estimates \eqref{proposition2_label3} and \eqref{proposition2_label4} 
  yield now the claimed bounds. In particular, by choosing $\lambda <0$ with $\vert\lambda\vert$ sufficiently large
  the constant $C := (C_4 + C_9) (-\lambda)^{-1/2}$ becomes arbitrarily small. 
  
  (ii) The proof of this statement is similar to the one of assertion (i). 
  The difference is to replace $\Sigma$ by the layer $\Omega_\varepsilon$ and to use
  Lemma \ref{lemma1} (ii) instead of Lemma \ref{lemma1} (i).
\end{proof}

The next result is a consequence of Proposition \ref{proposition2}.

\begin{prop} \label{proposition3}
  Let $\Sigma$ be a $C^2$-hypersurface which satisfies 
  Hypothesis \ref{hypothesis_hypersurface},
  let $\lambda \in (-\infty, 0)$ and let $\varepsilon \in (0, \beta]$. 
  Then the following statements are true.
  \begin{itemize}
    \item[\rm (i)] Let $\psi \in L^{\infty}(\Sigma \times (-1, 1))$.
    Then there exists 
    a constant $C= C(d, \lambda, \Sigma, \psi) > 0$ such that
    \begin{equation*}
      \sup_{x \in \mathbb{R}^d} \int_{\Sigma} \int_{-1}^1 
          \big| G_{\lambda}(x - y_{\Sigma} - \varepsilon s \nu(y_{\Sigma})) 
          \psi(y_\Sigma, s)\big| \mathrm{d} s \mathrm{d} \sigma(y_{\Sigma}) 
          \leq C.
    \end{equation*}
    \item[\rm (ii)] Let $\omega, \psi \in L^{\infty}(\Sigma \times (-1, 1))$.
    Then there exists a constant $C = C(d, \lambda, \Sigma, \omega, \psi) > 0$ such that
    \begin{equation*}
    \begin{split}
       \sup_{(x_\Sigma, t) \in \Sigma \times (-1, 1)} &\int_{\Sigma} \int_{-1}^1 \big| \omega(x_{\Sigma}, t) 
           G_{\lambda}(x_{\Sigma} + \varepsilon t \nu(x_{\Sigma}) - y_{\Sigma} - \varepsilon s \nu(y_{\Sigma})) 
           \psi(y_{\Sigma}, s)\big| \mathrm{d} s \mathrm{d} \sigma(y_{\Sigma}) 
       \leq C.
    \end{split}
  \end{equation*}
  \end{itemize}
\end{prop}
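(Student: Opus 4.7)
The plan is to reduce both (i) and (ii) to Proposition~\ref{proposition2}(ii) via the change-of-variables identity of Proposition~\ref{corollary_integration_tube}(ii), so that no fresh Bessel-function estimates are needed. For (ii) I will additionally simply observe that the point $x_\Sigma + \varepsilon t\nu(x_\Sigma)$ is just some point of $\mathbb{R}^d$, so that the estimate is majorized by the one in (i).

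For (i), I first bound $|\psi|$ by $\|\psi\|_{L^\infty}$ and rescale via $\tau := \varepsilon s$ to obtain
\[
\int_\Sigma \int_{-1}^{1} \bigl| G_\lambda(x - y_\Sigma - \varepsilon s \nu(y_\Sigma))\bigr|\, ds\, d\sigma(y_\Sigma) = \frac{1}{\varepsilon}\int_\Sigma \int_{-\varepsilon}^{\varepsilon} \bigl| G_\lambda(x - y_\Sigma - \tau \nu(y_\Sigma))\bigr|\, d\tau\, d\sigma(y_\Sigma).
\]
Since $(-\varepsilon,\varepsilon)\subset(-\beta,\beta)$, Hypothesis~\ref{hypothesis_hypersurface}(b) gives $\det(1 - \tau W(y_\Sigma)) \ge 1 - \eta > 0$ throughout the region of integration. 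Multiplying and dividing by this Jacobian and invoking Proposition~\ref{corollary_integration_tube}(ii) (whose proof works verbatim with $\Omega_\varepsilon$ in place of $\Omega_\beta$ for every $\varepsilon\in(0,\beta]$) rewrites the right-hand side as at most
\[
\frac{1}{(1-\eta)\varepsilon}\int_{\Omega_\varepsilon}\bigl|G_\lambda(x - z)\bigr|\, dz.
\]
Proposition~\ref{proposition2}(ii) bounds this last integral by $M\varepsilon$ uniformly in $x\in\mathbb{R}^d$ and $\varepsilon\in(0,\beta]$; the two factors of $\varepsilon$ cancel and produce the desired $\varepsilon$-independent bound.

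For (ii), I bound $|\omega(x_\Sigma,t)|$ by $\|\omega\|_{L^\infty}$ and note that for any $(x_\Sigma,t)\in\Sigma\times(-1,1)$ the shifted point $x := x_\Sigma + \varepsilon t \nu(x_\Sigma)$ belongs to $\mathbb{R}^d$. Replacing the supremum over $(x_\Sigma,t)$ by the (larger) supremum over $x\in\mathbb{R}^d$ therefore reduces the claim immediately to the bound proved in (i). I do not expect any genuine obstacle here; the argument is essentially a rescaling plus one application each of Proposition~\ref{corollary_integration_tube}(ii) and Proposition~\ref{proposition2}(ii), and the only bookkeeping lies in tracking that the resulting constant depends on $d,\lambda,\Sigma,\omega,\psi$ (and $\eta$, which is absorbed into the $\Sigma$-dependence) but not on $\varepsilon$.
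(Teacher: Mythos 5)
Your argument is correct and follows essentially the same route as the paper: both substitute $\tau = \varepsilon s$, use Hypothesis~\ref{hypothesis_hypersurface}(b) to insert the Jacobian $\det(1-\tau W)$, convert to an integral over $\Omega_\varepsilon$ via Proposition~\ref{corollary_integration_tube}(ii), and then apply Proposition~\ref{proposition2}(ii) so that the factor $M\varepsilon$ cancels the $1/\varepsilon$. The reduction of (ii) to the pointwise bound with $z = x_\Sigma + \varepsilon t\nu(x_\Sigma)\in\mathbb{R}^d$ is likewise exactly the paper's approach.
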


\begin{proof}
  Let $z \in \mathbb{R}^d$ be fixed. According to Hypothesis \ref{hypothesis_hypersurface} (b)
  there exists a constant $C_1>0$ such that 
  for any $s \in (-1, 1)$ and all $y_{\Sigma} \in \Sigma$ the estimate
  \begin{equation*}
    1 \leq C_1 \det(1 - \varepsilon s W(y_{\Sigma}))
  \end{equation*}
  holds. 
  Using 
  Proposition \ref{corollary_integration_tube} (ii) and 
  Proposition \ref{proposition2} (ii) we conclude
  that there exists a constant $C > 0$ (independent of $z$) such that
  \begin{equation} \label{proposition3_label1}
    \begin{split}
        \int_{\Sigma} \int_{-1}^1 \big| 
            G_{\lambda}(z - y_{\Sigma} - \varepsilon s \nu(y_{\Sigma}))
            \big| \mathrm{d} s \mathrm{d} \sigma(y_{\Sigma}) 
        \leq C_1 \int_{\Sigma} \int_{-1}^1 \big| 
            G_{\lambda}(z - y_{\Sigma} - \varepsilon s \nu(y_{\Sigma})) \big|
            \det(1 - \varepsilon s W(y_{\Sigma})) \mathrm{d} s \mathrm{d} \sigma(y_{\Sigma})& \\
        = \frac{C_1}{\varepsilon} \int_{\Sigma} \int_{-\varepsilon}^{\varepsilon}
            \big| G_{\lambda}(z - y_{\Sigma} - r \nu(y_{\Sigma})) \big| 
            \det(1 - r W(y_{\Sigma})) \mathrm{d} r \mathrm{d} \sigma(y_{\Sigma}) 
        = \frac{C_1}{\varepsilon} \int_{\Omega_{\varepsilon}} 
            \big| G_{\lambda}(z - y) \big| \mathrm{d} y \leq C&.
    \end{split}
  \end{equation}

  (i) Let $x \in \mathbb{R}^d$ be fixed. Then it follows from \eqref{proposition3_label1}
  (with $z = x$) that
  \begin{equation*} 
    \begin{split}
      \int_{\Sigma} \int_{-1}^1 &
          \big| G_{\lambda}(x - y_{\Sigma} - \varepsilon s \nu(y_{\Sigma})) 
          \psi(y_\Sigma, s)\big| \mathrm{d} s \mathrm{d} \sigma(y_{\Sigma}) 
      \leq \| \psi \|_{L^\infty} \int_{\Sigma} \int_{-1}^1 
          \big| G_{\lambda}(x - y_{\Sigma} - \varepsilon s \nu(y_{\Sigma})) 
          \big| \mathrm{d} s \mathrm{d} \sigma(y_{\Sigma}) \leq C,
    \end{split}
  \end{equation*}
  where $C$ does not depend on $x$. This is the claimed result.

  (ii) Let $x_\Sigma \in \Sigma$ and $t \in (-1, 1)$ be fixed. Then it follows from \eqref{proposition3_label1}
  (with $z = x_{\Sigma} + \varepsilon t \nu(x_{\Sigma})$) that
  \begin{equation*} 
    \begin{split}
      \int_{\Sigma} \int_{-1}^1 \big| \omega(x_{\Sigma}, t) &
           G_{\lambda}(x_{\Sigma} + \varepsilon t \nu(x_{\Sigma}) - y_{\Sigma} - \varepsilon s \nu(y_{\Sigma})) 
           \psi(y_{\Sigma}, s)\big| \mathrm{d} s \mathrm{d} \sigma(y_{\Sigma}) \\
      &\leq \| \omega \|_{L^\infty} \| \psi \|_{L^\infty} \int_{\Sigma} \int_{-1}^1 
          \big| G_{\lambda}(x_{\Sigma} + \varepsilon t \nu(x_{\Sigma}) - y_{\Sigma} - \varepsilon s \nu(y_{\Sigma})) 
          \big| \mathrm{d} s \mathrm{d} \sigma(y_{\Sigma}) \leq C,
    \end{split}
  \end{equation*}
  where $C$ does not depend on $x_\Sigma$ and $t$. This completes the proof of Proposition~\ref{proposition3}.
\end{proof}

The following estimates are slightly more involved than the previous ones, 
as from now on also the gradient of $G_{\lambda}$ has to be considered.
First, we provide a useful simple argument that is needed for the next results.
Assume that $\tau \in L^\infty(\mathbb{R}^d \times \Sigma \times (-1, 1)^2; \mathbb{R}^d)$ and that
$(x, y_\Sigma, s, t) \in \mathbb{R}^d \times \Sigma \times (-1, 1)^2$ is such that
\begin{equation*}
   x - y_\Sigma + \varepsilon \theta \tau(x, y_\Sigma, s, t) \neq 0 
\end{equation*}
for all $\theta \in [0, 1]$ and all $\varepsilon \in (0, \beta]$. 
Since $G_\lambda(y)$ is differentiable for $y \neq 0$ (see Proposition \ref{proposition_asymptotics_G_lambda} (ii))
it follows that the mapping
$[0, 1] \ni \theta \mapsto G_{\lambda}(x - y_{\Sigma} + \varepsilon \theta \tau(x, y_{\Sigma}, s, t))$
is differentiable and one has
\begin{equation*}
  \begin{split}
    G_{\lambda}(x - y_{\Sigma} + \varepsilon \tau(x, y_\Sigma, s, t)) - G_{\lambda}(x - y_{\Sigma}) 
        &= \int_0^1 \frac{\mathrm{d}}{\mathrm{d} \theta} 
        G_{\lambda}(x - y_{\Sigma} + \varepsilon \theta \tau(x, y_{\Sigma}, s, t)) \mathrm{d} \theta \\
    &= \int_0^1 \big\langle \nabla G_{\lambda}(x - y_{\Sigma} + \varepsilon \theta \tau(x, y_{\Sigma}, s, t)),
        \varepsilon \tau(x, y_{\Sigma}, s, t) \big\rangle \mathrm{d} \theta.
  \end{split}
\end{equation*}
Using the Cauchy-Schwarz inequality for vectors in $\mathbb{R}^d$ this leads to
\begin{equation} \label{estimate_diff_G_lambda}
  \begin{split}
    \big| G_{\lambda}(x- y_{\Sigma} + \varepsilon \tau(x, y_{\Sigma}, s, t)) - 
        G_{\lambda}(x - y_{\Sigma}) \big| 
    &\leq \varepsilon \| \tau \|_{L^{\infty}} 
        \int_0^1 \left| \nabla G_{\lambda}(x - y_{\Sigma} + \varepsilon \theta \tau(x, y_{\Sigma}, s, t)) \right| \mathrm{d} \theta.
  \end{split}
\end{equation}

\begin{prop} \label{proposition_appendix_1}
  Let $\Sigma$ be a $C^2$-hypersurface which satisfies 
  Hypothesis \ref{hypothesis_hypersurface},
  let $\psi \in L^{\infty}(\Sigma \times (-1, 1))$ 
  and let $\lambda \in (-\infty, 0)$. Then there exists 
  a constant $C = C(d, \lambda, \psi) > 0$ such that for all $\varepsilon \in (0, \beta]$ it holds
  \begin{equation*}
    \sup_{(y_{\Sigma}, s) \in \Sigma \times (-1, 1)} \int_{\mathbb{R}^d} 
        \left| \big(G_{\lambda}(x - y_{\Sigma} - \varepsilon s \nu(y_{\Sigma})) 
        - G_{\lambda}(x - y_{\Sigma})\big) \psi(y_{\Sigma}, s) \right| \mathrm{d} x 
        \leq C \varepsilon.
  \end{equation*}
\end{prop}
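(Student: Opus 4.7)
The plan is to apply the pointwise mean value bound \eqref{estimate_diff_G_lambda} with the choice $\tau(x, y_\Sigma, s, t) := -s\,\nu(y_\Sigma)$, which is independent of $x$ and $t$ and satisfies $\|\tau\|_{L^\infty} \le 1$. For fixed $(y_\Sigma, s) \in \Sigma \times (-1, 1)$ and $\varepsilon \in (0, \beta]$, the exceptional set
\[
E_{y_\Sigma, s, \varepsilon} := \bigl\{ x \in \mathbb{R}^d : x - y_\Sigma - \varepsilon \theta s \nu(y_\Sigma) = 0 \text{ for some } \theta \in [0, 1] \bigr\}
\]
is a line segment in $\mathbb{R}^d$ (with $d \ge 2$) and therefore has Lebesgue measure zero. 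Hence \eqref{estimate_diff_G_lambda} applies for almost every $x \in \mathbb{R}^d$ and yields
\[
\bigl| G_\lambda(x - y_\Sigma - \varepsilon s \nu(y_\Sigma)) - G_\lambda(x - y_\Sigma) \bigr| \le \varepsilon \int_0^1 \bigl| \nabla G_\lambda(x - y_\Sigma - \varepsilon \theta s \nu(y_\Sigma)) \bigr| \, \mathrm{d} \theta.
\]

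Multiplying by $|\psi(y_\Sigma, s)| \le \|\psi\|_{L^\infty}$, integrating in $x$, and invoking Fubini's theorem together with the translation invariance of the $d$-dimensional Lebesgue measure, I obtain
\[
\int_{\mathbb{R}^d} \bigl| \bigl(G_\lambda(x - y_\Sigma - \varepsilon s \nu(y_\Sigma)) - G_\lambda(x - y_\Sigma)\bigr) \psi(y_\Sigma, s) \bigr| \, \mathrm{d} x \le \varepsilon \|\psi\|_{L^\infty} \int_0^1 \|\nabla G_\lambda\|_{L^1(\mathbb{R}^d; \mathbb{C}^d)} \, \mathrm{d} \theta,
\]
and the right-hand side equals $\varepsilon \|\psi\|_{L^\infty} \|\nabla G_\lambda\|_{L^1(\mathbb{R}^d; \mathbb{C}^d)}$.

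The last factor is finite by Proposition~\ref{proposition_asymptotics_G_lambda}~(ii), which gives the integrability of $\nabla G_\lambda$ on $\mathbb{R}^d$. Setting
\[
C := \|\psi\|_{L^\infty} \|\nabla G_\lambda\|_{L^1(\mathbb{R}^d; \mathbb{C}^d)},
\]
a constant depending only on $d$, $\lambda$, and $\psi$ (and independent of $\varepsilon$ and of $(y_\Sigma, s)$), one obtains the claimed uniform bound. There is no serious obstacle in this argument; the only technical point worth noting is the measure-zero exceptional set $E_{y_\Sigma, s, \varepsilon}$, which is harmless precisely because $d \ge 2$.
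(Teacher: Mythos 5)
Your proof is correct and follows essentially the same route as the paper: apply the mean-value estimate \eqref{estimate_diff_G_lambda} with $\tau = -s\nu(y_\Sigma)$, then reduce to $\|\nabla G_\lambda\|_{L^1(\mathbb{R}^d;\mathbb{C}^d)}$ via Proposition~\ref{proposition_asymptotics_G_lambda}~(ii). The only cosmetic difference is that you use Tonelli plus translation invariance of Lebesgue measure to absorb the $\theta$-integral, whereas the paper performs an explicit $(d{+}1)$-dimensional change of variables with unit Jacobian; these are the same computation in slightly different dress.
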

\begin{proof}
  Let $y_{\Sigma} \in \Sigma$ and $s \in (-1, 1)$ be fixed. Since 
  $x - y_{\Sigma} - \varepsilon s \theta \nu(y_{\Sigma}) \neq 0$ for a.e. $x \in \mathbb{R}^d$,
  it follows from~\eqref{estimate_diff_G_lambda} (with $\tau(x, y_\Sigma, s, t) = -s \nu(y_\Sigma)$ implying
  $\|\tau\|_{L^\infty} = 1$)
  \begin{equation*}
    \begin{split}
      \int_{\mathbb{R}^d} \big| \big(G_{\lambda}(x - y_{\Sigma} - \varepsilon s \nu(y_{\Sigma})) - 
          G_{\lambda}(x - y_{\Sigma})\big) \psi(y_{\Sigma}, s) \big| \mathrm{d} x 
      &\leq \varepsilon \| \psi \|_{L^{\infty}} \int_{\mathbb{R}^d} \int_0^1 \big| \nabla G_{\lambda}(x - y_{\Sigma} - \varepsilon s \theta \nu(y_{\Sigma})) \big|
          \mathrm{d} \theta \mathrm{d} x.
    \end{split}
  \end{equation*}
  Now consider the bijective transformation 
  $T: \mathbb{R}^d \times (0, 1) \rightarrow \mathbb{R}^d \times (0, 1)$ 
  given by
  \begin{equation*}
    \begin{pmatrix} \xi \\ \phi \end{pmatrix} = T (x, \theta) 
    := \begin{pmatrix} x - y_{\Sigma} - \varepsilon s \theta \nu(y_{\Sigma})
    \\ \theta \end{pmatrix}.
  \end{equation*}  
  Note that $T$ is differentiable and that its Jacobian matrix is given by
  \begin{equation*}
    D T (x,\theta) = \begin{pmatrix}  \mathds{1}_d & - \varepsilon s \nu(y_{\Sigma}) \\ 
    0 & 1 \end{pmatrix},
  \end{equation*}
  where $\mathds{1}_d$ is the identity matrix in $\mathbb{R}^{d \times d}$. 
  Hence $|\det D T(x,\theta)| = 1$ and we conclude that
  \begin{equation*}
    \begin{split}
      \int_{\mathbb{R}^d}\! \big| \big(G_{\lambda}(x - y_{\Sigma} - \varepsilon s \nu(y_{\Sigma})) \!-\! 
          G_{\lambda}(x - y_{\Sigma})\big) \psi(y_{\Sigma}, s) \big| \mathrm{d} x 
      \!\leq\! \varepsilon \| \psi \|_{L^{\infty}}\! \int_{\mathbb{R}^d}\! \int_0^1\! \big| \nabla G_{\lambda}(x - y_{\Sigma} - \varepsilon s \theta \nu(y_{\Sigma})) \big|
          \mathrm{d} \theta \mathrm{d} x& \\
      = \varepsilon \| \psi \|_{L^{\infty}} \int_{\mathbb{R}^d} \int_0^1 \big| \nabla G_{\lambda}(\xi) \big|
          \mathrm{d} \phi \mathrm{d} \xi 
          = \varepsilon \| \psi \|_{L^{\infty}} \int_{\mathbb{R}^d} \big| \nabla G_{\lambda}(\xi) \big| \mathrm{d} \xi&,
    \end{split}
  \end{equation*}
  where we used in the last step that the integrand was independent of $\phi$. 
  Since the last integral is finite by Proposition~\ref{proposition_asymptotics_G_lambda}~(ii), the proof is complete.
\end{proof}

The next lemma contains an auxiliary estimate that is needed to 
prove the final two integral bounds.

\begin{lem} \label{lemma2}
  Let $\Sigma$ be a $C^2$-smooth hypersurface satisfying 
  Hypothesis \ref{hypothesis_hypersurface}, let $\lambda \in (-\infty, 0)$, let $r_0 > 0$ and let $\widetilde{\Sigma} \subset \Sigma$
  be measurable.
  Then there exists a constant $C = C(r_0, d, \lambda, \Sigma) > 0$ such that 
  \begin{equation*}
    \int_{\widetilde{\Sigma}} \int_{-1}^1 \
        \big| \nabla G_\lambda(x - y_\Sigma - \varepsilon \theta s \nu(y_\Sigma)) \big| \mathrm{d} s \mathrm{d} \sigma(y_\Sigma)
       \leq C
  \end{equation*}
  for all sufficiently small $\varepsilon > 0$, all $\theta \in (0, 1)$ and all 
  $x \in \mathbb{R}^d$ satisfying $\mathrm{dist}(x, \widetilde{\Sigma}) \geq r_0$.
\end{lem}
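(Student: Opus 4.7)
The plan is to reduce the integral to a bound that follows from the asymptotics of $\nabla G_\lambda$ in Proposition~\ref{proposition_asymptotics_G_lambda}~(ii) combined with the surface measure estimate in Lemma~\ref{lemma1}~(i). The crucial observation is that the distance assumption $\mathrm{dist}(x,\widetilde\Sigma)\geq r_0$, together with smallness of $\varepsilon$, keeps the argument of $\nabla G_\lambda$ bounded away from its singularity.

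More precisely, I would first restrict to $\varepsilon\in(0,r_0/2]$. Then for any $y_\Sigma\in\widetilde\Sigma$, $s\in(-1,1)$ and $\theta\in(0,1)$, since $|\nu(y_\Sigma)|=1$, we have
\[
|x-y_\Sigma-\varepsilon\theta s\nu(y_\Sigma)|\geq |x-y_\Sigma|-\varepsilon \geq r_0/2,
\]
and also $|x-y_\Sigma-\varepsilon\theta s\nu(y_\Sigma)|\geq |x-y_\Sigma|/2$ (say), provided $|x-y_\Sigma|\geq 2\varepsilon$, which holds because $|x-y_\Sigma|\geq r_0\geq 2\varepsilon$.

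Next I would split $\widetilde\Sigma=\widetilde\Sigma_{\rm near}\cup\widetilde\Sigma_{\rm far}$ with $\widetilde\Sigma_{\rm near}:=\{y_\Sigma\in\widetilde\Sigma:|x-y_\Sigma|\leq 2R/\sqrt{-\lambda}\}$ and $\widetilde\Sigma_{\rm far}:=\widetilde\Sigma\setminus\widetilde\Sigma_{\rm near}$, where $R=R(d)$ is as in Proposition~\ref{proposition_asymptotics_G_lambda}. On the near part, $|\nabla G_\lambda|$ is uniformly bounded on the annulus $\{r_0/2\leq |z|\leq 2R/\sqrt{-\lambda}+1\}$ by some constant $c_1=c_1(r_0,d,\lambda)$ (via the case distinction in Proposition~\ref{proposition_asymptotics_G_lambda}~(ii)); since $\widetilde\Sigma_{\rm near}\subset B(x,2R/\sqrt{-\lambda})$, Lemma~\ref{lemma1}~(i) gives $\sigma(\widetilde\Sigma_{\rm near})\leq \widetilde C_1(2R/\sqrt{-\lambda})^{d-1}$, and integrating over $s\in(-1,1)$ produces a constant bound $c_2=c_2(r_0,d,\lambda,\Sigma)$.

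On the far part, I would use exponential decay: for small $\varepsilon$ and $y_\Sigma\in\widetilde\Sigma_{\rm far}$ one has $|x-y_\Sigma-\varepsilon\theta s\nu(y_\Sigma)|\geq R/\sqrt{-\lambda}$, so Proposition~\ref{proposition_asymptotics_G_lambda}~(ii) yields
\[
|\nabla G_\lambda(x-y_\Sigma-\varepsilon\theta s\nu(y_\Sigma))|\leq C e^{-\sqrt{-\lambda}|x-y_\Sigma-\varepsilon\theta s\nu(y_\Sigma)|}\leq C e^{\sqrt{-\lambda}\varepsilon}\,e^{-\sqrt{-\lambda}|x-y_\Sigma|}.
\]
Decomposing $\widetilde\Sigma_{\rm far}$ into the dyadic/unit annuli $B_n:=\{y_\Sigma\in\widetilde\Sigma_{\rm far}:n\leq\sqrt{-\lambda}|x-y_\Sigma|<n+1\}$, which each satisfy $B_n\subset B(x,(n+1)/\sqrt{-\lambda})$, Lemma~\ref{lemma1}~(i) gives $\sigma(B_n)\leq \widetilde C_1((n+1)/\sqrt{-\lambda})^{d-1}$. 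Integrating over $s\in(-1,1)$ and summing produces
\[
\int_{\widetilde\Sigma_{\rm far}}\!\int_{-1}^1|\nabla G_\lambda(\cdots)|\,\mathrm{d} s\,\mathrm{d}\sigma(y_\Sigma)\leq c_3\sum_{n\geq 1}(n+1)^{d-1}e^{-n},
\]
a convergent series yielding a constant depending only on $d,\lambda,\Sigma$. Setting $C$ to be the sum of the near and far contributions gives the required uniform bound. No step here is a real obstacle; the only thing to be careful about is verifying that all constants are independent of $x,\theta,\varepsilon$ and of the particular measurable subset $\widetilde\Sigma$, which is automatic from the construction above.
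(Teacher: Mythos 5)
Your proposal is correct and follows essentially the same strategy as the paper: use $\mathrm{dist}(x,\widetilde\Sigma)\geq r_0$ together with $\varepsilon\leq r_0/2$ to keep the argument of $\nabla G_\lambda$ bounded away from the singularity, decompose $\widetilde\Sigma$ into annuli around $x$, control the surface measure of each annulus via Lemma~\ref{lemma1}~(i), and sum using the exponential decay from Proposition~\ref{proposition_asymptotics_G_lambda}~(ii). The only noticeable difference is cosmetic: you explicitly split into a near region (where $|\nabla G_\lambda|$ is bounded by continuity on a compact annulus) and a far region (where the exponential bound literally applies), whereas the paper applies the exponential bound directly on all the annuli $A_n=\{r_0+n-1<|x-y_\Sigma|<r_0+n\}$, tacitly absorbing the short-range case $r_0/2\leq|z|\leq R/\sqrt{-\lambda}$ into an enlarged constant $C_1$; your split makes that step a bit more explicit but does not change the substance.
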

\begin{proof}
	Let $x \in \dR^d$ with $\mathrm{dist}(x, \widetilde{\Sigma}) \geq r_0$
	and $\theta \in (0, 1)$ be fixed. 
	We are going to show first that
	\begin{equation} \label{prop_app_3_label1}
	\begin{split}
		\int_{\widetilde{\Sigma}}
		\big| \nabla G_\lm\big(x - y_\Sigma - \eps\tt s \nu(y_\Sigma)) \big|
        \dd \s(y_\Sigma)  
        \leq \widetilde{C}
	\end{split}
	\end{equation}
     with $\wt C$ independent of $x, s, \theta$ and $\varepsilon$. 
  For this define the sets
  \begin{equation*}
     A_n := \left\{ y_{\Sigma} \in \widetilde{\Sigma}: 
         r_0 + n - 1 < |x - y_{\Sigma}| < r_0 + n \right\}, \qquad n \in \mathbb{N}.
  \end{equation*}
   Since it holds for $\eps \le  \frac{r_0}{2}$
    and all $(y_\Sigma,s) \in \widetilde{\Sigma}\times (-1,1)$ 
	\begin{equation*} 
		\big| x - y_\Sigma - \eps\tt s \nu(y_\Sigma) \big|
	    \geq 
	    | x - y_\Sigma|  - \eps \ge\frac{r_0}{2},
	\end{equation*}
	%
 	it follows from Proposition~\ref{proposition_asymptotics_G_lambda}\,(ii)
	that for $y_\Sigma \in A_n \subset \widetilde{\Sigma}$
	\begin{equation*}\label{bound_green_grad}
	   	\big| 
	   	\nabla G_\lm(x - y_\Sigma - \eps\tt s \nu(y_\Sigma)) 
    	\big|
        \leq C_1 e^{-\sqrt{-\lambda} |x - y_{\Sigma} - \eps\tt s \nu(y_\S)|}
	    \leq C_2 e^{-\sqrt{-\lambda} |x - y_\Sigma|} 
		\leq C_2 e^{-\sqrt{-\lambda} (r_0 + n - 1)}.
    \end{equation*}
  Finally, since $A_n \subset B( x, r_0+n)$
  we can employ Lemma \ref{lemma1} (i) and get that
  \begin{equation*}
    \begin{split}
      \int_{\widetilde{\Sigma}}
		\big| \nabla G_\lm\big(x - y_\Sigma - \eps\tt s \nu(y_\Sigma)) \big|
        &\dd \s(y_\Sigma)  
      = \sum_{n=1}^\infty \int_{A_n} \big| \nabla G_\lm\big(x - y_\Sigma - \eps\tt s \nu(y_\Sigma)) \big| \mathrm{d} \sigma(y_\Sigma) \\
          &\leq C_2 \sum_{n=1}^{\infty} \int_{A_n} e^{-\sqrt{-\lambda} (r_0 + n - 1)} \mathrm{d} \sigma(y_\Sigma) 
      \leq C_3 \sum_{n=1}^{\infty} e^{-\sqrt{-\lambda} \cdot n} (r_0+n)^{d-1} 
      = C_4.
    \end{split}
  \end{equation*}
	Thus, the estimate \eqref{prop_app_3_label1} is true. Finally, interchanging the order of integration we obtain
	\begin{equation*} 
    \begin{split}
	   \int_{\widetilde{\Sigma}} \int_{-1}^1 
        \big| \nabla G_\lambda(x - y_\Sigma - \varepsilon \theta s \nu(y_\Sigma)) \big| \mathrm{d} s \mathrm{d} \sigma(y_\Sigma)
        \leq \int_{-1}^1
        C_4 \mathrm{d} s \leq C.
	\end{split}
	\end{equation*}
     This is the claimed result.
\end{proof}

The next lemma contains the main tool to
prove the final two integral bounds.
Here, we apply Lemma~\ref{lemma2} for $\widetilde{\Sigma} = \Sigma$
and $\widetilde{\Sigma} = \Sigma \setminus B(x, r_0)$ with $x \in \mathbb{R}^d$.
Moreover, we make use of the notation $C([0, 1]; \mathbb{R}^d)$ for the set of all continuous
vector valued functions defined on the interval $[0, 1]$.

\begin{lem} \label{proposition_difficult}
  Let $\Sigma$ be a $C^2$-smooth hypersurface satisfying 
  Hypothesis \ref{hypothesis_hypersurface} and let $\lambda \in (-\infty, 0)$.
  Then there exists a constant $C = C(d, \lambda, \Sigma) > 0$ such that for all $\varepsilon > 0$ sufficiently small
  and all $x \in C([0, 1]; \mathbb{R}^d)$ it holds
  \begin{equation*}
     \int_{\Sigma} \int_{-1}^1 \int_0^1 
        \big| \nabla G_\lambda(x(\theta) - y_\Sigma - \varepsilon \theta s \nu(y_\Sigma)) \big| 
        \mathrm{d} \theta \mathrm{d} s \mathrm{d} \sigma(y_\Sigma) \leq C \big(1 + |\ln \varepsilon|\big).
  \end{equation*}
\end{lem}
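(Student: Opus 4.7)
The approach is to reduce the triple integral to a tube integral over $\Omega_{\varepsilon\theta}$ and estimate the latter via the asymptotics of $\nabla G_\lambda$ from Proposition~\ref{proposition_asymptotics_G_lambda}\,(ii) together with the volume bound Lemma~\ref{lemma1}\,(ii). By Fubini, the claim reduces to showing that
\[
I(\theta) := \int_\Sigma \int_{-1}^1 \bigl| \nabla G_\lambda(x(\theta) - y_\Sigma - \varepsilon\theta s\,\nu(y_\Sigma)) \bigr|\, \mathrm{d}s\, \mathrm{d}\sigma(y_\Sigma) \le C\bigl(1 + |\ln(\varepsilon\theta)|\bigr)
\]
for a.e.\ $\theta \in (0, 1)$, because integrating over $\theta$ and using $|\ln(\varepsilon\theta)| \le |\ln\varepsilon| + |\ln\theta|$ together with $\int_0^1 |\ln\theta|\, \mathrm{d}\theta = 1$ then yields the stated bound $C(1 + |\ln\varepsilon|)$.

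For the first step I would fix $\theta \in (0, 1]$, substitute $\eta = \varepsilon\theta s$ in the $s$-integration to obtain an integral over $\Sigma \times (-\varepsilon\theta, \varepsilon\theta)$, and apply Proposition~\ref{corollary_integration_tube}\,(ii) in the reverse direction (legitimate since $\varepsilon\theta < \beta$ for $\varepsilon < \beta$) together with the uniform lower bound $\det(1 - \eta W(y_\Sigma)) \ge 1 - \eta_0 > 0$ from Hypothesis~\ref{hypothesis_hypersurface}\,(b). This yields
\[
I(\theta) \le \frac{1}{(1-\eta_0)\,\varepsilon\theta}\, J(\theta), \qquad J(\theta) := \int_{\Omega_{\varepsilon\theta}} |\nabla G_\lambda(x(\theta) - y)|\, \mathrm{d}y,
\]
so it is enough to prove $J(\theta) \le c\, \varepsilon\theta\, (1 + |\ln(\varepsilon\theta)|)$ with a constant independent of $x(\theta)$ and $\theta$.

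To estimate $J(\theta)$ I would split $\Omega_{\varepsilon\theta}$ into the near part $\Omega_{\varepsilon\theta} \cap B(x(\theta), R_0)$ with $R_0 := R/\sqrt{-\lambda}$ and its complement. On the far part the exponential bound $|\nabla G_\lambda(z)| \le C e^{-\sqrt{-\lambda}|z|}$ from Proposition~\ref{proposition_asymptotics_G_lambda}\,(ii), combined with Lemma~\ref{lemma1}\,(ii) and a standard layer-cake / distribution-function argument, easily gives a contribution of order $\varepsilon\theta$. On the near part I would bound the integrand by $c\, |x(\theta) - y|^{1-d}$, write $|z|^{1-d} = (d-1) \int_{|z|}^\infty r^{-d}\, \mathrm{d}r$, apply Fubini, and then use the two-sided volume estimate $\Lambda_d\bigl(\Omega_{\varepsilon\theta} \cap B(x(\theta), r)\bigr) \le \min\bigl(\widetilde C_2\, \varepsilon\theta\, r^{d-1},\, c_d\, r^d\bigr)$ (the second bound being the trivial ball estimate). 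Splitting the $r$-integration at the crossover $r \sim \varepsilon\theta$ between these two regimes produces exactly the factor $\varepsilon\theta\, (1 + |\ln(\varepsilon\theta)|)$.

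The main obstacle is the near-part estimate for $J(\theta)$: the singularity $|z|^{1-d}$ of $\nabla G_\lambda$ exactly matches the codimension of $\Sigma$, so integrability is obtained only because the tube $\Omega_{\varepsilon\theta}$ has thickness $\varepsilon\theta$. The logarithmic factor arises unavoidably at the crossover $r \sim \varepsilon\theta$ between the ball-type volume bound $c_d r^d$ (sharp for $r \lesssim \varepsilon\theta$, when the tube momentarily contains an entire ball) and the layer-type bound $\widetilde C_2\, \varepsilon\theta\, r^{d-1}$ (sharp for $r \gtrsim \varepsilon\theta$). The subsequent $\theta$-integration just adds a harmless constant while preserving the logarithm in $\varepsilon$.
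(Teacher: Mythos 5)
Your argument is correct and follows a genuinely different route from the paper's. The paper fixes a chart, proves a technical ``Claim'' about a minimizing pair $(u,t)\in\overline{U_i}\times[-1,1]$ for the distance $|x(\theta)-\varphi_i(v)-\varepsilon\theta s\,\nu(\varphi_i(v))|$, and then passes to spherical coordinates in the chart variable $v$ with the substitutions $z=r^{d-1}$, $\zeta=s-t$ to extract the $\ln$; this requires Hypothesis~\ref{hypothesis_hypersurface}\,(c) directly and works entirely at the level of the parametrized surface integral. You instead scale out $\varepsilon\theta$ to turn the $(y_\Sigma,s)$-integral into a genuine tube integral over $\Omega_{\varepsilon\theta}$ via Proposition~\ref{corollary_integration_tube}\,(ii) and Hypothesis~\ref{hypothesis_hypersurface}\,(b), and then produce the logarithm by a layer-cake argument combining the two competing volume bounds $\Lambda_d(\Omega_{\varepsilon\theta}\cap B(x(\theta),r))\le\min(\widetilde C_2\,\varepsilon\theta\,r^{d-1},\,c_d r^d)$ at the crossover scale $r\sim\varepsilon\theta$. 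This cleanly reuses Lemma~\ref{lemma1}\,(ii) and avoids the chart-by-chart Claim altogether; the exponential tail is handled exactly as in Lemma~\ref{lemma2}. Both proofs yield the bound $C(1+|\ln\varepsilon|+|\ln\theta|)$ on the inner integral and finish with $\int_0^1|\ln\theta|\,\mathrm{d}\theta=1$; yours is shorter and, to my eye, conceptually sharper because it isolates the origin of the logarithm as a geometric volume-crossover rather than burying it in a coordinate substitution.
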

\begin{proof}
	Let $x \in C([0, 1]; \mathbb{R}^d)$ be fixed and choose $r_0 > 0$. 	
	Interchanging the order of integration we get
	\begin{equation*}
     \begin{split}
    	\int_{\Sigma} \! \int_{-1}^1 \! \int_0^1 
	   \big| \nabla G_{\lambda}(x(\theta) - y_{\Sigma} - \eps \theta s \nu(y_{\Sigma})) 
        \big| \dd \theta \dd s \dd \sigma(y_\Sigma) \!=\! 
	\int_0^1 \! \int_{\Sigma} \! \int_{-1}^1 
	   \big| \nabla G_{\lambda}(x(\theta) - y_{\Sigma} - \eps \theta s \nu(y_{\Sigma})) 
        \big|  \dd s \dd \sigma(y_\Sigma) \dd \theta.
     \end{split}
	\end{equation*}
	We are going to find a suitable bound for the integral with respect to $\mathrm{d} s$ and $\mathrm{d} \sigma$.
	So let $\theta \in (0, 1)$ be fixed.
	If $\mathrm{dist}(x(\theta), \Sigma) > r_0$, 
	Lemma \ref{lemma2} yields immediately that
	\begin{equation*}
     \begin{split}
    	 \int_{\Sigma} \int_{-1}^1
	   \big| \nabla G_{\lambda}(x(\theta) - y_{\Sigma} - \eps \theta s \nu(y_{\Sigma})) 
        \big|  \dd s \dd \sigma(y_\Sigma) 
	\leq C_1 
     \end{split}
	\end{equation*}
	for all $\varepsilon > 0$ sufficiently small and thus, the claimed result is true in this case.

	If $\mathrm{dist}(x(\theta), \Sigma) \leq r_0$ we split the hypersurface $\Sigma$ into two disjoint
	parts 
	\[
		\Sigma_1 := \Sigma\cap B(x(\theta),r_0)\quad 
		\text{and} 
	  	\quad
	  	\Sigma_2 := \Sigma \setminus B(x(\theta),r_0), \quad \Sigma = \Sigma_1\cup\Sigma_2,
	\]
       and we define the following two auxiliary quantities
	\[
		\cI_{j}(x(\theta), \theta, \eps) := \int_{\Sigma_{j}} \int_{-1}^1 
	   \big| \nabla G_{\lambda}(x(\theta) - y_{\Sigma} - \eps \theta s \nu(y_{\Sigma})) 
        \big| \dd s \dd \sigma(y_\Sigma), \quad j \in \{ 1, 2 \}.
	\]
	Hence, it holds that
	\begin{equation}\label{eq:Ipm}
	   \int_{\Sigma} \int_{-1}^1 
	   \big| \nabla G_{\lambda}(x(\theta) - y_{\Sigma} - \eps \theta s \nu(y_{\Sigma})) 
        \big| \dd s \dd \sigma(y_\Sigma) = \cI_1(x(\theta), \theta, \eps) + \cI_2(x(\theta), \theta, \eps).
	\end{equation}
	Again Lemma \ref{lemma2} implies that $\cI_2(x(\theta), \theta, \eps) \leq C_1$ for all sufficiently small $\varepsilon > 0$
	independent of $x(\theta)$ and~$\theta$.
	
	It remains to estimate $\cI_1(x(\theta), \theta, \eps)$.
	Let $\{ \varphi_i, U_i, V_i \}_{i \in I}$ be a parametrization of $\Sigma$ as in Definition \ref{definition_hypersurface}.
  	By~\eqref{integral} we get
  	\begin{equation*}
      	\cI_1(x(\theta), \theta, \eps) 
	   	= 
	   	\sum_{i \in I} \int_{-1}^1 \int_{\varphi_i^{-1} (\Sigma_1)} \chi_i(\varphi_i(v))
        \big| 
        \nabla G_{\lambda}\big(x(\theta) - \varphi_i(v) 
        - \eps \theta s \nu(\varphi_i(v)) \big) \big| 
        \sqrt{\det G_i(v)} \dd v \dd s,
  	\end{equation*}
	where $\{ \chi_i \}_{i \in I}$ is a partition of unity subordinate to $\{ V_i \}_{i \in I}$.
    Since $\det G_i(v)$ is bounded by
    Definition~\ref{definition_hypersurface}~(e),
    we can continue estimating
	\begin{equation} \label{estimate_prop_diff1}
	\begin{split}
    	\cI_1(x(\theta), \theta, \eps) 
    	&=
    	\sum_{i \in I} \int_{-1}^1\int_{\varphi_i^{-1}(\Sigma_1)}
    	\chi_i(\varphi_i(v)) \big| \nabla G_{\lambda}\big(x(\theta) - \varphi_i(v) 
    	- \eps\tt s \nu(\varphi_i(v)) \big) \big|
        \sqrt{\det G_i(v)} \dd v \dd s \\
      	&\leq 
      	C_2 \sum_{i \in I} \int_{-1}^1\int_{\varphi_i^{-1} (\Sigma_1)}
        \big| \nabla G_{\lambda}\big(x(\theta) - \varphi_i(v) - 
        \eps\tt s \nu(\varphi_i(v)) \big) \big|
        \dd v \dd s \\
      &\leq C_3 \sum_{i \in I} \int_{-1}^1
       \int_{\varphi_i^{-1} (\Sigma_1)}
        \big| x(\theta) - \varphi_i(v) - \eps\tt s \nu(\varphi_i(v)) \big|^{1-d}
        \dd v \dd s,
    \end{split}
 	\end{equation}
	where we used an estimate for $|\nabla G_\lambda|$ that follows 
	from Proposition~\ref{proposition_asymptotics_G_lambda} (ii).
 	It remains to find a suitable bound for 
	$\big| x(\theta) - \varphi_i(v) - \eps\tt s \nu(\varphi_i(v)) \big|^{1-d}$.

	{\rm \bf Claim.} Let $i \in I$ with $\varphi_i^{-1}(\Sigma_1) \neq \emptyset$ be fixed.
	Then there exists a constant $\widetilde{C} = \widetilde{C}(i, \Sigma)$, $u = u(x(\theta), \theta, \varepsilon, i) \in \overline{U}_i$ 
	and $t = t(x(\theta), \theta, \varepsilon, i) \in [-1, 1]$ with the following properties:
	\begin{itemize}
		\item[(a)] for any $(v, s) \in \varphi_i^{-1}(\Sigma_1) \times (-1, 1)$ it holds
		\begin{equation*}
		|x(\theta) - \varphi_i(v) - \varepsilon \theta s \nu(\varphi_i(v))|^{1-d} 
		\leq \widetilde{C} (|u - v|^{d-1} + (\varepsilon \theta |t - s|)^{d-1})^{-1};
		\end{equation*}
		\item[(b)] $\varphi_i^{-1}(\Sigma_1) \subset B(u, 2 r_0/c)$.
	\end{itemize}
	Here, $c$ is the same constant as in Hypothesis \ref{hypothesis_hypersurface} (c).
	We mention that $u$ and $t$ also depend on $d$, $\Sigma$ and~$r_0$, but this is not of importance in the following.

	In order to prove the claim, set 
	\[
		\sfh := \inf \big\{ | x(\theta) - \varphi_i(v) - \eps \theta s \nu(\varphi_i(v)) |
		\colon
		 (v, s) \in \varphi_i^{-1}(\Sigma_1) \times (-1, 1) \big\}.
	\]	 
	Choose a sequence $(u_n, t_n) \in \varphi_i^{-1}(\Sigma_1) \times (-1, 1)$ 
	such that
  	\[
  		| x(\theta) - \varphi_i(u_n) - \eps \theta t_n \nu(\varphi_i(u_n)) | \arr \sfh
  		\quad\text{and}\quad
		\varphi_i(u_n) + \eps \theta t_n \nu(\varphi_i(u_n)) \arr y\in \dR^d;
  	\]
	this is possible since the set
  	$\{ \varphi_i(v) + \varepsilon \theta s \nu(\varphi_i(v))\colon 
  	(v, s) \in \varphi_i^{-1}(\Sigma_1) \times (-1, 1) \}$ is bounded in $\mathbb{R}^{d}$.
	Using Hypothesis~\ref{hypothesis_hypersurface}~(c),
	we find that
	\begin{equation*}
	    c^2 \big( |u_n - u_m|^2 + (\eps \theta)^2 |t_n - t_m|^2 \big)
	    \leq 
	    |\varphi_i(u_n) + \eps \theta t_n \nu(\varphi_i(u_n)) - \varphi_i(u_m) - \eps \theta t_m \nu(\varphi_i(u_m))|^2.
	\end{equation*}
	Therefore, $(u_n)$ and $(t_n)$ are Cauchy sequences.
	Set $\lim u_n =: u \in \ov{U_i}$ 
	and $\lim t_n =: t \in [-1, 1]$. 
	Using a continuity argument and again 
	Hypothesis \ref{hypothesis_hypersurface} (c), 
	we find for all $v \in U_i$ and all $s \in (-1, 1)$ that
	\begin{equation} \label{prop_app_3_label2}
    \begin{split}
    	c^2\big( |u - v|^2 &+ (\varepsilon \theta)^2 |t - s|^2 \big)
        = 
        c^2 \lim_{n \arr \infty} 
        \big( |u_n - v|^2 + (\eps \theta)^2 |t_n - s|^2 \big) \\
      	&\leq 
      	\lim_{n \arr \infty} 
      	|\varphi_i(u_n) + \eps \theta t_n \nu(\varphi_i(u_n)) - \varphi_i(v) - 
      	\eps \theta s \nu(\varphi_i(v))|^2
	    = 
	    |y - \varphi_i(v) - \eps \theta s \nu(\varphi_i(v))|^2.
    \end{split}
	\end{equation}
	Finally, 
	for $v \in \varphi_i^{-1}(\Sigma_1)$ and $s \in (-1, 1)$ 
	it holds by the triangle inequality
	\begin{equation*}
    	|y - \varphi_i(v) - \varepsilon \theta s \nu(\varphi_i(v))| 
        \leq |x(\theta) - \varphi_i(v) - \varepsilon \theta s \nu(\varphi_i(v))| + |x(\theta) - y|
        \leq 2 |x(\theta) - \varphi_i(v) - \varepsilon \theta s \nu(\varphi_i(v))|
	\end{equation*}
	due to the construction of $y$. Hence, we get that
	\begin{equation} \label{prop_app_3_label3}
		|x(\theta) - \varphi_i(v)- \varepsilon \theta s \nu(\varphi_i(v))| 
		\geq \frac{1}{2} |y - \varphi_i(v) - \varepsilon \theta s \nu(\varphi_i(v))|.
	\end{equation}
	In particular, this and \eqref{prop_app_3_label2} with $s=0$ imply that
	\begin{equation*}
		c |u - v| \leq c \big( |u - v|^2 + (\varepsilon \theta t)^2 \big)^{1/2} \leq |y - \varphi_i(v)| \leq 2 |x(\theta) - \varphi_i(v)| \leq 2 r_0
	\end{equation*}
	and thus, $\varphi_i^{-1} (\Sigma_1) \subset B(u, 2 r_0/c)$.
	This is item (b) of the claim.
	Furthermore, using for $a, b > 0$ the inequality
	\begin{equation*}\label{eq:simple_inequality}
		a^{d-1} + b^{d-1} \le (a+b)^{d-1} \quad \Leftrightarrow \quad
		(a + b)^{1-d} \leq \big( a^{d-1} + b^{d-1} \big)^{-1}
	\end{equation*}	
	and equations \eqref{prop_app_3_label2} and \eqref{prop_app_3_label3} we obtain
	\begin{equation*}
    \begin{split}
    	\big| x(\theta) - \varphi_i(v)  - \eps \theta s \nu(\varphi_i(v)) \big|^{1-d}
        & \leq 
        2^{d-1} 
        \big| y - \varphi_i(v)  - \eps \theta s \nu(\varphi_i(v)) \big|^{1-d} 
	    \leq 
	    C_4\big( |u - v|^2 + (\eps \theta)^2 |t - s|^2 \big)^{(1-d)/2} \\
	    &\leq 
	    C_5
	    \big( |u - v| + \eps \theta |t - s| \big)^{1-d} 
	    \leq 
	    C_5 \left(|u - v|^{d-1} + 
	    (\eps \theta |t - s|)^{d-1}\right)^{-1}.
    \end{split}
	\end{equation*}
	Thus, also assertion (a) of the claim is true.

	Using the result of the above claim, we can continue with the estimate \eqref{estimate_prop_diff1}.
	Employing a transition to spherical coordinates and that
	$\varphi_i^{-1} (\S_1) \subset B(u, 2 r_0/c)$, we get 
      for all sufficiently small $\eps >0$ that
	\begin{equation*}
	\begin{split}
    	\int_{-1}^1
      	\int_{\varphi_i^{-1} (\S_1)} 
        \big| x(\theta) - \varphi_i(v) - \eps\tt s \nu(\varphi_i(v)) \big|^{1-d}
        \dd v \dd s \!
        & \leq \! C_5
       	\int_{-1}^1
      	\int_{B(u,2 r_0/c)} 
	    \left(|u - v|^{d-1} + 
	    (\eps \theta |t - s|)^{d-1}\right)^{-1}
        \dd v \dd s\\
        & = C_{6}
        \int_{-1}^1
      	\int_0^{2 r_0/c} 
	    r^{d-2}\left(r^{d-1} + (\eps \theta |t - s|)^{d-1}\right)^{-1}
        \dd r \dd s.
    \end{split}    
	\end{equation*}
	We employ now the substitutions $z := r^{d-1}$ and $\zeta := s - t$.
	Note that $s, t \in [-1, 1]$ implies $\zeta \in [-2, 2]$. Hence, we get
	\begin{equation*}
	\begin{split}
    	\int_{-1}^1
      	\int_{\varphi_i^{-1} (\S_1)} 
        \big| x(\theta) - \varphi_i(v) - \eps\tt s \nu(\varphi_i(v)) &\big|^{1-d} 
        \dd v \dd s
       \leq C_{6}
  	 \int_{-1}^1
      	\int_0^{2 r_0/c} 
	    r^{d-2}\left(r^{d-1} + (\eps \theta |t - s|)^{d-1}\right)^{-1}
        \dd r \dd s \\
        &\leq C_{7}
        \int_{-2}^2 \int_0^{(2 r_0/c)^{d-1}} 
	    \left(z + (\eps \theta |\zeta|)^{d-1}\right)^{-1}
        \dd z\dd \zeta \\
        &= C_{7}
        \int_{-2}^2 \bigg( \ln\bigg( \bigg(\frac{2 r_0}{c}\bigg)^{d-1} + (\eps \theta |\zeta|)^{d-1}\bigg) 
        - \ln \big((\eps \theta |\zeta|)^{d-1} \big)\bigg) \dd \zeta.
	\end{split}
	\end{equation*}
	For a fixed $r_0 > 0$ it holds that $\ln\left( \left(\frac{2 r_0}{c}\right)^{d-1} + (\eps \theta |\zeta|)^{d-1}\right) \leq C_{8}$
	independent of $\theta \in (0, 1)$. Therefore, we obtain
	\begin{equation*}
	\begin{split}
    	\int_{-1}^1
      	\int_{\varphi_i^{-1} (\S_1)} 
        \big| x(\theta) - \varphi_i(v) - \eps\tt s \nu(\varphi_i(v)) \big|^{1-d}
        \dd v \dd s
         &\leq C_{9}
        \int_{-2}^2 \big( 1 - \ln (\eps \theta |\zeta|)\big)
        \dd \zeta
           \le C_{10}\big( 1 + |\ln \eps| + |\ln \theta| \big).
    \end{split}    
	\end{equation*}
	Summing up all $i \in I$, we get
	\begin{equation*}
	\begin{split}
		\cI_1(x(\theta), \theta, \eps) 
  	  	&\leq C_3 \sum_{i \in I} \int_{-1}^1
      		 \int_{\varphi_i^{-1} (\Sigma_1)}
      		  \big| x(\theta) - \varphi_i(v) - \eps\tt s \nu(\varphi_i(v)) \big|^{1-d}
        	\dd v \dd s 
		\leq C_{11}\big( 1 + |\ln \eps| + |\ln \theta| \big).
	\end{split}
	\end{equation*}

	Thus, using \eqref{eq:Ipm} and the last estimate, it follows
	\begin{equation*}
     \begin{split}
    	\int_{\Sigma} \int_{-1}^1 
        \big| \nabla G_\lambda(x(\theta) - y_\Sigma - \varepsilon \theta s \nu(y_\Sigma)) \big| 
         \mathrm{d} s \mathrm{d} \sigma(y_\Sigma) 
	   &= \big(\cI_1(x(\theta), \theta, \eps) + \cI_2(x(\theta), \theta, \eps) \big)  
	   \leq C_{12} \big( 1 + |\ln \eps| + |\ln \theta| \big).
     \end{split}
	\end{equation*}
	This is the desired estimate, if $\mathrm{dist}(x, \Sigma) \leq r_0$.
	Integrating the last estimate with respect to $\theta$ we finally obtain
	\begin{equation*}
     \begin{split}
    	\int_0^1\int_{\Sigma} \int_{-1}^1 
        \big| \nabla G_\lambda(x(\theta) - y_\Sigma - \varepsilon \theta s \nu(y_\Sigma)) \big| 
         \mathrm{d} s \mathrm{d} \sigma(y_\Sigma) \mathrm{d} \theta
	    &\leq C_{13} \int_0^1\big( 1 + |\ln \eps| + |\ln \theta| \big)\mathrm{d} \theta
		\leq C_{14} \big( 1 + |\ln \varepsilon|\big),
     \end{split}
	\end{equation*}
	which leads to the statement of this lemma.
\end{proof}

In the following proposition we state the last two estimates that are needed to prove our main result.
They follow from Lemma \ref{proposition_difficult}, that is once applied for the case that
$x(\theta)$ is a constant function and once for $x(\theta)$ being linear affine in $\theta$.

\begin{prop} \label{proposition_appendix_3}
	Let $\Sigma$ be a $C^2$-smooth hypersurface satisfying 
	Hypothesis~\ref{hypothesis_hypersurface} and
	let $\lambda \in (-\infty, 0)$.
	\begin{itemize}
	\item[\rm (i)] Let
	$\psi \in L^{\infty}(\Sigma \times (-1, 1))$. 
	Then there exists a constant 
	$C = C(d,\lm,\S,\psi) > 0$
	such that the bound
	\begin{equation*}
	   \sup_{x \in \dR^d} 
	   \int_{\Sigma} \int_{-1}^1 
	   \big| \big(
          G_{\lambda}(x - y_{\Sigma} - \eps s \nu(y_{\Sigma})) 
          - 
          G_{\lambda}(x - y_{\Sigma}) 
       \big) \psi(y_\Sigma, s) \big| \dd s \dd \sigma(y_\Sigma) 
       \leq C \eps\big( 1 + |\ln \eps| \big)
	\end{equation*}
	holds for all sufficiently small $\eps > 0$.
	\item[\rm (ii)] Let
	$\omega, \psi \in L^{\infty}(\Sigma \times (-1, 1))$. 
	Then there exists a constant
	$C = C(d,\lm,\S,\omega,\psi) > 0$ 
	such that the bound
	\begin{equation*}
     \begin{split}
	\sup_{(x_\Sigma, t) \in \Sigma \times (-1, 1)} 
    	\int_{\Sigma} \int_{-1}^1 
    	\big| \omega(x_\Sigma, t) 
    	\big(
  	      G_{\lambda}(x_\Sigma + \eps t \nu(x_\Sigma) 
  	     	 - y_{\Sigma} -  \eps s \nu(y_{\Sigma}))& \\
  		- G_{\lambda}(x_\Sigma - y_{\Sigma}) 
          \big) \psi(y_\Sigma, s) &\big| \dd s \dd \sigma(y_\Sigma) 
          \leq C \eps \big(1+|\ln \eps|\big)
     \end{split}
	\end{equation*}
	holds for all sufficiently small $\eps > 0$.
	\end{itemize}
\end{prop}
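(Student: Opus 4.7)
The plan is to deduce both estimates directly from the pointwise bound \eqref{estimate_diff_G_lambda} combined with Lemma~\ref{proposition_difficult}. The key observation is that Lemma~\ref{proposition_difficult} allows the first argument $x(\theta)$ of $\nabla G_\lambda$ to be \emph{any} continuous curve, so we can absorb the perturbation on the $x$-side into $x(\theta)$ while keeping the perturbation on the $y_\Sigma$-side in the form $\varepsilon \theta s \nu(y_\Sigma)$ required by the lemma.

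For item (i), fix $x \in \mathbb{R}^d$ and $(y_\Sigma,s) \in \Sigma \times (-1,1)$. Apply \eqref{estimate_diff_G_lambda} with $\tau(x,y_\Sigma,s,t) := -s \nu(y_\Sigma)$, noting $\|\tau\|_{L^\infty} \leq 1$, to obtain
\begin{equation*}
\big|G_\lambda(x - y_\Sigma - \varepsilon s \nu(y_\Sigma)) - G_\lambda(x - y_\Sigma)\big|
\leq \varepsilon \int_0^1 \big|\nabla G_\lambda(x - y_\Sigma - \varepsilon \theta s \nu(y_\Sigma))\big|\,\mathrm{d}\theta.
\end{equation*}
Multiply by $|\psi(y_\Sigma,s)| \leq \|\psi\|_{L^\infty}$, integrate over $\Sigma \times (-1,1)$, apply Fubini, and invoke Lemma~\ref{proposition_difficult} with the constant curve $x(\theta) \equiv x$. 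The bound $C(1+|\ln\varepsilon|)$ supplied by the lemma is uniform in $x$, so taking the supremum over $x \in \mathbb{R}^d$ yields $C\varepsilon(1+|\ln\varepsilon|)$ as claimed.

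For item (ii), fix $(x_\Sigma,t) \in \Sigma \times (-1,1)$ and $(y_\Sigma,s) \in \Sigma \times (-1,1)$, and define
\begin{equation*}
g(\theta) := G_\lambda\bigl(x_\Sigma + \varepsilon \theta t \nu(x_\Sigma) - y_\Sigma - \varepsilon \theta s \nu(y_\Sigma)\bigr), \qquad \theta \in [0,1].
\end{equation*}
Since the argument of $G_\lambda$ is nonzero for a.e.\ $(x_\Sigma,y_\Sigma)$ by Hypothesis~\ref{hypothesis_hypersurface}, $g$ is differentiable and the chain rule plus Cauchy--Schwarz give
\begin{equation*}
|g(1) - g(0)| \leq \varepsilon(|t|+|s|) \int_0^1 \big|\nabla G_\lambda(x(\theta) - y_\Sigma - \varepsilon \theta s \nu(y_\Sigma))\big|\,\mathrm{d}\theta,
\end{equation*}
where $x(\theta) := x_\Sigma + \varepsilon \theta t \nu(x_\Sigma) \in C([0,1];\mathbb{R}^d)$. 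Note $|t|+|s| \leq 2$. Multiply by $|\omega(x_\Sigma,t)\psi(y_\Sigma,s)| \leq \|\omega\|_{L^\infty}\|\psi\|_{L^\infty}$, integrate with respect to $\mathrm{d} s \, \mathrm{d}\sigma(y_\Sigma)$, and apply Fubini. Lemma~\ref{proposition_difficult} applied to the continuous curve $x(\cdot)$ then produces the bound $C\varepsilon(1+|\ln\varepsilon|)$. Crucially, the constant in Lemma~\ref{proposition_difficult} does not depend on the particular curve $x(\cdot)$, so taking the supremum over $(x_\Sigma,t) \in \Sigma \times (-1,1)$ preserves the estimate.

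No serious obstacle is anticipated: the structural work has been done in Lemma~\ref{proposition_difficult} (whose proof handles the logarithmic singularity of $\nabla G_\lambda$ near $\Sigma$), and what remains is essentially a first-order Taylor expansion of $G_\lambda$ along straight-line paths. The only subtlety worth flagging is the choice in (ii) to simultaneously interpolate \emph{both} the $x_\Sigma$- and $y_\Sigma$-sided perturbations with the same parameter $\theta$, so that the resulting integrand fits the template of Lemma~\ref{proposition_difficult} with the $y_\Sigma$-perturbation scaled by $\varepsilon\theta s$ and the $x_\Sigma$-perturbation absorbed into the free curve $x(\theta)$.
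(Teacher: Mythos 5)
Your proof is correct and matches the paper's argument essentially line by line: item (i) applies the interpolation bound \eqref{estimate_diff_G_lambda} with $\tau=-s\nu(y_\Sigma)$ and Lemma~\ref{proposition_difficult} with the constant curve $x(\theta)\equiv x$, while item (ii) uses $\tau(x_\Sigma,y_\Sigma,s,t)=t\nu(x_\Sigma)-s\nu(y_\Sigma)$ (your direct chain-rule computation with the factor $|t|+|s|\le 2$ is the same estimate the paper gets from $\|\tau\|_{L^\infty}\le 2$) and the affine curve $x(\theta)=x_\Sigma+\varepsilon\theta t\nu(x_\Sigma)$. The key observation you flag at the end---absorbing the $x_\Sigma$-side perturbation into the free curve $x(\theta)$ while keeping the $y_\Sigma$-side perturbation in the $\varepsilon\theta s\nu(y_\Sigma)$ form required by Lemma~\ref{proposition_difficult}---is exactly the device the paper uses.
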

\begin{proof}
	(i) Let $x \in \mathbb{R}^d$ be fixed. 	
	We employ \eqref{estimate_diff_G_lambda} 
	(with $\tau(x, y_\Sigma, s, t) = -s \nu(y_{\Sigma})$ implying $\| \tau \|_{L^\infty} = 1$)
	and find that
	\begin{equation*} \label{eq:start1}
     \begin{split}
    	\int_{\Sigma} \int_{-1}^1 
	   \big| \big(
          G_{\lambda}(x - y_{\Sigma} - \eps s \nu(y_{\Sigma})) 
          &- G_{\lambda}(x - y_{\Sigma}) 
       \big) \psi(y_\Sigma, s) \big| \dd s \dd \sigma(y_\Sigma)  \\
	&\leq \varepsilon \| \psi \|_{L^\infty} \int_{\Sigma} \int_{-1}^1 \int_0^1
	   \big| \nabla G_{\lambda}(x - y_{\Sigma} - \eps \theta s \nu(y_{\Sigma})) 
        \big| \dd \theta \dd s \dd \sigma(y_\Sigma).
     \end{split}
	\end{equation*}
	Thus, Lemma \ref{proposition_difficult} yields the claimed assertion.

	(ii) Let $(x_\Sigma, t) \in \Sigma \times (-1, 1)$
	be fixed.
	Using \eqref{estimate_diff_G_lambda} 
	(with $x = x_\Sigma$ and $\tau(x_\Sigma, y_\Sigma, s, t) = t \nu(x_\Sigma) - s \nu(y_{\Sigma})$
    implying $\| \tau \|_{L^\infty} \leq 2$)
	we find that
	\begin{equation*} \label{eq:start2}
     \begin{split}
    	\int_{\Sigma} \int_{-1}^1 &
    	\big| \omega(x_\Sigma, t) 
    	\big(
  	      G_{\lambda}(x_\Sigma + \eps t \nu(x_\Sigma) 
  	     	 - y_{\Sigma} -  \eps s \nu(y_{\Sigma}))
  		- G_{\lambda}(x_\Sigma - y_{\Sigma}) 
          \big) \psi(y_\Sigma, s) \big| \dd s \dd \sigma(y_\Sigma) \\
	&\leq 2 \varepsilon \| \omega \|_{L^\infty} \| \psi \|_{L^\infty} \int_{\Sigma} \int_{-1}^1 \int_0^1
    	\big| \nabla G_\lambda\big(x_\Sigma - y_\Sigma + \varepsilon \theta (t \nu(x_\Sigma) - s \nu(y_\Sigma))\big)
          \big| \dd \theta \dd s \dd \sigma(y_\Sigma).
     \end{split}
	\end{equation*}
	Therefore, the claimed statement follows from Lemma \ref{proposition_difficult} 
	(with $x(\theta) = x_\Sigma + \varepsilon \theta t \nu(x_\Sigma)$).
\end{proof}

\section{Boundaries of bounded $C^2$-domains} \label{appendix_boundary}

In this appendix it is shown that the boundary of a bounded and simply connected $C^2$-domain is a $C^2$-hypersurface 
in the sense of Definition~\ref{definition_hypersurface} that satisfies Hypothesis~\ref{hypothesis_hypersurface}. 
Recall that a bounded $C^2$-domain $\Omega \subset \mathbb{R}^d$ is an open, bounded and simply connected set 
with boundary $\Sigma$ for which there exists a parametrization $\{\Phi_i, X_i, Y_i\}_{i\in I}$ with a finite index set $I$ satisfying 
the following conditions:
\begin{itemize}\skipex
	\item [\rm (a)] 
	$X_i \subset \dR^{d-1}$ 
	and $Y_i \subset \dR^d$ 
	are open sets and  
	$\Phi_i\colon X_i \rightarrow Y_i$ 
	is a $C^2$-mapping for all $i \in I$;
	\item[\rm (b)]
	 $\rank D \Phi_i(u) = d - 1$ 
	 for all $u \in X_i$ and $i\in I$;
    \item[\rm (c)] 
    $\Phi_i(X_i) = Y_i \cap \Sigma$ 
    and $\Phi_i\colon X_i \rightarrow Y_i \cap \Sigma$ 
    is a homeomorphism;
    \item[\rm (d)] $\Sigma \subset \bigcup_{i \in I} Y_i$;
\end{itemize}
see, e.g. in \cite[Chapter 3]{mc_lean} for a similar notion.
Since $\Sigma$ is compact it is no restriction to assume that the sets $Y_i$, $i\in I$, are bounded. 
In order to prove that $\Sigma$ is a hypersurface in the sense of Definition~\ref{definition_hypersurface}
choose a family $\{V_i\}_{i\in I}$
of open sets in $\dR^d$ such that 
$V_i \subset \ov{V_i}\subset Y_i$ and $\Sigma \subset\bigcup_{i\in I} V_i$.
Next, set $U_i := \Phi_i^{-1}(V_i\cap\Sigma)\subset X_i$ 
and $\varphi_i := \Phi_i|_{U_i}$. 
Then $U_i \subset \overline{U_i} \subset X_i$ and $\{\varphi_i, U_i, V_i\}_{i\in I}$  satisfies the conditions (a)-(d) above.
In addition, each $\varphi_i$ is uniformly Lipschitz continuous on $U_i$,
as $\overline{U_i} \subset X_i$.
Therefore, $\{\varphi_i, U_i, V_i\}_{i\in I}$ is a parametrization of $\Sigma$
with the properties (a)-(e) in Definition \ref{definition_hypersurface} and hence $\Sigma$ is a $C^2$-hypersurface. 

Moreover, as $\overline{U_i} \subset X_i$ there is a constant $C > 0$ such that the corresponding 
first fundamental form in~\eqref{Gi} satisfies
\begin{equation} \label{first_fundamental_form_domain}
  \det G_i(u) \geq C > 0
\end{equation}
for any $u \in U_i$ and all $i \in I$.
Since the outward unit normal vector field $\nu$ is continuous, it is clear that the hypersurface $\Sigma$ 
is orientable.

Recall that the eigenvalues of the 
Weingarten map depend continuously on $x_{\Sigma}\in\Sigma$ and, since $\Sigma$ is compact, these eigenvalues are bounded. 
Hence, item (b) of Hypothesis~\ref{hypothesis_hypersurface} is satisfied. 
It remains to show that (a) and (c)
of Hypothesis~\ref{hypothesis_hypersurface} hold. 
The following proposition concerns condition (a).

\begin{prop} \label{proposition_iota_injective}
Let $\Omega\subset\dR^d$ be a bounded and simply connected $C^2$-domain with boundary $\Sigma$. Then there exists $\beta>0$ such that the mapping 
$\iota_\Sigma:\Sigma\times\dR\rightarrow\dR^d$ in \eqref{iota} is injective on $\Sigma \times (-\beta, \beta)$. 
\end{prop}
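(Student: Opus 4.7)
The plan is to combine a local injectivity result, obtained from the inverse function theorem applied to $\iota_{\varphi_i}$, with a compactness argument to upgrade local injectivity to uniform global injectivity on $\Sigma \times (-\beta,\beta)$.

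First I would establish local injectivity at every point $x_0 \in \Sigma$. Choose $i \in I$ and $u_0 \in U_i$ with $\varphi_i(u_0) = x_0$. From \eqref{DetDiota} one has
\begin{equation*}
\big|\det D\iota_{\varphi_i}(u_0,0)\big| = \sqrt{\det G_i(u_0)} > 0,
\end{equation*}
where strict positivity follows from \eqref{first_fundamental_form_domain} (equivalently, from the rank condition in Definition~\ref{definition_hypersurface}~(b)). By the inverse function theorem, there exist an open neighborhood $\widetilde U_{u_0} \subset U_i$ of $u_0$ and $\delta_{x_0} > 0$ such that $\iota_{\varphi_i}$ is a diffeomorphism from $\widetilde U_{u_0} \times (-\delta_{x_0},\delta_{x_0})$ onto its image. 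Setting $W_{x_0} := \varphi_i(\widetilde U_{u_0})$, which is open in $\Sigma$, this means that $\iota_\Sigma$ is injective on $W_{x_0} \times (-\delta_{x_0},\delta_{x_0})$.

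Next I would argue by contradiction and use compactness of $\Sigma$. Suppose no such $\beta>0$ exists. Then for every $n \in \mathbb N$ there exist pairs $(x_n,t_n) \neq (y_n,s_n)$ in $\Sigma \times (-1/n, 1/n)$ with $\iota_\Sigma(x_n,t_n) = \iota_\Sigma(y_n,s_n)$. Since $\Sigma$ is compact and $t_n, s_n \to 0$, I can pass to subsequences such that $x_n \to x^*$ and $y_n \to y^*$ in $\Sigma$. Taking the limit in the identity $x_n + t_n \nu(x_n) = y_n + s_n \nu(y_n)$ yields $x^* = y^*$. Now choose the local neighborhood $W_{x^*} \times (-\delta_{x^*},\delta_{x^*})$ supplied by the first step. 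For all sufficiently large $n$, both $x_n$ and $y_n$ lie in $W_{x^*}$ and $t_n, s_n \in (-\delta_{x^*},\delta_{x^*})$. This contradicts the local injectivity of $\iota_\Sigma$ on $W_{x^*} \times (-\delta_{x^*},\delta_{x^*})$, proving the existence of the desired $\beta > 0$.

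The main obstacle is the compactness/contradiction step: one must carefully ensure that both sequences $(x_n)$ and $(y_n)$ can be pushed into a single local injectivity chart. This is where compactness of $\Sigma$ is essential; without it (as in the unbounded hypersurface case allowed by Definition~\ref{definition_hypersurface}), one would need to supplement the argument with a quantitative lower bound on the determinant and on the Lipschitz constants of $\varphi_i$, which is precisely why condition~(c) in Hypothesis~\ref{hypothesis_hypersurface} is imposed in the general setting. In the present bounded situation, finiteness of the parametrization index set $I$ together with compactness of $\Sigma$ makes the extraction of limits straightforward and yields the uniform $\beta$.
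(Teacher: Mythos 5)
Your proof is correct and takes essentially the same route as the paper's: both argue by contradiction, use compactness of $\Sigma$ to extract converging subsequences with a common limit, pass to a single chart, and invoke the inverse function theorem (the Jacobian determinant being nonzero near $t=0$) to obtain a contradiction with local injectivity. Your write-up is in fact slightly more careful than the paper's in isolating the local injectivity step before deriving the contradiction, but the substance is identical.
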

\begin{proof}
Suppose that the claim is false. 
Then for all $n\in\dN$ there exist 
$x_{\Sigma, n},y_{\Sigma, n} \in\Sigma$ 
and $s_n,t_n\in \big(-\frac{1}{n},\frac{1}{n}\big)$ such that
$(x_{\Sigma, n},t_n)\ne (y_{\Sigma, n},s_n)$ and 
\begin{equation}
\label{eq:xnynanbn}
x_{\Sigma, n} + t_n\nu(x_{\Sigma, n}) = \iota_\Sigma(x_{\Sigma, n}, t_n) 
= \iota_\Sigma(y_{\Sigma, n}, s_n) = y_{\Sigma, n} + s_n\nu(y_{\Sigma, n}).
\end{equation}
Since $\Sigma$ is compact we can assume that the sequences $(x_{\Sigma, n})_n$ and $(y_{\Sigma, n})_n$ converge to $x_\Sigma$ and $y_\Sigma$, respectively.
Then, equation \eqref{eq:xnynanbn} implies $x_\Sigma = y_\Sigma$.
Let $\{\varphi_i, U_i, V_i\}_{i\in I}$ be the parametrization of $\Sigma$ constructed above
and let $\iota_{\varphi_i}:U_i\times \dR\rightarrow\dR^d$
be as in \eqref{iotavarphi}. 
Since $x_{\Sigma, n} \rightarrow x_\Sigma$ and $y_{\Sigma, n}\rightarrow y_\Sigma = x_\Sigma$, 
there exists $N\in\dN$ and $i\in I$ such that $x_{\Sigma, n},y_{\Sigma, n} \in V_i$ 
for all $n\ge N$.
Hence, $u_n := \varphi_i^{-1}(x_{\Sigma, n})$ and 
$v_n := \varphi_i^{-1}(y_{\Sigma, n})$, $n \ge N$, satisfy
\[
\iota_{\varphi_i}(u_n,t_n) 
= 
\varphi_i(u_n) + t_n\nu(\varphi_i(u_n))
= 
\varphi_i(v_n) + s_n\nu(\varphi_i(v_n)) 
=
\iota_{\varphi_i}(v_n,s_n)
\]
and it follows that $\iota_{\varphi_i}$ is not injective on $U_i\times (-\beta,\beta)$ 
for any $\beta > 0$. 
On the other hand, if $G_i$ and $L_i$ denote the matrices representing the first fundamental form and the Weingarten map, then
\begin{equation*}
|\det D\iota_{\varphi_i}(u,t)| = \det(1 - tL_i(u))\sqrt{\det G_i(u)} \ge C > 0
\end{equation*}
holds for some $C >0$, all $u\in U_i$ and all $t$ sufficiently small; cf. \eqref{DetDiota} and \eqref{first_fundamental_form_domain}.
Now the inverse function theorem implies that $\iota_{\varphi_i}$
is invertible; a contradiction. 
\end{proof}

Finally, we prove that condition {\rm (c)} of 
Hypothesis~\ref{hypothesis_hypersurface} is satisfied for the boundary of a compact and simply connected $C^2$-smooth domain.

\begin{prop}
Let $\Omega\subset\dR^d$ be a bounded and simply connected $C^2$-domain with boundary $\Sigma$,
let $\{\varphi_i, U_i, V_i\}_{i\in I}$ be the parametrization of $\Sigma$ as above and let
$\iota_{\varphi_i}$ be as in \eqref{iotavarphi}. Then
    there exist $\beta>0$ and a constant $c > 0$  
    such that for all $u, v \in U_i$, $s, t \in (-\beta, \beta)$ and all $i\in I$ it holds
    \[
     \big|\iota_{\varphi_i}(u,t) - \iota_{\varphi_i}(v,s)\big|^2 
      \ge c^2\left( |u - v|^2 + |s -  t|^2 \right).
    \]
\end{prop}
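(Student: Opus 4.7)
The plan is to establish the estimate for each $i \in I$ separately; since $I$ is finite, taking the minimum of the resulting constants $\beta_i$, $c_i$ over $i$ will yield the claim. Fix $i$ and abbreviate $\iota := \iota_{\varphi_i}$, $\varphi := \varphi_i$, $K := \overline{U_i}$. Note that $K$ is compact (since $U_i$ is bounded and $\overline{U_i} \subset X_i$), and $\varphi$ extends to a homeomorphism of $K$ onto $\varphi(K)$ via the ambient parametrization $\Phi_i$.

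I will first handle the ``near-diagonal'' case $|u - v| < r_0$. At any $(u_0, 0) \in K \times \{0\}$, the Jacobian $D\iota(u_0, 0) = \bigl(D\varphi(u_0) \mid \nu(\varphi(u_0))\bigr)$ has full rank, because $\nu(\varphi(u_0))$ is perpendicular to the hyperplane spanned by the columns of $D\varphi(u_0)$. By the inverse function theorem, $\iota$ is a local $C^1$-diffeomorphism, hence locally bi-Lipschitz, near each such $(u_0, 0)$. Continuity of $D\iota$ on $K \times [-1,1]$ together with a Lebesgue-number argument applied to a finite subcover of $K \times \{0\}$ by such neighborhoods yields uniform constants $r_0 > 0$, $\beta_0 > 0$, $c_1 > 0$ such that
\[
|\iota(u,t) - \iota(v,s)|^2 \geq c_1^2 \bigl( |u-v|^2 + |s-t|^2 \bigr)
\]
for all $u, v \in K$ with $|u-v| < r_0$ and all $s, t \in (-\beta_0, \beta_0)$.

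Next I treat the ``far-from-diagonal'' case $|u - v| \geq r_0$. Since $\varphi\colon K \to \varphi(K)$ is a homeomorphism between compact sets, its inverse is uniformly continuous, so there exists $\delta_1 > 0$ with $|\varphi(u) - \varphi(v)| \geq \delta_1$ for all $u, v \in K$ with $|u - v| \geq r_0$. Using $|\nu| \equiv 1$ and the reverse triangle inequality,
\[
|\iota(u,t) - \iota(v,s)| \geq |\varphi(u) - \varphi(v)| - |t| - |s| \geq \delta_1 - 2\beta,
\]
so the choice $\beta := \min\{\beta_0, \delta_1/4\}$ gives $|\iota(u,t) - \iota(v,s)| \geq \delta_1/2$. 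Because $|u-v|^2 + |s-t|^2 \leq \operatorname{diam}(K)^2 + 4\beta^2 =: M^2$, setting $c := \min\{c_1, \delta_1/(2M)\}$ (and then taking the minimum over the finite set $I$) yields the desired estimate.

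The main obstacle is the uniformity in the first step: the inverse function theorem is inherently local, so a compactness argument is needed to produce a single radius $r_0$ and bi-Lipschitz constant $c_1$ valid for every base point $u_0 \in K$; this is precisely where the hypothesis $\overline{U_i} \subset X_i$, built into the parametrization constructed in the appendix, is used. The remaining ingredients---uniform continuity of $\varphi^{-1}$ on the compact set $\varphi(K)$, the triangle inequality, and the finiteness of $I$---are routine.
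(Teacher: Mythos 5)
Your proof is correct, but it takes a genuinely different route from the paper's. You split by domain distance $|u-v| \gtrless r_0$ and rely on the local inverse function theorem plus a compactness/Lebesgue-number argument to obtain uniform bi-Lipschitz constants near the diagonal, then uniform continuity of $\varphi_i^{-1}$ plus the triangle inequality off the diagonal. The paper instead splits by image distance $|\iota_{\varphi_i}(u,t) - \iota_{\varphi_i}(v,s)| \gtrless \beta_1$, works with the ambient map $\iota_{\Phi_i}$ on a carefully chosen enlargement $\Omega_2$ of $\iota_{\varphi_i}(U_i \times (-\beta,\beta))$, and obtains a uniform bound on $\|D\iota_{\Phi_i}^{-1}\|$ by combining the lower bound for $\det D\iota_{\Phi_i}$ (via $\det(1-tL_i)$ and $\det G_i$) with the adjugate formula; the near case then follows from a line integral over $[x,y]\subset B(x,\beta_1)$, and the far case from boundedness of $\iota_{\varphi_i}^{-1}$. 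Your approach is more abstract and avoids the explicit Jacobian computation, while the paper's yields a more explicit constant and ties directly into the geometric quantities ($G_i$, $L_i$) already in play. One place you should sharpen: the near-diagonal step, where the Lebesgue-number argument must be set up so that both $(u,t)$ and $(v,s)$ land in a single neighborhood on which $\iota_{\Phi_i}$ is a diffeomorphism onto a convex set (so that the lower bi-Lipschitz bound follows from a line-integral estimate for $\iota_{\Phi_i}^{-1}$); this is where the hypothesis $\overline{U_i}\subset X_i$ and the $C^2$-regularity of $\Phi_i$ enter, and the current wording leaves that quantitative uniformity to the reader. It is a real step, not just bookkeeping, but it is standard and your sketch points in the right direction.
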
 

\begin{proof}
Let $\{ \Phi_i, X_i, Y_i \}_{i \in I}$ be a parametrization of $\Sigma$ as 
in the beginning of this appendix, fix $i \in I$ and 
choose $\beta > 0$ and a constant $D > 0$ such that
\begin{equation}
\label{eq:uniformpositive}
\det(1 - tL_i(u)) \ge D > 0\quad\text{for all}\quad (u,t)\in X_i \times (-2\beta,2\beta);
\end{equation} 
here $L_i(u)$ is the matrix representing the Weingarten 
map associated to the mapping $\Phi_i$.  
As in \eqref{iotavarphi} define
\begin{equation*}
\iota_{\Phi_i}\colon X_i \times \dR  \rightarrow \dR^d, 
    \qquad \iota_{\Phi_i}(u,t) :=  \Phi_i(u) + t \nu(\Phi_i(u)),
\end{equation*}
and consider the open sets 
\[
\Omega_1 := \iota_{\varphi_i}(U_i\times (-\beta,\beta))
\quad\text{and}\quad 
\Omega_3 := \iota_{\Phi_i}(X_i\times (-2\beta,2\beta))
\]
in $\dR^d$.
Since $\Omega_1\subset\ov{\Omega_1}\subset\Omega_3$,
there exists $\beta_1 > 0$ such that 
$\ov{B(x, 2 \beta_1)}\subset\Omega_3$ 
for any $x \in \Omega_1$. Eventually, we need the open set
\begin{equation*}
  \Omega_2 := \bigcup_{x \in \Omega_1} B(x, \beta_1).
\end{equation*}
Note that $\Omega_1 \subset \Omega_2 \subset \overline{\Omega_2} \subset \Omega_3$.

Let $u, v\in U_i$ and $s,t\in(-\beta,\beta)$ be fixed. 
Set $x := \iota_{\varphi_i}(u,t)=\iota_{\Phi_i}(u,t)$ and $y  := \iota_{\varphi_i}(v,s)=\iota_{\Phi_i}(v,s)$.
We distinguish two cases: 
$|x - y| < \beta_1$ and $|x - y| \geq \beta_1$. 
In the first case, if $|x - y| < \beta_1$ then $y$ 
is contained in the convex set $B(x, \beta_1) \subset \Omega_2$. 
It follows from
\begin{equation*}
\big|\det \big((D \iota_{\Phi_i})(u, t)\big)\big|
=  \big| \det(1 - t L_i(u)) \big| \sqrt{\det G_i(s)}
\end{equation*}
(cf. \eqref{DetDiota}), \eqref{eq:uniformpositive} and \eqref{first_fundamental_form_domain}, which is also true
for $u\in \Omega_2$ with a possibly smaller constant $C$,
that $|\det D\iota_{\Phi_i}(u,t)| \ge D' >0$
for all $(u,t)\in \Phi_i^{-1}(\Omega_2)$.
Hence, there exists a constant $C_1 >0$ such that
\[
\big\|(D\iota_{\Phi_i}^{-1})(z)\big\| = \left\| \big(D \iota_{\Phi_i} 
(\iota_{\Phi_i}^{-1}(z)) \big)^{-1} \right\|=
\left\| \frac{1}{\det D \iota_{\Phi_i} 
(\iota_{\Phi_i}^{-1}(z)) }\,\text{adj}\bigl[ D \iota_{\Phi_i} 
(\iota_{\Phi_i}^{-1}(z))\bigr] \right\|
\le C_1 
\]
for all $z \in \Omega_2$; here $\text{adj}\,[\cdot]$ denotes the adjugate matrix.
Hence, we obtain
\begin{equation*}
  \begin{split}
    \bigl( |u - v|^2 + |s - t|^2 \bigr)^{1/2} &= \big|\iota_{\Phi_i}^{-1}(y) - \iota_{\Phi_i}^{-1}(x)\big| = 
    \left| \int_0^1 \frac{\mathrm{d}}{\mathrm{d} \xi} \iota_{\Phi_i}^{-1}(x + \xi (y - x)) \mathrm{d} \xi \right| \\
    &\leq \int_0^1 \big|D \iota_{\Phi_i}^{-1}(x + \xi (y - x))\! \cdot \!(y - x)\big| \mathrm{d} \xi 
    \leq \max\left\{ \big\| D \iota_{\Phi_i}^{-1}(z) \big\|\!:\! z \in B(x, \beta_1) \right\} \!\cdot\! | x - y | \\
    &\leq C_1 | \iota_{\varphi_i}(u, t) - \iota_{\varphi_i}(v, s) |,
  \end{split}
\end{equation*}
which is the claimed result in the case $|x - y| < \beta_1$.

In the remaining case $|x - y| \geq \beta_1$ we set
$C_2 := \max\{|\iota_{\varphi_i}^{-1}(z)|\colon z\in\Omega_1\}$.
Since $\overline{\Omega}_1 \subset \Omega_3$ we conclude $C_2 < \infty$ and
\[
|u- v|^2 + |s-t|^2
= 
\big|\iota_{\varphi_i}^{-1}(x) - \iota_{\varphi_i}^{-1}(y)\big|^2 
\le
\frac{4C_2^2}{|x-y|^2}|x-y|^2
\le 
\frac{4C_2^2}{\beta_1^2}\big|\iota_{\varphi_i}(u,t) - \iota_{\varphi_i}(v,s)\big|^2. 
\]
Setting finally
\[
C^{(i)}:= \min\bigg\{\frac{1}{C_1},\frac{\beta_1}{2C_2}\bigg\}
\]
and $c := \min \big\{ C^{(i)}: i \in I \big\}$, the result of
this proposition follows.
\end{proof}

\end{appendix}



\end{document}